\documentclass[12pt]{amsart}

\usepackage{amsfonts}
\usepackage{amsmath}
\usepackage{amssymb}
\usepackage[margin=1in]{geometry}
\usepackage{amsthm}    
\usepackage{xcolor}

\usepackage{mathtools}							
\usepackage{commath}							
							 
\usepackage{enumitem}  							
											
\usepackage[overload]{textcase}					
\usepackage{array}								
\usepackage{booktabs}							

\usepackage[english]{babel}						

											
\usepackage[hidelinks]{hyperref}

\usepackage{bookmark}
\setcounter{tocdepth}{3} 

\usepackage{color,soul}
\soulregister\cite7

\usepackage{bigints}

\usepackage{graphicx}

\usepackage{relsize}


\makeatletter
\@namedef{subjclassname@2020}{\textup{2020} Mathematics Subject Classification}
\makeatother

\newtheorem{theorem}{Theorem}[section]

\newtheorem{lemma}[theorem]{Lemma}
\newtheorem{proposition}[theorem]{Proposition}

\theoremstyle{definition}
\newtheorem{definition}[theorem]{Definition}
\newtheorem{remark}[theorem]{Remark}

\newtheorem{open problem}[theorem]{Open Problem}

\numberwithin{equation}{section} 


\newcommand{\N}{\mathbb{N}}
\newcommand{\Z}{\mathbb{Z}}

\newcommand{\R}{\mathbb{R}}

\renewcommand{\Pr}{\operatorname{P}}

\newcommand{\defeq}{\coloneqq}
\newcommand{\eqdef}{\eqqcolon}


\begin{document}

\title[J\MakeLowercase{oint distribution of the sum and maximum of independent random variables}]{R\MakeLowercase{ecurrence relations for the joint distribution of the sum and maximum of independent random variables}}

\author[C\MakeLowercase{hristos}~N.~E\MakeLowercase{frem}]{C\MakeLowercase{hristos}~N.~E\MakeLowercase{frem}}

\address{Department of Electrical and Computer Engineering, \newline \indent University of Cyprus, Nicosia, Cyprus}
\email{efrem\,\{dot\}\,christos\,\{at\}\,ucy\,\{dot\}\,ac\,\{dot\}\,cy}

\begin{abstract}
In this paper, the joint distribution of the sum and maximum of independent, not necessarily identically distributed, nonnegative random variables is studied for two cases: i) continuous and ii) discrete random variables. First, a recursive formula of the joint cumulative distribution function (CDF) is derived in both cases. Then, recurrence relations of the joint probability density function (PDF) and the joint probability mass function (PMF) are given in the former and the latter case, respectively. Interestingly, there is a fundamental difference between the joint PDF and PMF. The proofs are simple and mainly based on the following tools from calculus and discrete mathematics: differentiation under the integral sign (also known as Leibniz's integral rule), the law of total probability, and mathematical induction. In addition, this work generalizes previous results in the literature, and finally presents several extensions of the methodology.   
\end{abstract}

\subjclass[2020]{Primary 60E05, 60G50, 60G70; Secondary 26B05, 65Q30}
\keywords{Probability distributions, sum and maximum of independent random variables, recurrence relations, differentiation under the integral sign, Dirac delta function, peak-to-average ratio}

\maketitle

\section{Introduction}

In the past decades there has been a growing interest in the joint distribution of the sum and maximum of independent random variables, mainly due to its practical importance in several fields (e.g., engineering, geosciences and economics) as well as its own theoretical significance. Nevertheless, general mathematical approaches and algorithmic methods to deal with arbitrary distributions of random variables are still missing from the literature.

\subsection{Related Work}

Chow and Teugels \cite{Chow-Teugels},  Anderson and Turkman \cite{Anderson-Turkman1991,Anderson-Turkman1993}, Hsing \cite{Hsing_a,Hsing_b}, Ho and Hsing \cite{Ho-Hsing}, McCormick and Qi \cite{McCormick-Qi}, and Peng and Nadarajah \cite{Peng-Nadarajah} studied the limiting joint distribution and asymptotic independence of the sum and maximum of identically distributed random variables. In addition, the exact (non-asymptotic) joint distribution of the sum and maximum of independent random variables was investigated by Qeadan, Kozubowski and Panorska \cite{Qeadan-Kozubowski-Panorska}, and by Arendarczyk, Kozubowski and Panorska \cite{Arendarczyk-Kozubowski-Panorska_a} for identically and non-identically distributed exponential variables, respectively. A key recursive formula for independent and identically distributed (i.i.d.)~nonnegative continuous random variables (with an arbitrary PDF) was derived by Qeadan, Kozubowski and Panorska \cite[Eq.~(2.18)]{Qeadan-Kozubowski-Panorska}. Another recursive formula was given in \cite[Eq.~(23)]{Arendarczyk-Kozubowski-Panorska_a} by appropriately partitioning the domain of the joint CDF. Arendarczyk, Kozubowski and Panorska also investigated the case of Pareto-dependent and identically distributed random variables \cite{Arendarczyk-Kozubowski-Panorska_b}. Moreover, recurrence relations were used for the distribution of the sum of independent continuous/discrete random variables \cite{Moschopoulos, Woodward-Palmer, Butler-Stephens} as well as in order statistics \cite{Balakrishnan, Balakrishnan-Bendre-Malik, David-Nagaraja}. It is worth mentioning that the distribution of the sum of independent (non-identical) uniform random variables was originally proved by Olds \cite{Olds} using induction, and later by Lazar and Almutairi~\cite{Lazar-Almutairi} using the properties of Dirac's delta function. Finally, the sum of gamma and exponential random variables  were studied respectively in \cite{Alouini-Abdi-Kaveh} and \cite{Khuong-Kong}, with applications in wireless communication systems.

\subsection{Main Contributions \& Outline}

Despite extensive work in the past, the joint distribution of the sum and maximum of independent, not necessarily identically distributed, random variables remains almost unexplored (except for the special case of exponential variables \cite{Arendarczyk-Kozubowski-Panorska_a}). The purpose of this work is to provide a new approach to this problem in its full generality, not only for continuous but also for discrete random variables with \emph{arbitrary} distributions. In particular, we derive \emph{recursive formulas} for the joint distributions that are of independent theoretical interest and can also be used for numerical computations, especially when it is difficult to obtain explicit (closed-form) expressions. Moreover, such calculations may be used to check whether available data fit these probability distributions well (also known as goodness-of-fit test). In general, recurrence relations are useful for designing efficient methods, such as iterative algorithms, which have great practical value. Finally, note that the main focus of the paper is on the application of mathematical analysis and discrete mathematics to probability theory, rather than on specific real-world applications (which can be found in other works, e.g., \cite{Qeadan-Kozubowski-Panorska, Arendarczyk-Kozubowski-Panorska_a, Arendarczyk-Kozubowski-Panorska_b}, and the references therein).  

The remainder of this paper is organized as follows. Section~\ref{section:Continuous Random Variables} deals with the joint CDF and PDF in the continuous case, and presents a potential application of peak-to-average ratio in communication systems. Subsequently, Section~\ref{section:Discrete Random Variables} studies the joint CDF and PMF in the discrete case, and provides a similar result for the peak-to-average ratio. Section~\ref{section:Extensions and Generalizations} presents various extensions and generalizations of the proposed approach for random variables that are not necessarily nonnegative, have discontinuous PDFs, or are of a mixed (continuous and discrete) nature. Furthermore, Appendices~\ref{appendix:Partial Differentiation Under the Integral Sign}~and~\ref{appendix:Dirac Delta Function} contain information about partial differentiation under the integral sign and the Dirac delta function, respectively. All proofs are simple and elegant using only elementary techniques, such as the law of total probability, Leibniz's integral rule, and mathematical induction. Last but not least, this paper includes existing results as special cases.

\subsection{Notation} 

By $a \defeq b$, or $b \eqdef a$, we mean that ``$a$ is by definition equal to $b$''. $\R$ is the set of real numbers and $\Z$ is the set of integers. $\R_+ \defeq \{x \in \R : x \geq 0\}$ represents the set of nonnegative real numbers. The set of natural numbers excluding/including zero is denoted by $\N \defeq \{1,2,\dots\}$ and $\N_0 \defeq \{0,1,\dots\}$, respectively. $\lfloor x \rfloor \defeq \max \{k \in \Z : k \leq x \}$ denotes the floor of $x$ (i.e., the greatest integer less than or equal to $x$), while $\lceil x \rceil \defeq \min \{k \in \Z : k \geq x \}$ is the ceiling of $x$ (i.e., the least integer greater than or equal to $x$). The symbols $\lor$ and $\land$ stand for logical disjunction (OR) and logical conjunction (AND), respectively. 

In addition, throughout the paper we consider only real-valued functions. $\delta(x)$ stands for the Dirac delta function/distribution (for more details see Appendix~\ref{appendix:Dirac Delta Function}), while $H(x)$ is the Heaviside step function defined by 
\[
H(x) \defeq \left\{  
\begin{array}{ll}
      1, & \text{if}\ x \geq 0 , \\
      0, & \text{if}\ x < 0   . \\
\end{array} 
\right. 
\] 
Note that the derivative of the Heaviside step function is equal to the Dirac delta function, i.e., $\od{}{x} H(x) = \delta(x)$. 
We also adopt the Iverson bracket notation, that is, 
\[
[Q] \defeq \left\{  
\begin{array}{ll}
      1, &  \text{if statement $Q$ is true} , \\
      0, & \text{if statement $Q$ is false} . \\
\end{array} 
\right. 
\] 
Using this notation, the Heaviside step function can be expressed as $H(x) = [x \geq 0]$. 

Let $(\mathcal{X},\Sigma,\mu)$ be a measure space, where $\mathcal{X} \subseteq \R^n$, $\Sigma$ is a $\sigma$-algebra over $\mathcal{X}$, and \linebreak $\mu : \Sigma \to \R \cup \{-\infty,+\infty\}$ is a measure function. Then, $\displaystyle{f(\mathbf{x}) \mathop{=}^{\text{a.e.}} g(\mathbf{x})}$ means that $f(\mathbf{x}) = g(\mathbf{x})$ almost everywhere on $\mathcal{X}$ (or, alternatively, for almost all/every $x \in \mathcal{X}$), i.e., there exists a (measurable) set $\mathcal{S} \in \Sigma$ with $\mu(\mathcal{S}) = 0$ such that $f(\mathbf{x}) = g(\mathbf{x})$ for all $\mathbf{x} \in \mathcal{X} \setminus \mathcal{S}$. For the purposes of this paper, we use the Lebesgue measure (in the geometric sense), that is, length in $\R$, area in $\R^2$, and volume in $\R^3$.  

Furthermore, we make the following conventions: i) $\sum_{k=l}^{u} {a_k} = 0$ and $\prod_{k=l}^{u} {a_k} = 1$, whenever $u<l$, and ii) when a function $f(\mathbf{x})$ is defined on a set/domain $\mathcal{D} \subseteq \R^n$, we implicitly assume that $f(\mathbf{x}) = 0$ elsewhere (i.e., for all $\mathbf{x} \notin \mathcal{D}$); in fact, we can equivalently define an extended function $\widetilde{f}(\mathbf{x})$ on $\R^n$, that is, $\widetilde{f}(\mathbf{x}) \defeq f(\mathbf{x}) [\mathbf{x} \in \mathcal{D}]$.

\section{Continuous Random Variables}
\label{section:Continuous Random Variables}

We consider a sequence of \emph{independent, not necessarily identically distributed,} nonnegative (absolutely) continuous random variables $X_i \geq 0$, $i \in \N$, with PDF $f_i(x)$ and CDF $F_i(x) \defeq \Pr (X_i \leq x) = \int_0^x {f_i(x') \dif x'}$, $\forall x \in \R_+$. For every $i \in \N$, we assume that the PDF $f_i(x)$ is \emph{continuous} on $\R_+$, therefore $f_i(x) = \od{}{x} F_i(x)$. Observe that $f_i(x)$ is also \emph{bounded} on $\R_+$, since $f_i(x) \geq 0$ and $\int_0^{\infty} {f_i(x) \dif x} = 1$. Hence, $f_i(x)$ is \emph{bounded and piecewise continuous on~$\R$}, with possible discontinuity at $x=0$, because $f_i(x) = 0$ for all $x<0$. Now, let us define the following random variables for all $n \in \N$  
\begin{equation} \label{equation:sum and maximum} 
Y_n \defeq \sum_{i=1}^n {X_i} = X_1 + \cdots + X_n , \qquad  Z_n \defeq \max_{1 \leq i \leq n} {X_i}  = \max (X_1,\dots,X_n) .
\end{equation}
In general, $Y_n$ and $Z_n$ are dependent random variables. For the sake of convenience, we also define the following sets    
\begin{equation}
\mathcal{D}_n \defeq \{ (y,z) \in \R_+^2 : z \leq y \leq n z \} , \quad \forall n \in \N .
\end{equation}
Subsequently, we would like to compute the CDF and PDF of the random pair $(Y_n,Z_n)$, i.e., $G_n (y,z) \defeq \Pr (Y_n \leq y,Z_n \leq z) = \int_0^z {\int_0^y  g_n(y',z')  \dif y' \dif z'}$, $\forall (y,z) \in \R_+^2$, and $g_n(y,z) = \md{}{2}{y}{}{z}{} G_n(y,z)$, $\forall (y,z) \in \mathcal{D}_n$. Note that $0 \leq Z_n \leq Y_n \leq n Z_n$, therefore $g_n(y,z) = 0$ for all $(y,z) \notin \mathcal{D}_n$.

\subsection{Recurrence Relation for the Joint CDF}

\begin{theorem} \label{theorem:Recurrence relation and continuity of joint CDF}
The joint CDF of $Y_n$ and $Z_n$, defined by \eqref{equation:sum and maximum}, is given by the following recurrence relation 
\begin{equation} \label{equation:recurrence relation of joint CDF}
\begin{split}
G_n (y,z) & = \int_0^z {G_{n-1}(y - x,z) f_n(x) \dif x}  = \int_0^{\min(y,z)} {G_{n-1}(y - x,z) f_n(x) \dif x} , \\
& \:\, \quad \forall n \in \N \setminus \{1\} , \; \forall (y,z) \in \R_+^2 ,
\end{split}
\end{equation} 
with initial condition
\begin{equation} \label{equation:initial condition for joint CDF}
G_1 (y,z) = F_1 (\min(y,z)) ,  \quad \forall (y,z) \in \R_+^2 .
\end{equation}
In addition, $G_n (y,z)$ is continuous on $\R_+^2$ for all $n \in \N$. 
\end{theorem}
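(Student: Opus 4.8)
The plan is to dispatch the initial condition and the recurrence by direct probabilistic arguments, and to reserve induction for the continuity claim, which I expect to be the only nontrivial part. For the initial condition \eqref{equation:initial condition for joint CDF}, I would simply observe that $Y_1 = Z_1 = X_1$, whence $G_1(y,z) = \Pr(X_1 \le y,\, X_1 \le z) = \Pr(X_1 \le \min(y,z)) = F_1(\min(y,z))$.

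For the recurrence \eqref{equation:recurrence relation of joint CDF}, the key is the pair of identities $Y_n = Y_{n-1} + X_n$ and $Z_n = \max(Z_{n-1},X_n)$, which let me rewrite the event as $\{Y_n \le y,\, Z_n \le z\} = \{Y_{n-1} \le y - X_n\} \cap \{Z_{n-1} \le z\} \cap \{X_n \le z\}$. Since $X_n$ is independent of $(X_1,\dots,X_{n-1})$, and therefore of the pair $(Y_{n-1},Z_{n-1})$, I would apply the law of total probability by conditioning on $X_n = x$ and integrating against $f_n$; the constraint $X_n \le z$ confines the integration to $x \in [0,z]$, which yields the first equality. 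The second equality is then immediate from $Y_{n-1} \ge 0$, since this forces $G_{n-1}(y-x,z) = 0$ for every $x > y$, so the upper limit may be replaced by $\min(y,z)$.

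The substance of the proof, and the step I expect to be the main obstacle, is continuity, which I would establish by induction on $n$. The base case holds because $G_1 = F_1 \circ \min$ is a composition of continuous maps ($F_1$ being continuous as $X_1$ is absolutely continuous). For the inductive step I would first upgrade the hypothesis: assuming $G_{n-1}$ is continuous on $\R_+^2$, its extension by zero to all of $\R^2$ is again continuous, because $G_{n-1}$ vanishes on the axes---indeed $G_{n-1}(0,z) = G_{n-1}(y,0) = 0$ since the events $\{Y_{n-1}=0\}$ and $\{Z_{n-1}=0\}$ both coincide with the null event $\{X_1=\cdots=X_{n-1}=0\}$. This is the delicate point: in the recurrence the argument $y-x$ runs through negative values, and I need the integrand to be free of jumps there, which is exactly what vanishing on the axes guarantees.

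With the continuous extension in hand, I would write $G_n(y,z) = \int_0^\infty G_{n-1}(y-x,z)\, f_n(x)\, [0 \le x \le z] \dif x$ and prove sequential continuity: for any $(y_k,z_k) \to (y_0,z_0)$ the integrand converges pointwise for almost every $x$ (the indicator can fail to converge only at the single point $x = z_0$), while $0 \le G_{n-1} \le 1$ bounds it by the integrable dominating function $f_n$ once the $z_k$ are bounded. Dominated convergence then delivers $G_n(y_k,z_k) \to G_n(y_0,z_0)$. The one subtlety to keep in view throughout is the $z$-dependence of the upper limit, which the indicator-plus-dominated-convergence packaging is designed precisely to absorb; an equivalent elementary route splits the difference $G_n(y,z) - G_n(y_0,z_0)$ into a tail piece controlled by the continuity of $F_n$ and a main piece controlled by the uniform continuity of $G_{n-1}$ on compact sets.
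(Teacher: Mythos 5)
Your proposal is correct. The derivation of the initial condition and of the recurrence is identical to the paper's: the same decomposition $Y_n=Y_{n-1}+X_n$, $Z_n=\max(Z_{n-1},X_n)$, the law of total probability conditioning on $X_n=x$, independence of $X_n$ from $(Y_{n-1},Z_{n-1})$, and the observation that $G_{n-1}(y-x,z)=0$ for $x>y$. The only place you diverge is the inductive step for continuity. The paper works with the truncated integral $\int_0^{\min(y,z)}G_{k-1}(y-x,z)f_k(x)\dif x$, so that the argument $y-x$ never leaves $\R_+$, and then appeals to the joint continuity of $\min$ together with the continuity of an integral of a continuous integrand with respect to its parameters and its upper limit. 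You instead extend $G_{n-1}$ by zero to all of $\R^2$, justify that this extension is continuous because $G_{n-1}$ vanishes on the coordinate axes (both $\{Y_{n-1}=0\}$ and $\{Z_{n-1}=0\}$ being null events for absolutely continuous nonnegative summands), absorb the variable upper limit into an indicator, and conclude by dominated convergence with dominating function $f_n$. Both routes are valid; yours is slightly heavier machinery but is more robust --- it needs only measurability and the bound $0\le G_{n-1}\le 1$ rather than continuity of the integrand, which is precisely the kind of argument the paper later has to switch to in Section~4 when the densities are only piecewise continuous --- whereas the paper's version is more elementary and sidesteps the boundary issue you correctly flag as the delicate point.
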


\begin{proof}
Observe that $Y_n = Y_{n-1} + X_n$ and $Z_n = \max(Z_{n-1},X_n)$. Therefore, we can write 
\[
\begin{split}
G_n (y,z) & = \Pr (Y_{n-1} + X_n \leq y, \max(Z_{n-1},X_n) \leq z)   \\
& = \Pr (Y_{n-1} + X_n \leq y, Z_{n-1} \leq z, X_n \leq z)   \\
& \mathop{=}^{(a)} \int_0^\infty {\Pr (Y_{n-1} + X_n \leq y, Z_{n-1} \leq z, X_n \leq z \;|\; X_n = x) f_n(x) \dif x}   \\
& \mathop{=}^{(b)} \int_0^\infty {\Pr (Y_{n-1} + x \leq y, Z_{n-1} \leq z, x \leq z) f_n(x) \dif x}   \\
& = \int_0^z {\Pr (Y_{n-1} \leq y - x, Z_{n-1} \leq z) f_n(x) \dif x}   \\
& = \int_0^z {G_{n-1}(y - x,z) f_n(x) \dif x} = \int_0^{\min(y,z)} {G_{n-1}(y - x,z) f_n(x) \dif x}  ,  
\end{split}
\]
where equality $(a)$ is due to the law/formula of total probability, equality $(b)$ follows from the fact that $X_n$ is independent of the pair $(Y_{n-1},Z_{n-1})$, and the last equality is because $G_{n-1}(y - x,z) = 0$ when $x>y$. Regarding the initial condition,  
\[
G_1 (y,z) = \Pr (Y_1 \leq y, Z_1 \leq z) = \Pr (X_1 \leq y, X_1 \leq z) = \Pr (X_1 \leq \min(y,z)) = F_1 (\min(y,z)). 
\]

Now, it remains to show the continuity of $G_n(y,z)$ on $\R_+^2$ for all $n \in \N$. We will use mathematical induction on $n$. Basis step: For $n=1$, $G_1 (y,z) = F_1 (\min(y,z))$ is continuous on $\R_+^2$ since it is the composition of two continuous functions, $F_1(\cdot)$ on $\R_+$ and $\min(\cdot,\cdot)$ on $\R_+^2$. Inductive step: Suppose that $G_{k-1}(y,z)$ is continuous on $\R_+^2$, for some arbitrary $k \in \N \setminus \{1\}$ (inductive hypothesis). Then, according to \eqref{equation:recurrence relation of joint CDF}, $G_k (y,z) = \int_0^{\min(y,z)} {G_{k-1}(y - x,z) f_n(x) \dif x} $ is continuous on $\R_+^2$ because $G_{k-1}(y - x,z) f_n(x)$ is continuous for $0 \leq x \leq \min(y,z)$, as the product of two continuous functions, $\min(\cdot,\cdot)$ is continuous on $\R_+^2$ and the integral of a continuous function is continuous as well.   
\end{proof}

Interestingly, a similar recursive formula was given by Arendarczyk, Kozubowski and Panorska after dividing the domain of the joint CDF into several regions \cite[Eq.~(23)]{Arendarczyk-Kozubowski-Panorska_a}. Furthermore, we can derive an exact formula (a multiple integral) for the joint CDF. 

\begin{proposition} 
The joint CDF of $Y_n$ and $Z_n$ can be expressed as an $(n-1)$-dimensional integral, that is, 
\begin{equation} \label{equation:exact formula CDF}
\begin{split}
G_n (y,z) & = \bigintsss_{x_n=0}^z \cdots \bigintsss_{x_2=0}^z  G_1 \left( {y - \sum_{i=2}^n {x_i}},z \right) \prod_{i=2}^n {f_i(x_i) \dif x_i}  ,   \\
& \:\, \quad \forall n \in \N \setminus \{1\}  , \; \forall (y,z) \in \R_+^2 , 
\end{split}
\end{equation} 
where $G_1(y,z)$ is given by \eqref{equation:initial condition for joint CDF}.
\end{proposition}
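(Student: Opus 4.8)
The plan is to prove \eqref{equation:exact formula CDF} by mathematical induction on $n$, using the recurrence relation \eqref{equation:recurrence relation of joint CDF} from Theorem~\ref{theorem:Recurrence relation and continuity of joint CDF} as the engine. The formula is, after all, nothing more than the recurrence unrolled all the way down to the initial condition \eqref{equation:initial condition for joint CDF}, so the proof amounts to verifying that each application of the recurrence peels off exactly one integration variable and one factor of the product.

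For the base case $n=2$, I would simply observe that \eqref{equation:recurrence relation of joint CDF} gives $G_2(y,z) = \int_0^z {G_1(y - x_2, z) f_2(x_2) \dif x_2}$, which is precisely \eqref{equation:exact formula CDF} with the product $\prod_{i=2}^n f_i(x_i)$ reducing to the single factor $f_2(x_2)$ and the sum $\sum_{i=2}^n x_i$ reducing to $x_2$. For the inductive step, assuming \eqref{equation:exact formula CDF} holds for $n-1$, I would apply the recurrence once to write $G_n(y,z) = \int_0^z {G_{n-1}(y - x_n, z) f_n(x_n) \dif x_n}$, and then substitute the inductive hypothesis for $G_{n-1}$ evaluated at first argument $y - x_n$. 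The key algebraic simplification is that the shifted argument telescopes, namely $(y - x_n) - \sum_{i=2}^{n-1} {x_i} = y - \sum_{i=2}^n {x_i}$, so the inner $(n-2)$-fold integral together with the outer integration over $x_n$ assembles into exactly the $(n-1)$-fold integral of \eqref{equation:exact formula CDF}, with the factor $f_n(x_n)$ absorbed into the product $\prod_{i=2}^n {f_i(x_i)}$.

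The only point requiring a little care --- and the closest thing to an obstacle --- is the bookkeeping around the domain: when $x_n > y$ the first argument $y - x_n$ is negative, so $(y - x_n, z) \notin \R_+^2$ and the inductive hypothesis is nominally applied outside its stated domain. This is harmless under the paper's extension-by-zero convention, since $G_{n-1}(y - x_n, z) = 0$ there (because $Y_{n-1} \geq 0$), and correspondingly $G_1$ vanishes at any negative first argument appearing inside the multiple integral; hence both sides agree (at zero) on that part of the range. Notably, no interchange of the order of integration is needed, so no Fubini-type justification arises --- each step is a direct substitution of one integral expression into another --- which is exactly why the argument stays elementary and the bounded, piecewise-continuous structure of the $f_i$ is more than enough to make every integral well defined.
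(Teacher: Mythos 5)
Your proposal is correct and follows essentially the same route as the paper: induction on $n$ with base case $n=2$ from the recurrence \eqref{equation:recurrence relation of joint CDF}, and an inductive step that substitutes the $(n-2)$-fold formula for $G_{n-1}$ into the recurrence, telescoping the shifted argument. The extra remark about the extension-by-zero convention when $x_n > y$ is a fine (and harmless) bit of added care that the paper leaves implicit.
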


\begin{proof}
Based on the recurrence relation \eqref{equation:recurrence relation of joint CDF}, we give a proof by induction on $n$. Basis step: For $n=2$, \eqref{equation:recurrence relation of joint CDF} yields $G_2 (y,z) = \int_0^z {G_1(y - x,z) f_2(x) \dif x} = \int_0^z {G_1(y - x_2,z) f_2(x_2) \dif x_2}$. Inductive step: Suppose that $G_{k-1} (y,z) = \bigintsss_{x_{k-1}=0}^z \cdots \bigintsss_{x_2=0}^z  G_1 \left( {y - \sum_{i=2}^{k-1} {x_i}},z \right) \prod_{i=2}^{k-1} {f_i(x_i) \dif x_i}$, for some arbitrary integer $k \geq 3$ (inductive hypothesis). Then, from \eqref{equation:recurrence relation of joint CDF} we obtain 
\[
\begin{split}
G_k (y,z) & = \int_0^z {G_{k-1}(y - x,z) f_k(x) \dif x}    \\
& = \bigintsss_0^z \bigintsss_{x_{k-1}=0}^z \cdots \bigintsss_{x_2=0}^z  G_1 \left( {y - x - \sum_{i=2}^{k-1} {x_i}},z \right) \left( \prod_{i=2}^{k-1} {f_i(x_i) \dif x_i} \right) {f_k(x) \dif x}   \\
& = \bigintsss_{x_k=0}^z \cdots \bigintsss_{x_2=0}^z  G_1 \left( {y - \sum_{i=2}^k {x_i}},z \right) \prod_{i=2}^k {f_i(x_i) \dif x_i}  .
\end{split}
\] 
Hence, \eqref{equation:exact formula CDF} is true for all integers $n \geq 2$. 
\end{proof}

\subsection{Recurrence Relation for the Joint PDF}

\begin{theorem} \label{theorem:Joint PDF}
The joint PDF of $Y_n$ and $Z_n$, defined by \eqref{equation:sum and maximum}, is given by the following recurrence relation 
\begin{equation} \label{equation:recurrence relation of joint PDF}
\begin{split}
g_n (y,z) & = f_n(z) \int_0^z {g_{n-1} (y - z,x) \dif x} + \int_0^z {f_n(x) g_{n-1} (y - x,z) \dif x}  \\
& = f_n(z) \int_{\tfrac{y-z}{n-1}}^{\min(y-z,z)} {g_{n-1} (y - z,x) \dif x} + \int_{\max(y-(n-1)z,0)}^{\min(y-z,z)} {f_n(x) g_{n-1} (y - x,z) \dif x}    ,  \\
& \:\, \quad  \forall n \in \N \setminus \{1\} , \; \forall (y,z) \in \mathcal{D}_n  ,
\end{split}
\end{equation}
with initial condition
\begin{equation} \label{equation:initial condition for joint PDF}
g_1 (y,z) = f_1 (y) \delta(y - z) = f_1 (z) \delta(y - z)  , \quad \forall (y,z) \in \mathcal{D}_1  .
\end{equation}
Furthermore, $g_n (y,z)$ is continuous on its domain $\mathcal{D}_n$, for all $n \in \N \setminus \{1\}$. 
\end{theorem}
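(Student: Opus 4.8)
The plan is to read off the joint PDF from the CDF recurrence \eqref{equation:recurrence relation of joint CDF} by differentiating it twice, since $g_n(y,z) = \md{}{2}{y}{}{z}{} G_n(y,z)$. The crucial observation is that every point of the domain $\mathcal{D}_n$ satisfies $z \le y$, so $\min(y,z) = z$ there and \eqref{equation:recurrence relation of joint CDF} reduces to $G_n(y,z) = \int_0^z G_{n-1}(y-x,z)\,f_n(x)\,\dif x$, an integral whose upper limit depends on $z$ but not on $y$. I would differentiate first with respect to $z$: because the upper limit equals $z$, Leibniz's integral rule (Appendix~\ref{appendix:Partial Differentiation Under the Integral Sign}) yields a boundary term $G_{n-1}(y-z,z)\,f_n(z)$ together with $\int_0^z \pd{}{z} G_{n-1}(y-x,z)\,f_n(x)\,\dif x$. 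Differentiating this with respect to $y$ then produces the two summands of \eqref{equation:recurrence relation of joint PDF}.

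To identify these summands I would use two elementary consequences of $G_{n-1}(a,b) = \int_0^b \int_0^a g_{n-1}(y',z')\,\dif y'\,\dif z'$, namely $\pd{}{a} G_{n-1}(a,b) = \int_0^b g_{n-1}(a,z')\,\dif z'$ and $\md{}{2}{a}{}{b}{} G_{n-1}(a,b) = g_{n-1}(a,b)$. Applying $\pd{}{y}$ to the boundary term and using the chain rule (the first argument $y-z$ has $y$-derivative $1$) converts it, via the first identity, into $f_n(z)\int_0^z g_{n-1}(y-z,x)\,\dif x$, the first summand; applying $\pd{}{y}$ under the integral sign to the remaining term and using the chain rule together with the second identity gives $\int_0^z g_{n-1}(y-x,z)\,f_n(x)\,\dif x$, the second summand. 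This establishes the first line of \eqref{equation:recurrence relation of joint PDF}. The tighter bounds of the second line are then a matter of bookkeeping on the support of $g_{n-1}$: the factor $g_{n-1}(y-z,x)$ vanishes unless $(y-z,x)\in\mathcal{D}_{n-1}$, i.e. $\tfrac{y-z}{n-1}\le x\le\min(y-z,z)$, while $g_{n-1}(y-x,z)$ vanishes unless $(y-x,z)\in\mathcal{D}_{n-1}$, i.e. $\max(y-(n-1)z,0)\le x\le\min(y-z,z)$.

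For the initial condition I would differentiate $G_1(y,z)=F_1(\min(y,z))$ in the distributional sense (Appendix~\ref{appendix:Dirac Delta Function}): since $\pd{}{z}F_1(\min(y,z))$ equals $f_1(z)$ where $z<y$ and $0$ where $z>y$, it can be written as $f_1(z)\,H(y-z)$, and a further derivative with respect to $y$ gives $f_1(z)\,\od{}{y}H(y-z) = f_1(z)\,\delta(y-z)$, which coincides with $f_1(y)\,\delta(y-z)$ because the delta forces $y=z$. Finally, I would prove continuity of $g_n$ on $\mathcal{D}_n$ for $n\ge 2$ by induction starting at $n=2$: substituting $g_1(a,b)=f_1(b)\,\delta(a-b)$ into the recurrence collapses both deltas and yields $g_2(y,z)=f_1(y-z)\,f_2(z)+f_1(z)\,f_2(y-z)$, a sum of products of continuous functions; in the inductive step the finite-limit form of \eqref{equation:recurrence relation of joint PDF} exhibits $g_n$ as $f_n(z)$ times an integral plus another integral, each having a continuous integrand (by the inductive hypothesis and the continuity of $f_n$) and continuous limits of integration, hence continuous.

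The step I expect to be the main obstacle is the rigorous justification of differentiating under the integral sign and of moving the $y$-derivative inside the integral: one must check that $G_{n-1}(y-x,z)\,f_n(x)$ and the partial derivatives appearing above are continuous (and suitably bounded) on the region of integration, which ultimately rests on the continuity of $G_{n-1}$ proved in Theorem~\ref{theorem:Recurrence relation and continuity of joint CDF} together with the standing assumption that each $f_n$ is continuous and bounded on $\R_+$. A secondary delicate point is the manipulation of the Dirac delta in the base case, where the distributional derivatives must be handled with the care described in Appendix~\ref{appendix:Dirac Delta Function}.
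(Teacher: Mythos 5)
Your overall route is the same as the paper's: differentiate the CDF recurrence \eqref{equation:recurrence relation of joint CDF} first in $z$ (Leibniz's rule with a variable upper limit) and then in $y$, identify the boundary term with $f_n(z)\int_0^z g_{n-1}(y-z,x)\dif x$ via $\pd{}{a}G_{n-1}(a,b)=\int_0^b g_{n-1}(a,z')\dif z'$, tighten the limits of integration using the support $\mathcal{D}_{n-1}$, obtain the initial condition distributionally from $G_1(y,z)=F_1(\min(y,z))$, and run the continuity induction from $n=2$. This is exactly what the paper does in its inductive step through Proposition~\ref{proposition:Partial Differentiation Under the Integral Sign}, including the verification of the continuity hypotheses on the region $\{x+z\le y\le x+(k-1)z,\ x\le z\}$ that you correctly anticipate as the main technical burden.

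The one place where your plan has a genuine hole is the base case $n=2$, precisely the point you flagged as ``delicate'' without resolving. For $k\ge 3$ the hypotheses of Proposition~\ref{proposition:Partial Differentiation Under the Integral Sign} can be checked because $g_{k-1}$ is, by the inductive hypothesis, an honest continuous function on $\mathcal{D}_{k-1}$, so $\md{}{2}{y}{}{z}{}h(x,y,z)=f_k(x)\,g_{k-1}(y-x,z)$ is continuous where it needs to be. For $n=2$ this breaks down: $g_1(y,z)=f_1(z)\,\delta(y-z)$ is a distribution, the mixed partial of the integrand is not a continuous function, and one cannot simply ``substitute $g_1$ into the recurrence and collapse the deltas,'' because the $n=2$ instance of the recurrence is exactly what must be proved and the continuity hypotheses under which you derived it fail there. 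The paper circumvents this by computing $G_2(y,z)=F_1(z)F_2(y-z)+\int_{y-z}^z F_1(y-x)f_2(x)\dif x$ in closed form and differentiating it with the classical fundamental theorem of calculus to get \eqref{equation:g_2} directly; only afterwards does it verify that the delta form of the right-hand side of \eqref{equation:recurrence relation of joint PDF} reproduces this answer. An alternative repair, also given in the paper, is to replace the continuity assumption on the mixed partial by uniform convergence of $\int_0^z \md{}{2}{y}{}{z}{}h(x,y,z)\dif x$, as in Remark~\ref{remark:Uniform convergence assumption}. With the base case fixed in one of these two ways, the rest of your argument goes through as written.
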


\begin{proof}
For all $n \in \N$, we know that $g_n(y,z) = 0$ for all $(y,z) \notin \mathcal{D}_n$, due to the definition of $Y_n$ and $Z_n$. Hence, the determination of $g_n(y,z)$ on $\mathcal{D}_n$ is sufficient for its full definition. In addition, recall that the sum, product, and composition of finitely many continuous functions are all continuous; the integral of a continuous function is continuous as well. 

First, let us prove the initial condition \eqref{equation:initial condition for joint PDF}. Note that $G_1 (y,z)$, given by \eqref{equation:initial condition for joint CDF}, can be expressed using the Heaviside step function as follows
\[
G_1 (y,z) = F_1 (\min(y,z)) \mathop{=}^{\text{a.e.}} F_1 (y)  H(z - y) + F_1 (z)  H(y - z)   ,
\]
where the last equality holds almost everywhere on $\R_+^2$, except on the set $\{(y,z) \in \R_+^2 : \break y=z \}$ which has Lebesgue measure zero (in the geometric sense of area). As a result, 
\[ 
\begin{split}
{\dpd{}{z}{} G_1 (y,z)} & \mathop{=}^{\text{a.e.}} F_1 (y)  \delta(z - y) + f_1 (z)  H(y - z) - F_1 (z)  \delta(y - z)    \\
& = F_1 (z)  \delta(z - y) - F_1 (z)  \delta(z - y) +  f_1 (z)  H(y - z)  \\
& = f_1 (z)  H(y - z)  ,
\end{split} 
\]
and 
\[
{\dmd{}{2}{y}{}{z}{} G_1(y,z)} \mathop{=}^{\text{a.e.}} f_1 (z)  \delta(y - z) = f_1 (y) \delta(y - z)  .
\]
In the above equations, we have used the fact that $F_1(x)$ and $f_1(x)$ are bounded and piecewise continuous on $\R$ (see also Appendix~\ref{appendix:Dirac Delta Function}). Since a PDF is almost unique when it exists, meaning that any two such PDFs coincide almost everywhere, we can choose $g_1 (y,z) = f_1 (z)  \delta(y - z) = f_1 (y) \delta(y - z)$ for all $(y,z) \in \R_+^2$ or, equivalently, for all $(y,z) \in \mathcal{D}_1 = \{ (y,z) \in \R_+^2 : y = z \}$, since $g_1(y,z) = 0$ for $y \neq z$. We can also verify that 
\[
\begin{split}
\int_0^z {\int_0^y  g_1(y',z')  \dif y' \dif z'} & = \int_0^z {f_1(z') \int_0^y \delta(y' - z')  \dif y' \dif z'} = \int_0^z {f_1(z') \int_{-z'}^{y-z'} \delta(w)  \dif w \dif z'}\\
& = \int_0^z {f_1(z') [-z' \leq 0 \leq y-z'] \dif z'} = \int_0^z {f_1(z') [0 \leq z' \leq y] \dif z'}   \\ 
& =  \int_0^{\min(y,z)} {f_1(z') \dif z'} = F_1 (\min(y,z)) = G_1 (y,z)  .
\end{split}
\]

Next, we will prove \eqref{equation:recurrence relation of joint PDF} and the continuity of $g_n (y,z)$ on its domain $\mathcal{D}_n$, for all $n \in \N \setminus \{1\}$, using induction on $n$. Basis step: For $n=2$ and by exploiting \eqref{equation:recurrence relation of joint CDF}, we obtain   
\[
\begin{split}
G_2(y,z) &=  \int_0^z {G_1(y - x,z) f_2(x) \dif x} = \int_0^z {F_1 (\min(y-x,z)) f_2(x) \dif x}  \\
& = F_1(z) \int_0^{y-z} {f_2(x) \dif x} + \int_{y-z}^z {F_1(y-x) f_2(x) \dif x}   \\
& = F_1(z) F_2(y-z) + \int_{y-z}^z {F_1(y-x) f_2(x) \dif x}   .
\end{split}
\]

Due to the fact that $F_1(y-x)$ and $f_2(x)$ are continuous for $0 \leq y-z \leq x \leq z$ and by the fundamental theorem of calculus (which is applicable on this region), the partial derivative of $G_2(y,z)$ with respect to $z$ is given by
\[
\begin{split}
\dpd{}{z} G_2(y,z) & = f_1(z) F_2(y-z) - F_1(z) f_2(y-z) + F_1(y-z) f_2(z) + F_1(z) f_2(y-z)   \\
& = F_1(y-z) f_2(z) + f_1(z) F_2(y-z)   , \quad \forall (y,z) \in \mathcal{D}_2
\end{split}
\]
Subsequently, differentiation with respect to $y$ gives $g_2(y,z)={\md{}{2}{y}{}{z}{} G_2(y,z)}$, that is,
\begin{equation} \label{equation:g_2}
g_2 (y,z) =  f_1(y-z) f_2(z) + f_1(z) f_2(y-z)  , \quad  \forall (y,z) \in \mathcal{D}_2  .
\end{equation}
Note that $g_2 (y,z)$ is continuous on $\mathcal{D}_2$, since it is the sum of products of two continuous functions on $\mathcal{D}_2$, $f_1(\cdot)$ and $f_2(\cdot)$. Now, for $n=2$, the first right-hand side of \eqref{equation:recurrence relation of joint PDF} is equal to  
\[
\begin{split}
& f_2(z) \int_0^z {g_1 (y - z,x) \dif x} + \int_0^z {f_2(x) g_1(y - x,z) \dif x}   \\
= & f_2(z) \int_{\max(y-z,0)}^{\min(y-z,z)} {g_1 (y - z,x) \dif x} + \int_{\max(y-z,0)}^{\min(y-z,z)} {f_2(x) g_1 (y - x,z) \dif x}   \\
= &  f_2(z) \int_{y-z}^{y-z} {g_1 (y - z,x) \dif x} + \int_{y-z}^{y-z} {f_2(x) g_1 (y - x,z) \dif x}     \\
= &  \left( f_1(y-z) f_2(z) + f_1(z) f_2(y-z) \right) \int_{y-z}^{y-z} {\delta(y - z - x) \dif x}     \\
= &  \left( f_1(y-z) f_2(z) + f_1(z) f_2(y-z) \right) \int_0^0 {\delta(w) \dif w}     \\
= & f_1(y-z) f_2(z) + f_1(z) f_2(y-z) = g_2(y,z)   ,  \quad \forall (y,z) \in \mathcal{D}_2  .
\end{split}
\]
The first equality follows from the domain $\mathcal{D}_1$ of $g_1(y,z)$, and the second equality is because $z \leq y \leq 2z$ on $\mathcal{D}_2$. 

Inductive step: Assume that \eqref{equation:recurrence relation of joint PDF} is true for $n=k-1$ and that $g_{k-1}(y,z)$ is defined and continuous on $\mathcal{D}_{k-1} = \{ (y,z) \in \R_+^2 : z \leq y \leq (k-1)z \}$, for some arbitrary integer $k \geq 3$ (inductive hypothesis). Then by taking the second-order mixed partial derivative of \eqref{equation:recurrence relation of joint CDF}, we obtain  
\[
g_k(y,z) = {\dmd{}{2}{y}{}{z}{} G_k(y,z)} = \dmd{}{2}{y}{}{z}{} \left( \int_0^z {f_k(x) G_{k-1}(y - x,z) \dif x} \right) .
\]
In order to apply Proposition~\ref{proposition:Partial Differentiation Under the Integral Sign}, let $a(z)=0$, $b(z)=z$, and $h(x,y,z)=f_k(x) G_{k-1}(y - x,z)$. Then, because $G_{k-1} (y,z) = \int_0^z {\int_0^y  g_{k-1}(y',z')  \dif y' \dif z'}$, we get for all $(x,y,z) \in \R_+^3$
\[
\dpd{}{y} h(x,y,z) = f_k(x) \int_0^z {g_{k-1} (y - x,z') \dif z'} = f_k(x) \int_{\max(\frac{y-x}{k-1},0)}^{\min(y-x,z)} {g_{k-1} (y - x,z') \dif z'}  ,
\]

\[
\dpd{}{z} h(x,y,z) = f_k(x) \int_0^{y-x} {g_{k-1} (y',z) \dif y'} = f_k(x) \int_z^{\min(y-x,(k-1)z)} {g_{k-1} (y',z) \dif y'} ,
\]

\[
{\dmd{}{2}{y}{}{z}{} h(x,y,z)} = f_k(x) g_{k-1} (y - x,z) .
\]
The limits of integration were appropriately modified by exploiting the domain $\mathcal{D}_{k-1}$ of $g_{k-1} (y,z)$ in order to remove zero values of the integrands. 

There is a technical point here regarding the assumptions of Proposition~\ref{proposition:Partial Differentiation Under the Integral Sign}. Observe that: i) $h(x,y,z)$ is continuous on $\{(x,y,z) \in \R_+^3 : x \leq y \}$; note that $G_{k-1}(y,z)$ is continuous on $\R_+^2$ due to Theorem~\ref{theorem:Recurrence relation and continuity of joint CDF}, ii) $\pd{}{y} h(x,y,z)$ exists (and is continuous) on $\{(x,y,z) \in \R_+^3 : \min(y-x,z) \geq \max(\frac{y-x}{k-1},0) \} = \{(x,y,z) \in \R_+^3 : x \leq y \leq x+(k-1)z \}$; note that $\min(\cdot,\cdot)$ and $\max(\cdot,\cdot)$ are both continuous functions, and $\min(a,b) \geq \max(c,d) \iff (a \geq c) \land (a \geq d) \land (b \geq c) \land (b \geq d)$, iii) $\pd{}{z} h(x,y,z)$ exists and is continuous on $\{(x,y,z) \in \R_+^3 : \min(y-x,(k-1)z) \geq z \} = \{(x,y,z) \in \R_+^3 : x+z \leq y \}$, and iv) $\md{}{2}{y}{}{z}{} h(x,y,z)$ exists and is continuous on $\{(x,y,z) \in \R_+^3 : x+z \leq y \leq x+(k-1)z \}$. The intersection of all these sets is equal to $\{(x,y,z) \in \R_+^3 : x+z \leq y \leq x+(k-1)z \}$. Since the domain of integration of $h(x,y,z)$ is $0 \leq x \leq z$, Proposition~\ref{proposition:Partial Differentiation Under the Integral Sign} is applicable on $\{(x,y,z) \in \R_+^3 : x+z \leq y \leq x+(k-1)z ,\; x \leq z\}$, thus yielding 
\[
\begin{split}
g_k(y,z) & = f_k(z) \int_0^z {g_{k-1} (y - z,z') \dif z'} + \int_0^z {f_k(x) g_{k-1} (y - x,z) \dif x}  \\
& = f_k(z) \int_{\max(\frac{y-z}{k-1},0)}^{\min(y-z,z)} {g_{k-1} (y - z,z') \dif z'} +  \int_{\max(y-(k-1)z,0)}^{\min(y-z,z)} {f_k(x) g_{k-1} (y - x,z) \dif x}   \\
& = f_k(z) \int_{\tfrac{y-z}{k-1}}^{\min(y-z,z)} {g_{k-1} (y - z,z') \dif z'} + \int_{\max(y-(k-1)z,0)}^{\min(y-z,z)} {f_k(x) g_{k-1} (y - x,z) \dif x}    ,
\end{split}
\]
for all $(y,z) \in \R_+^2$ such that $z \leq y \leq kz$, that is, for all $(y,z) \in \mathcal{D}_k$. From the last equality, it is clear that $g_k(y,z)$ is continuous on $\mathcal{D}_k$, because the integrands are continuous on their respective domain of integration, and the limits of integration are continuous as well.

Therefore, \eqref{equation:recurrence relation of joint PDF} is true and $g_n (y,z)$ is continuous on its domain $\mathcal{D}_n$, for all integers $n \geq 2$. This completes the proof. 
\end{proof}

\begin{remark}
An alternative proof of \eqref{equation:recurrence relation of joint PDF} for $n=2$ is based on Proposition~\ref{proposition:Partial Differentiation Under the Integral Sign} together with Remark~\ref{remark:Uniform convergence assumption}. In particular, let $a(z)=0$, $b(z)=z$, and $h(x,y,z)=f_2(x) G_1(y - x,z)$. Since $G_1(y,z) = \int_0^z {\int_0^y  g_1(y',z')  \dif y' \dif z'}$, we obtain for all $(x,y,z) \in \R_+^3$  
\[
\begin{split}
\dpd{}{y} h(x,y,z) & = f_2(x) \int_0^z {g_1 (y - x,z') \dif z'} = f_2(x) \int_{\max(y-x,0)}^{\min(y-x,z)} {g_1 (y - x,z') \dif z'}  \\
& = f_1(y-x) f_2(x) \int_{\max(y-x,0)}^{\min(y-x,z)} {\delta(y-x-z')  \dif z'}  \\
& = f_1(y-x) f_2(x) \int_{\max(y-x-z,0)}^{\min(y-x,0)} {\delta(w)  \dif w}   \\
& = f_1(y-x) f_2(x) [\max(y-x-z,0) \leq 0 \leq \min(y-x,0)]   \\
& = f_1(y-x) f_2(x) [x \leq y \leq x+z]   ,
\end{split}
\]

\[
\begin{split}
\dpd{}{z} h(x,y,z) & = f_2(x) \int_0^{y-x} {g_1 (y',z) \dif y'} = f_2(x) \int_z^{\min(y-x,z)} {g_1 (y',z) \dif y'} \\ & = f_1(z) f_2(x) \int_z^{\min(y-x,z)} {\delta(y'-z) \dif y'}  =  f_1(z) f_2(x) \int_0^{\min(y-x-z,0)} {\delta(w) \dif w}    \\
& = f_1(z) f_2(x) [0 \leq \min(y-x-z,0)] = f_1(z) f_2(x) [x+z \leq y]   ,
\end{split}
\]

\[
{\dmd{}{2}{y}{}{z}{} h(x,y,z)} = f_2(x) g_1 (y - x,z) = f_1(z) f_2(x) \delta(y-x-z) = f_1(z) f_2(y-z) \delta(y-z-x) ,
\]

\[
\begin{split}
\int_0^z {\dmd{}{2}{y}{}{z}{} h(x,y,z) \dif x} & = \int_0^z {f_2(x) g_1 (y - x,z) \dif x} = \int_{\max(y-z,0)}^{\min(y-z,z)} {f_2(x) g_1 (y - x,z) \dif x}    \\
& = f_1(z) f_2(y-z) \int_{\max(y-z,0)}^{\min(y-z,z)} {\delta(y-z-x) \dif x}  =    \\ 
& = f_1(z) f_2(y-z) \int_{\max(y-2z,0)}^{\min(y-z,0)} {\delta(w) \dif w}   \\
& = f_1(z) f_2(y-z) [\max(y-2z,0) \leq 0 \leq \min(y-z,0)]   \\
& =  f_1(z) f_2(y-z) [z \leq y \leq 2z]   .
\end{split}
\]

Now, we can observe that: i) $h(x,y,z)$ is continuous on $\{(x,y,z) \in \R_+^3 : x \leq y \}$; note that $G_1(y,z)$ is continuous on $\R_+^2$ because of Theorem~\ref{theorem:Recurrence relation and continuity of joint CDF}, ii) $\pd{}{y} h(x,y,z)$ exists (and is continuous) on $\{(x,y,z) \in \R_+^3 : x \leq y \leq x+z \}$, and iii) $\pd{}{z} h(x,y,z)$ exists and is continuous on $\{(x,y,z) \in \R_+^3 : x+z \leq y \}$. The intersection of these sets is equal to $\{(x,y,z) \in \R_+^3 : x+z \leq y \leq x+z \} = \{(x,y,z) \in \R_+^3 : y = x+z \}$. Because the domain of integration of $h(x,y,z)$ is $0 \leq x \leq z$, the assumptions of Proposition~\ref{proposition:Partial Differentiation Under the Integral Sign} regarding $h(x,y,z)$, $\pd{}{y} h(x,y,z)$, and $\pd{}{z} h(x,y,z)$ are all satisfied on $\{(x,y,z) \in \R_+^3 : y = x+z ,\; x \leq z\}$. Furthermore, the integral $\int_0^z {\md{}{2}{y}{}{z}{} h(x,y,z) \dif x}$ converges to $f_1(z) f_2(y-z)$ uniformly on $\{(y,z) \in \R_+^2 : z \leq y \leq 2z \} = \mathcal{D}_2$; see also Appendix~\ref{appendix:Dirac Delta Function} for more details on this topic. Consequently, based on Proposition~\ref{proposition:Partial Differentiation Under the Integral Sign} and Remark~\ref{remark:Uniform convergence assumption}, we conclude that 
\[
\begin{split}
g_2(y,z) & = {\dmd{}{2}{y}{}{z}{} G_2(y,z)} = \dmd{}{2}{y}{}{z}{} \left( \int_0^z {f_2(x) G_1(y - x,z) \dif x} \right)   \\
& = f_2(z) \int_0^z {g_1 (y - z,z') \dif z'} + \int_0^z {f_2(x) g_1 (y - x,z) \dif x}    \\
& = f_2(z) \int_{y-z}^{\min(y-z,z)} {g_1 (y - z,z') \dif z'} +  \int_{\max(y-z,0)}^{\min(y-z,z)} {f_2(x) g_1 (y - x,z) \dif x}   \\
& = f_1(y-z) f_2(z) + f_1(z) f_2(y-z) ,  \quad \forall (y,z) \in \mathcal{D}_2  .
\end{split}
\] 
\end{remark}

\begin{remark}
From \eqref{equation:g_2}, we can observe that the joint PDF $g_2 (y,z)$ remains the same by interchanging $X_1$ and $X_2$, and therefore $f_1(\cdot)$ and $f_2(\cdot)$, as it was expected. Moreover, in the case of exponential random variables with $f_i(x) = \lambda_i e^{-\lambda_i x}$, $x \geq 0$ and $\lambda_i > 0$, we obtain $g_2(y,z) = \lambda_1 \lambda_2 (e^{-\lambda_1 (y-z) - \lambda_2 z} + e^{-\lambda_1 z - \lambda_2 (y-z)})$, $\forall (y,z) \in \mathcal{D}_2$. This is consistent with \cite[Eq.~(7)]{Arendarczyk-Kozubowski-Panorska_a}. 
\end{remark}

\subsubsection{\textnormal{\textbf{Independent and Identically Distributed Random Variables.}}}

The following lemma will be useful for simplifying \eqref{equation:recurrence relation of joint PDF} in the i.i.d.~case. 

\begin{lemma} \label{lemma:Integration property of joint PDF}
For all $n \in \N \setminus \{1\}$ and for all $(y,z) \in \mathcal{D}_{n+1}$, it holds that  
\[
\int_0^z  {g_n(y-z,x) \dif x}  =  \int_0^z {f_n(x) \int_0^z {g_{n-1}(y-z-x,w) \dif w} \dif x} .
\]
\end{lemma}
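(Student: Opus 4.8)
The plan is to recognize both sides of the identity as values of the \emph{partial density} $P_m(u,z) \defeq \pd{}{u} G_m(u,z) = \int_0^z g_m(u,w)\,\dif w$. Indeed, applying the fundamental theorem of calculus (in the first variable) to $G_m(u,z) = \int_0^z \int_0^u g_m(u',z')\,\dif u'\,\dif z'$ gives $\pd{}{u} G_m(u,z) = \int_0^z g_m(u,w)\,\dif w$. Hence the left-hand side of the lemma is exactly $P_n(y-z,z)$, while the right-hand side is $\int_0^z f_n(x)\,P_{n-1}(y-z-x,z)\,\dif x$. It therefore suffices to establish the single convolution-type relation $P_n(u,z) = \int_0^z f_n(x)\,P_{n-1}(u-x,z)\,\dif x$ for $u \ge 0$ and then set $u = y-z$, which is nonnegative precisely because $(y,z)\in\mathcal{D}_{n+1}$ forces $y \ge z$.

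To obtain this relation I would differentiate the CDF recurrence \eqref{equation:recurrence relation of joint CDF} with respect to its first argument. Writing $G_n(u,z) = \int_0^z G_{n-1}(u-x,z)\,f_n(x)\,\dif x$, the limits of integration $0$ and $z$ do not depend on $u$, so differentiation under the integral sign reduces to the elementary case of fixed limits and yields $\pd{}{u} G_n(u,z) = \int_0^z \bigl[\pd{}{u} G_{n-1}(u-x,z)\bigr] f_n(x)\,\dif x$. Since $\pd{}{u} G_{n-1}(u-x,z) = P_{n-1}(u-x,z)$ by the same fundamental-theorem-of-calculus identity, this is precisely $P_n(u,z) = \int_0^z f_n(x)\,P_{n-1}(u-x,z)\,\dif x$. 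Substituting $u = y-z$ and renaming the inner dummy variable then reproduces the claimed equation verbatim.

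The only delicate point is justifying the interchange of differentiation and integration, for which I would invoke Proposition~\ref{proposition:Partial Differentiation Under the Integral Sign} in its (simpler) constant-limit special case. Its hypotheses are met because $G_{n-1}(\cdot,z)$ is continuous on $\R_+^2$ by Theorem~\ref{theorem:Recurrence relation and continuity of joint CDF}, its first-order partial derivative $P_{n-1}$ exists and is bounded (being dominated by the marginal density of $Y_{n-1}$, itself bounded as a convolution involving a bounded factor), and $f_n$ is bounded and piecewise continuous. Working at the level of the continuous CDF $G_{n-1}$ is exactly what sidesteps the main technical nuisance, namely the Dirac delta carried by $g_1$ in the base case $n=2$: the delta never enters the differentiation step and only reappears, harmlessly, when $\pd{}{u} G_{n-1}$ is rewritten as $\int_0^z g_{n-1}(u,w)\,\dif w$. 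I expect this regularity bookkeeping, rather than any computation, to be the principal obstacle, and it is already packaged in the appendix's differentiation-under-the-integral result.
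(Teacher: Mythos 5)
Your proposal is correct, but it takes a genuinely different route from the paper. The paper proves the lemma by substituting the already-established PDF recurrence \eqref{equation:recurrence relation of joint PDF} for $g_n(y-z,x)$, integrating over $0 \leq x \leq z$, and then combining the two resulting double integrals via an interchange of the order of integration (Fubini for nonnegative integrands) and a relabeling of the dummy variables $w \leftrightarrow x$; no differentiation is involved at all. You instead work one level up: you introduce the partial density $P_m(u,z) = \frac{\partial}{\partial u} G_m(u,z) = \int_0^z g_m(u,w)\,\dif w$ and obtain the identity by differentiating the CDF recurrence \eqref{equation:recurrence relation of joint CDF} once in its first argument. Your route has the conceptual advantage of bypassing Theorem~\ref{theorem:Joint PDF} entirely (it needs only Theorem~\ref{theorem:Recurrence relation and continuity of joint CDF} and the definition of $g_m$), and the convolution identity $P_n(u,z) = \int_0^z f_n(x) P_{n-1}(u-x,z)\,\dif x$ is a clean statement in its own right. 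The price is that you re-import the differentiation-under-the-integral machinery with its regularity bookkeeping, and here your citation is slightly off: Proposition~\ref{proposition:Partial Differentiation Under the Integral Sign} and Lemma~\ref{lemma:Leibniz's integral rule_constant limits} require \emph{continuity} of the differentiated integrand, which fails in your setting --- e.g.\ $\frac{\partial}{\partial u} G_1(u-x,z) = f_1(u-x)[u-x \leq z]$ jumps at $u-x = z$ and is undefined there, and more generally $P_{n-1}(\cdot,z)$ is only piecewise continuous. You must therefore invoke the dominated versions (Lemma~\ref{lemma:Leibniz's integral rule_constant limits_modified version}, using exactly the bound $P_{n-1}(u,z) \leq f_{Y_{n-1}}(u) \leq \|f_1\|_\infty$ you sketch), accept that the intermediate identities hold only almost everywhere in $u$, and then upgrade to all of $\mathcal{D}_{n+1}$ by observing that both sides of the lemma are continuous there (which uses the continuity of $g_n$ on $\mathcal{D}_n$ from Theorem~\ref{theorem:Joint PDF}). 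The paper's Fubini argument avoids all of this, which is presumably why it was chosen; but with the corrected references your proof goes through.
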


\begin{proof}
By virtue of \eqref{equation:recurrence relation of joint PDF}, we have  
\[
g_n(y-z,x) = f_n(x) \int_0^x {g_{n-1}(y-z-x,w) \dif w} + \int_0^x {f_n(w) g_{n-1}(y-z-w,x)  \dif w}  ,
\]
on the set $\{(x,y,z) \in \R^3 : x \geq 0, \; y \geq z, \; x \leq y-z \leq nx\} = \{(x,y,z) \in \R^3 : x \geq 0, \; z+x \leq y \leq z+nx\}$. Then, by integrating over the region $0 \leq x \leq z$, we obtain 
\[
\scalebox{0.95}{$
\begin{split}
\int_{0}^z  {g_n(y-z,x) \dif x}  & =  \int_0^z {f_n(x) \int_0^x {g_{n-1}(y-z-x,w) \dif w}  \dif x}  +  \int_0^z {\int_0^x {f_n(w) g_{n-1}(y-z-w,x)  \dif w}  \dif x}  \\
& \mathop{=}^{(c)} \int_0^z {f_n(x) \int_0^x {g_{n-1}(y-z-x,w) \dif w}  \dif x}  +  \int_0^z {f_n(w) \int_w^z {g_{n-1}(y-z-w,x)  \dif x}  \dif w}   \\ 
& \mathop{=}^{(d)}  \int_0^z {f_n(x) \int_0^x {g_{n-1}(y-z-x,w) \dif w}  \dif x}  +  \int_0^z {f_n(x) \int_x^z {g_{n-1}(y-z-x,w)  \dif w}  \dif x}       \\ 
& = \int_0^z {f_n(x) \left( \int_0^x {g_{n-1}(y-z-x,w) \dif w} + \int_x^z {g_{n-1}(y-z-x,w)  \dif w} \right) \dif x }     \\ 
& =  \int_0^z {f_n(x) \int_0^z {g_{n-1}(y-z-x,w) \dif w} \dif x}   ,   
\end{split}  $} 
\]
for all $(y,z) \in \R_+^2$ such that $z \leq y \leq (n+1)z$, that is, for all $(y,z) \in \mathcal{D}_{n+1}$. Note that equalities $(c)$ and $(d)$ have been derived by interchanging the order of integration \linebreak ($\{0 \leq x \leq z\} \land \{0 \leq w \leq x\} \iff \{0 \leq w \leq z\} \land \{w \leq x \leq z\}$) and by interchanging the variables $w$, $x$ in the second double integral, respectively.  
\end{proof}

\begin{theorem} \label{theorem:recurrence relation of joint PDF_iid}
Let $f(x)$ be the common PDF of random variables $\{X_i\}_{i \in \N}$, that is, \linebreak $f_i(x) = f(x)$, $\forall i \in \N$. Then, the PDF of $(Y_n,Z_n)$ is given by the recurrence relation 
\begin{equation} \label{equation:recurrence relation of joint PDF_iid}
\begin{split}
g_n (y,z) & = n f(z) \int_0^z {g_{n-1} (y - z,x) \dif x} = n f(z) \int_{\tfrac{y-z}{n-1}}^{\min(y-z,z)} {g_{n-1} (y - z,x) \dif x} , \\ 
& \:\, \quad \forall n \in \N \setminus \{1\}, \; \forall (y,z) \in \mathcal{D}_n ,  
\end{split}
\end{equation}
with initial condition 
\begin{equation} \label{equation:initial condition for joint PDF_iid}
g_1(y,z) = f(y) \delta(y - z) = f(z) \delta(y - z), \quad \forall (y,z) \in \mathcal{D}_1  .
\end{equation}
\end{theorem}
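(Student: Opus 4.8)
The plan is to deduce this i.i.d.\ result directly from the general recurrence \eqref{equation:recurrence relation of joint PDF} of Theorem~\ref{theorem:Joint PDF}, rather than re-deriving everything from scratch. The initial condition \eqref{equation:initial condition for joint PDF_iid} is immediate, being exactly \eqref{equation:initial condition for joint PDF} with $f_1 = f$. For the recurrence, substituting $f_n = f$ into \eqref{equation:recurrence relation of joint PDF} yields
\[
g_n(y,z) = f(z) \int_0^z {g_{n-1}(y-z,x) \dif x} + \int_0^z {f(x) g_{n-1}(y-x,z) \dif x} ,
\]
so the whole task reduces to showing that the second integral equals $(n-1)$ times the first, that is,
\[
\int_0^z {f(x) g_{n-1}(y-x,z) \dif x} = (n-1) f(z) \int_0^z {g_{n-1}(y-z,x) \dif x} , \quad \forall (y,z) \in \mathcal{D}_n .
\]
Intuitively this is a symmetry statement: the first term corresponds to the event that the last variable $X_n$ attains the maximum, and in the i.i.d.\ case each of the $n$ variables is equally likely to do so, which produces the overall factor~$n$.

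First I would settle the base case $n=2$. Specializing \eqref{equation:g_2} gives $g_2(y,z) = 2 f(y-z) f(z)$, whereas the right-hand side of \eqref{equation:recurrence relation of joint PDF_iid} equals $2 f(z) \int_0^z {f(y-z)\delta(y-z-x) \dif x}$; since $0 \leq y-z \leq z$ on $\mathcal{D}_2$, the delta integral evaluates to $1$, and the two expressions coincide.

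For the inductive step I would assume \eqref{equation:recurrence relation of joint PDF_iid} for $n=k-1$ and establish the displayed reduction identity for $n=k$. The engine is the inductive hypothesis together with Lemma~\ref{lemma:Integration property of joint PDF}. Invoking the inductive hypothesis at the point $(y-x,z)$ gives $g_{k-1}(y-x,z) = (k-1) f(z) \int_0^z {g_{k-2}(y-x-z,w) \dif w}$; substituting this into the left-hand integral yields
\[
\int_0^z {f(x) g_{k-1}(y-x,z) \dif x} = (k-1) f(z) \int_0^z {f(x) \int_0^z {g_{k-2}(y-z-x,w) \dif w} \dif x} .
\]
Then Lemma~\ref{lemma:Integration property of joint PDF}, applied with index $k-1$ and $f_{k-1}=f$, identifies the inner double integral precisely with $\int_0^z {g_{k-1}(y-z,x) \dif x}$. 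This supplies exactly the required factor $(k-1)$, and adding the first term completes the induction with the factor $n=k$.

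The main obstacle is the bookkeeping of domains rather than any hard analysis: one must verify that the inductive hypothesis is being applied at points lying in $\mathcal{D}_{k-1}$ and that Lemma~\ref{lemma:Integration property of joint PDF} is valid on $\mathcal{D}_k$, so that every integral identity holds on the correct region $(y,z) \in \mathcal{D}_k$. Once the algebraic substitution and the lemma are aligned, the argument is short; the delicate part is ensuring the ranges of $x$ and $w$ and the shifted arguments $y-z-x$ are mutually consistent, so that no spurious zero contributions are introduced or dropped.
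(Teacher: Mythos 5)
Your proposal is correct and follows essentially the same route as the paper: the initial condition from \eqref{equation:initial condition for joint PDF}, the base case via \eqref{equation:g_2} and the delta-function evaluation, and the inductive step that substitutes the inductive hypothesis at $(y-x,z)$ into the second integral of \eqref{equation:recurrence relation of joint PDF} and then invokes Lemma~\ref{lemma:Integration property of joint PDF} to convert the resulting double integral back into $(k-1) f(z)\int_0^z g_{k-1}(y-z,x)\dif x$. The domain bookkeeping you flag is exactly the point the paper addresses at the end of its proof (the hypothesis is applied on $\{x+z \leq y \leq x+(k-1)z\}$ with $0 \leq x \leq z$, which covers $\mathcal{D}_k$).
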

 
\begin{proof}
The initial condition can be deduced from \eqref{equation:initial condition for joint PDF}. Observe that the second equality in \eqref{equation:recurrence relation of joint PDF_iid} follows directly from the first by taking into account the domain $\mathcal{D}_{n-1}$ of $g_{n-1}(y,z)$ and that $y \geq z$ on $\mathcal{D}_n$. Therefore, it is sufficient to prove the first equality, using induction on $n$. 

Basis step: For $n=2$, the right-hand side of the first equality in \eqref{equation:recurrence relation of joint PDF_iid} gives
\[
2 f(z) \int_0^z {g_1 (y - z,x) \dif x} =  2 f(y-z) f(z) [z \leq y \leq 2z] = 2 f(y-z) f(z)  ,\quad \forall (y,z) \in \mathcal{D}_2  .
\]
In addition, by setting $f_1(x) = f_2(x) = f(x)$ in \eqref{equation:g_2}, we conclude that
\[
g_2(y,z) = 2 f(y-z) f(z) ,  \quad  \forall (y,z) \in \mathcal{D}_2 .
\]
Thus, $g_2(y,z) = 2 f(z) \int_0^z {g_1 (y - z,x) \dif x}$, for all $(y,z) \in \mathcal{D}_2$.

Inductive step: Suppose that 
\[
g_{k-1} (y,z) = (k-1) f(z) \int_0^z {g_{k-2} (y - z,w) \dif w} , \quad \forall (y,z) \in \mathcal{D}_{k-1} ,
\]
for some arbitrary integer $k \geq 3$ (inductive hypothesis). Then, \eqref{equation:recurrence relation of joint PDF} for $n=k$ and Lemma~\ref{lemma:Integration property of joint PDF} for $n=k-1$ yield  
\[
\begin{split}
g_k(y,z) & = f(z) \int_0^z {g_{k-1} (y - z,x) \dif x} + \int_0^z {f(x) g_{k-1} (y - x,z) \dif x}  \\
& = f(z) \int_0^z {g_{k-1} (y - z,x) \dif x} + (k-1) f(z) \int_0^z {f(x) \int_0^z {g_{k-2}(y-x-z,w) \dif w} \dif x}    \\
& =  f(z) \int_0^z {g_{k-1} (y - z,x) \dif x} + (k-1) f(z) \int_0^z {g_{k-1} (y - z,x) \dif x}    \\
& =  k f(z) \int_0^z {g_{k-1} (y - z,x) \dif x}   , \quad \forall (y,z) \in \mathcal{D}_k .    
\end{split}  
\] 
Note that the second equality holds on $\mathcal{D}_k$, because we have $g_{k-1} (y-x,z) = (k-1) f(z) \break \int_0^z {g_{k-2} (y - x - z,w) \dif w}$ on $\{(x,y,z) \in \R^3 : z \geq 0, \; y \geq x, \; z \leq y-x \leq (k-1)z\} = \{(x,y,z) \in \R^3 : z \geq 0, \; x+z \leq y \leq x+(k-1)z\}$ and the domain of integration is $0 \leq x \leq z$. 
\end{proof}

\begin{remark}
Eq.~\eqref{equation:recurrence relation of joint PDF_iid} is in complete agreement with \cite[Eq.~(2.18)]{Qeadan-Kozubowski-Panorska}, which was derived using transformations of order statistics. 
\end{remark}

Moreover, we can prove an exact formula (a multiple integral) for the joint PDF in the i.i.d.~case as follows.

\begin{proposition}
Suppose that all random variables $X_i$ have the same PDF $f(x)$, that is, $f_i(x) = f(x)$, $\forall i \in \N$. Then, the PDF of $(Y_n,Z_n)$ can be expressed as an $(n-1)$-dimensional integral as follows
\[
g_n (y,z) =  {n!} f(z) \bigintsss_{x_n=0}^z {f(x_n) \bigintsss_{x_{n-1}=0}^{x_n} f(x_{n-1}) \cdots \bigintsss_{x_3=0}^{x_4} f(x_3) \bigintsss_{x_2=0}^{x_3} g_1\left( {y - z - \sum_{i=3}^n {x_i}},x_2 \right) \prod_{i=2}^n {\dif x_i}} ,
\] 
for all $n \in \N \setminus \{1\}$ and for all $(y,z) \in \mathcal{D}_n$, where $g_1(y,z)$ is given by \eqref{equation:initial condition for joint PDF_iid}.
\end{proposition}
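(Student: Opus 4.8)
The plan is to prove the formula by induction on $n$, using only the simplified i.i.d.\ recurrence \eqref{equation:recurrence relation of joint PDF_iid}, namely $g_n(y,z) = n f(z) \int_0^z {g_{n-1}(y-z,x) \dif x}$. The whole argument is a matter of index bookkeeping: each application of the recurrence introduces exactly one additional single integral, and I would show that this new integral fits precisely as the new outermost layer of the claimed nested structure, so that the chain of upper limits grows by one at every step.

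\textbf{Base case.} For $n=2$ the asserted formula collapses. The product of explicit factors $f(x_i)$ over $3 \leq i \leq n$ is empty, the sum $\sum_{i=3}^{2} {x_i}$ is empty by the conventions adopted in the Notation, and the only surviving integral is the innermost one, whose upper limit becomes $z$. Hence the right-hand side reduces to $2! \, f(z) \int_{x_2=0}^z {g_1(y-z,x_2) \dif x_2}$, which is exactly \eqref{equation:recurrence relation of joint PDF_iid} for $n=2$. (Alternatively, one checks it directly against the explicit value $g_2(y,z) = 2 f(y-z) f(z)$ on $\mathcal{D}_2$ already recorded in \eqref{equation:recurrence relation of joint PDF_iid}.)

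\textbf{Inductive step.} Assume the formula holds for $n=k-1$. I would first evaluate the inductive hypothesis at the shifted point $(y-z,x)$ in place of $(y,z)$: every occurrence of $z$ in the formula for $g_{k-1}$ is replaced by $x$, so that the leading factor becomes $(k-1)! \, f(x)$ and the outer upper limit becomes $x$. Substituting this into the recurrence gives
\[
g_k(y,z) = k f(z) \int_0^z {(k-1)! \, f(x) \Bigl( \text{nested integrals with top limit } x \Bigr) \dif x} .
\]
Relabelling the outer integration variable as $x_k$, I then combine $k \cdot (k-1)! = k!$, merge the two differentials through $\prod_{i=2}^{k-1} {\dif x_i} \cdot \dif x_k = \prod_{i=2}^{k} {\dif x_i}$, absorb the new variable into the sum through $x_k + \sum_{i=3}^{k-1} {x_i} = \sum_{i=3}^{k} {x_i}$, and recognise the freshly created layer $\int_{x_k=0}^z {f(x_k)}$ sitting above the old outermost integral, whose upper limit $z$ has now become $x_k$. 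This reproduces the claimed formula for $n=k$ and completes the induction.

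\textbf{Main obstacle.} The only point requiring genuine care is the substitution $(y,z) \mapsto (y-z,x)$: one must track that the upper limit of the (previously) outermost integral in the hypothesis, namely $z$, is exactly replaced by the new integration variable $x_k$, thereby producing the telescoping chain of limits $0 \leq x_2 \leq x_3 \leq \cdots \leq x_k \leq z$. A secondary, purely technical point is to confirm that the hypothesis is invoked only where it is valid, i.e.\ that the shifted argument $(y-z,x)$ lies in $\mathcal{D}_{k-1}$ over the effective range of integration; this is inherited from the validity of \eqref{equation:recurrence relation of joint PDF_iid} on $\mathcal{D}_k$ together with the domain restrictions already established in its proof, so no new analytic input is needed.
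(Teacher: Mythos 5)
Your proposal is correct and takes essentially the same route as the paper: induction on $n$ using the i.i.d.\ recurrence \eqref{equation:recurrence relation of joint PDF_iid}, with the base case $g_2(y,z) = 2!\,f(z)\int_0^z g_1(y-z,x_2)\dif x_2$ and the inductive step carried out by evaluating the hypothesis at $(y-z,x)$, relabelling $x$ as $x_k$, and combining $k\cdot(k-1)! = k!$. The only difference is cosmetic: you flag the domain issue for the shifted argument explicitly, whereas the paper performs the substitution formally without comment.
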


\begin{proof}
Once again, we apply induction on $n$. Basis step: For $n=2$, \eqref{equation:recurrence relation of joint PDF_iid} yields $g_2 (y,z) = 2 f(z) \int_0^z {g_1 (y - z,x) \dif x} = 2! f(z) \int_0^z {g_1 (y - z,x_2) \dif x_2}$. Inductive step: Suppose that 
\[
g_{k-1} (y,z) =  {(k-1)!} f(z) \bigintsss_{x_{k-1}=0}^z {f(x_{k-1}) \cdots \bigintsss_{x_3=0}^{x_4} f(x_3) \bigintsss_{x_2=0}^{x_3} g_1\left( {y - z - \sum_{i=3}^{k-1} {x_i}},x_2 \right) \prod_{i=2}^{k-1} {\dif x_i}} ,
\]
for some arbitrary integer $k \geq 3$ (inductive hypothesis). Then, from \eqref{equation:recurrence relation of joint PDF_iid} we obtain 
\[
\scalebox{0.92}{$
\begin{split}
g_k (y,z) & = k f(z) \int_0^z {g_{k-1} (y - z,x) \dif x}  \\ 
& = k! f(z) \bigintsss_0^z {f(x) \bigintsss_{x_{k-1}=0}^{x} f(x_{k-1}) \cdots \bigintsss_{x_3=0}^{x_4} f(x_3) \bigintsss_{x_2=0}^{x_3} g_1\left( {y - z - x - \sum_{i=3}^{k-1} {x_i}},x_2 \right) \left( \prod_{i=2}^{k-1} {\dif x_i} \right) \dif x}     \\
& = k! f(z) \bigintsss_{x_k=0}^z {f(x_k) \bigintsss_{x_{k-1}=0}^{x_k} f(x_{k-1}) \cdots \bigintsss_{x_3=0}^{x_4} f(x_3) \bigintsss_{x_2=0}^{x_3} g_1\left( {y - z - \sum_{i=3}^k {x_i}},x_2 \right) \prod_{i=2}^k {\dif x_i}}  .
\end{split} $}
\] 
This concludes the proof. 
\end{proof}

\subsection{Application:~Peak-to-Average Ratio}

The peak-to-average ratio of the $n^\text{th}$ order is defined by   
\[
\rho_n \defeq \frac{\max_{1 \leq i \leq n} X_i}{\tfrac{1}{n} \sum_{i=1}^n {X_i}} = \frac{Z_n}{\tfrac{1}{n} Y_n} , \quad \text{whenever}\ Y_n \neq 0 .
\] 
The probability that $\rho_n$ lies within the interval $[\alpha,\beta]$, $0 \leq \alpha \leq \beta$, is given by 
\begin{equation}
\begin{split}
\Pr (\alpha \leq \rho_n \leq \beta) & = \Pr (Y_n > 0, \tfrac{\alpha}{n} Y_n \leq Z_n \leq \tfrac{\beta}{n} Y_n) = \int_0^\infty {\int_{\tfrac{\alpha}{n} y}^{\tfrac{\beta}{n} y}  g_n(y,z)  \dif z \dif y} \\
& =  \int_0^\infty {\int_{\max\left( \tfrac{\alpha}{n} y , \tfrac{1}{n} y \right)}^{\min\left( \tfrac{\beta}{n} y , y \right)}  g_n(y,z)  \dif z \dif y} = \int_0^\infty {\int_{\tfrac{\max(\alpha,1)}{n} y}^{\min\left( \tfrac{\beta}{n},1 \right) y}  g_n(y,z)  \dif z \dif y}, 
\end{split} 
\end{equation}
where $g_n(y,z)$ is computed recursively according to Theorem~\ref{theorem:Joint PDF}. 

For example, this kind of calculation can be useful in the performance analysis of communication systems \cite{Morrison-Tobias}. In such systems the peak-to-average ratio is also known as ``peak-to-average power ratio (PAPR)'' because each $X_i$ represents the power of a random signal, which is definitely a nonnegative quantity (i.e., $X_i \geq 0$).

\subsection{Other Applications}

By exploiting the recursive formula of the joint PDF in Theorem~\ref{theorem:Joint PDF}, we can calculate several probabilistic quantities. Some indicative examples are the marginal PDFs 
\[f_{Y_n}(y) = \int_0^\infty {g_n(y,z) \dif z} \quad \text{and} \quad f_{Z_n}(z) = \int_0^\infty {g_n(y,z) \dif y},\] 
the conditional PDFs 
\[f_{Y_n|Z_n} (y|z) = \frac{g_n(y,z)}{f_{Z_n}(z)} = \frac{g_n(y,z)}{\int_0^\infty {g_n(y',z) \dif y'}} \quad \text{and} \quad f_{Z_n|Y_n} (z|y) = \frac{g_n(y,z)}{f_{Y_n}(y)} = \frac{g_n(y,z)}{\int_0^\infty {g_n(y,z') \dif z'}},\] 
as well as the expectation 
\[\mathbb{E} (h(Y_n,Z_n)) = \int_0^\infty {\int_0^\infty  h(y,z) g_n(y,z)  \dif y \dif z} = \int_0^\infty {\int_z^{nz}  h(y,z) g_n(y,z)  \dif y \dif z},\] 
where $h(y,z)$ is a real-valued function defined on $\R_+^2$ such that the last double integral exists. Note that the joint moment $\mathbb{E} (Y_n^\alpha Z_n^\beta)$, with $\alpha, \beta \in \R$, can be obtained from the latter expression by setting $h(y,z) = y^\alpha z^\beta$, whenever the double integral is defined.

\section{Discrete Random Variables}
\label{section:Discrete Random Variables}

In this section, we extend the methodology to the case of discrete random variables. Let $\{\widehat{X}_i\}_{i \in \N}$ be a sequence of \emph{independent, not necessarily identically distributed,} nonnegative discrete random variables. For simplicity, we assume that all of them are \emph{integer-valued}, i.e., $\widehat{X}_i \in \N_0$, $\forall i \in \N$. In addition, let $\widehat{f}_i(k) \defeq \Pr (\widehat{X}_i = k) = \widehat{F}_i (k) - \widehat{F}_i (k-1),\ \forall k \in \N_0$, and $\widehat{F}_i(x) \defeq \Pr (\widehat{X}_i \leq x) = \sum_{k'=0}^{\lfloor x \rfloor} {\widehat{f}_i(k')},\ \forall x \geq 0$, be the PMF and CDF of $\widehat{X}_i$, respectively. As in the continuous case, we define the sum and maximum of the first $n \in \N$ variables     
\begin{equation} \label{equation:sum and maximum_discrete}
\widehat{Y}_n \defeq \sum_{i=1}^n {\widehat{X}_i}= \widehat{X}_1 + \cdots + \widehat{X}_n , \qquad  \widehat{Z}_n \defeq \max_{1 \leq i \leq n} {\widehat{X}_i}  = \max (\widehat{X}_1,\dots,\widehat{X}_n) .
\end{equation}
Generally, $\widehat{Y}_n$ and $\widehat{Z}_n$ are dependent random variables. Moreover, it will be helpful to define the following sets   
\begin{equation}
\widehat{\mathcal{D}}_n \defeq \{ (l,m) \in \N_0^2 : m \leq l \leq n m \} , \quad \forall n \in \N .
\end{equation}
Afterwards, we will study the joint CDF and PMF of $\widehat{Y}_n$ and $\widehat{Z}_n$, that is, $\widehat{G}_n (y,z) \defeq \Pr (\widehat{Y}_n \leq y,\widehat{Z}_n \leq z ) = \sum_{l=0}^{\lfloor y \rfloor} \sum_{m=0}^{\lfloor z \rfloor} {\widehat{g}_n (l,m)},\ \forall (y,z) \in \R_+^2$, and $\widehat{g}_n (l,m) \defeq \Pr (\widehat{Y}_n = l,\widehat{Z}_n = m),\ \forall (l,m) \in \widehat{\mathcal{D}}_n$. Observe that $(\widehat{Y}_n, \widehat{Z}_n) \in \N_0^2$ and $\widehat{Z}_n \leq \widehat{Y}_n \leq n \widehat{Z}_n$, thus $\widehat{g}_n (l,m) = 0$ for all $(l,m) \notin \widehat{\mathcal{D}}_n$. In particular, we will derive the discrete counterparts of continuous-case recurrence relations, but with a significant difference between the joint PDF and PMF (since the Leibniz integral rule is no longer applicable in the discrete case).

\subsection{Recurrence Relation for the Joint CDF}

\begin{theorem} \label{theorem:Recurrence relation of joint CDF_discrete}
The joint CDF of $\widehat{Y}_n$ and $\widehat{Z}_n$, defined by \eqref{equation:sum and maximum_discrete}, is given by the following recurrence relation 
\begin{equation} \label{equation:recurrence relation of joint CDF_discrete}
\begin{split}
\widehat{G}_n (y,z) & = \sum_{k=0}^{\lfloor z \rfloor} {\widehat{G}_{n-1}(y - k,z) \widehat{f}_n(k)} = \sum_{k=0}^{\min(\lfloor y \rfloor,\lfloor z \rfloor)} {\widehat{G}_{n-1}(y - k,z) \widehat{f}_n(k)} ,  \\
& \:\, \quad \forall n \in \N \setminus \{1\} , \;  \forall (y,z) \in \R_+^2  ,
\end{split}
\end{equation} 
with initial condition
\begin{equation} \label{equation:initial condition for joint CDF_discrete}
\widehat{G}_1 (y,z) = \widehat{F}_1 (\min(y,z)) ,  \quad \forall (y,z) \in \R_+^2 .
\end{equation} 
\end{theorem}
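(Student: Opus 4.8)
The plan is to follow the structure of the proof of Theorem~\ref{theorem:Recurrence relation and continuity of joint CDF} verbatim, replacing integration against the PDF by summation against the PMF. The starting point is the same pair of decompositions, $\widehat{Y}_n = \widehat{Y}_{n-1} + \widehat{X}_n$ and $\widehat{Z}_n = \max(\widehat{Z}_{n-1}, \widehat{X}_n)$, which let me rewrite the defining event as the intersection $\{\widehat{Y}_{n-1} + \widehat{X}_n \leq y\} \cap \{\widehat{Z}_{n-1} \leq z\} \cap \{\widehat{X}_n \leq z\}$, using that $\max(\widehat{Z}_{n-1}, \widehat{X}_n) \leq z$ is equivalent to the conjunction of $\widehat{Z}_{n-1} \leq z$ and $\widehat{X}_n \leq z$. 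No induction is needed here, since the statement asserts only the recurrence and the initial condition, and makes no continuity claim (the discrete joint CDF is a step function).

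Next I would condition on the value of $\widehat{X}_n$. Because $\widehat{X}_n$ is $\N_0$-valued, the law of total probability becomes a sum over $k \in \N_0$ weighted by $\widehat{f}_n(k)$; and because $\widehat{X}_n$ is independent of the pair $(\widehat{Y}_{n-1}, \widehat{Z}_{n-1})$, conditioning on $\{\widehat{X}_n = k\}$ replaces $\widehat{X}_n$ by the constant $k$ inside the probability. This yields $\widehat{G}_n(y,z) = \sum_{k=0}^{\infty} \Pr(\widehat{Y}_{n-1} \leq y - k,\, \widehat{Z}_{n-1} \leq z,\, k \leq z)\, \widehat{f}_n(k)$, and recognizing the surviving probability as $\widehat{G}_{n-1}(y-k, z)$ gives the first claimed equality once the summation range is settled. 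The initial condition then follows from the same one-line computation as in the continuous case: $\widehat{G}_1(y,z) = \Pr(\widehat{X}_1 \leq y,\, \widehat{X}_1 \leq z) = \Pr(\widehat{X}_1 \leq \min(y,z)) = \widehat{F}_1(\min(y,z))$.

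The one place demanding genuine care — and essentially the only difference from the continuous argument — is converting the two real-valued constraints into correct integer summation limits. The indicator $[k \leq z]$ with $k \in \N_0$ is equivalent to $[k \leq \lfloor z \rfloor]$, which truncates the sum at $\lfloor z \rfloor$ and produces the first expression in \eqref{equation:recurrence relation of joint CDF_discrete}. For the second expression I would observe that $\widehat{G}_{n-1}(y-k, z) = 0$ whenever $y - k < 0$, because $\widehat{Y}_{n-1} \geq 0$ forces the CDF to vanish for a negative first argument; for integer $k$ this means the term is zero as soon as $k > \lfloor y \rfloor$, so the upper limit may be lowered to $\min(\lfloor y \rfloor, \lfloor z \rfloor)$ without changing the sum. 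I expect this floor-function bookkeeping to be the main (though quite mild) obstacle, the remainder being a direct transcription of the continuous proof under the substitutions $\int \to \sum$ and $f_n \to \widehat{f}_n$.
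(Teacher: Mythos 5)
Your proposal is correct and matches the paper's own argument essentially line for line: the same decomposition $\widehat{Y}_n = \widehat{Y}_{n-1}+\widehat{X}_n$, $\widehat{Z}_n = \max(\widehat{Z}_{n-1},\widehat{X}_n)$, the law of total probability over $\widehat{X}_n = k$, independence of $\widehat{X}_n$ from $(\widehat{Y}_{n-1},\widehat{Z}_{n-1})$, and the truncation of the sum first at $\lfloor z\rfloor$ and then at $\min(\lfloor y\rfloor,\lfloor z\rfloor)$ because $\widehat{G}_{n-1}(y-k,z)$ vanishes for $k>y$. The floor-function bookkeeping you flag is handled exactly as you describe.
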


\begin{proof}
In a similar way, using the law of total probability, we can write
\[
\begin{split}
\widehat{G}_n (y,z) & = \Pr (\widehat{Y}_{n-1} + \widehat{X}_n \leq y, \max(\widehat{Z}_{n-1},\widehat{X}_n) \leq z)   \\
& = \Pr (\widehat{Y}_{n-1} + \widehat{X}_n \leq y, \widehat{Z}_{n-1} \leq z, \widehat{X}_n \leq z)  \\
& = \sum_{k=0}^{\infty} {\Pr (\widehat{Y}_{n-1} + \widehat{X}_n \leq y, \widehat{Z}_{n-1} \leq z, \widehat{X}_n \leq z \;|\; \widehat{X}_n = k) \Pr (\widehat{X}_n = k)}  \\
& = \sum_{k=0}^{\lfloor z \rfloor} {\widehat{G}_{n-1}(y - k,z) \widehat{f}_n(k)}  =  \sum_{k=0}^{\min(\lfloor y \rfloor,\lfloor z \rfloor)} {\widehat{G}_{n-1}(y - k,z) \widehat{f}_n(k)}  ,
\end{split}
\]
where the last equality is because $\widehat{G}_{n-1}(y - k,z) = 0$ if $k>y$. The initial condition is   
\[ 
\widehat{G}_1 (y,z) = \Pr (\widehat{Y}_1 \leq y, \widehat{Z}_1 \leq z) = \Pr (\widehat{X}_1 \leq \min(y,z)) = \widehat{F}_1 (\min(y,z)) .   \qedhere
\]   
\end{proof}

Note that $\widehat{G}_n (y,z)$ is \emph{not} continuous on $\R_+^2$, in contrast to $G_n (y,z)$ in the case of continuous random variables (according to Theorem~\ref{theorem:Recurrence relation and continuity of joint CDF}). Moreover, we can derive an exact formula (a~multiple~sum) for the joint CDF. 

\begin{proposition} 
The joint CDF of $\widehat{Y}_n$ and $\widehat{Z}_n$ can be expressed as an $(n-1)$-dimensional sum, that is,
\begin{equation} \label{equation:exact formula CDF_discrete}
\widehat{G}_n (y,z) = \mathlarger{\mathlarger{\sum}}_{k_n=0}^{\lfloor z \rfloor} \cdots \mathlarger{\mathlarger{\sum}}_{k_2=0}^{\lfloor z \rfloor}  \widehat{G}_1 \left( {y - \sum_{i=2}^n {k_i}},z \right) \prod_{i=2}^n {\widehat{f}_i(k_i)}  , \quad \forall n \in \N \setminus \{1\}  , \; \forall (y,z) \in \R_+^2 , 
\end{equation}
where $\widehat{G}_1(y,z)$ is given by \eqref{equation:initial condition for joint CDF_discrete}. 
\end{proposition}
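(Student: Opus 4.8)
The plan is to mirror exactly the induction used for the continuous analogue \eqref{equation:exact formula CDF}, replacing integrals by sums and the continuous recurrence \eqref{equation:recurrence relation of joint CDF} by its discrete counterpart \eqref{equation:recurrence relation of joint CDF_discrete}. I would argue by induction on $n$, with the initial condition $\widehat{G}_1(y,z)$ supplied by \eqref{equation:initial condition for joint CDF_discrete}.

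For the basis step $n = 2$, I would apply \eqref{equation:recurrence relation of joint CDF_discrete} directly and relabel the summation index $k$ as $k_2$, obtaining $\widehat{G}_2(y,z) = \sum_{k_2=0}^{\lfloor z \rfloor} \widehat{G}_1(y - k_2, z)\, \widehat{f}_2(k_2)$, which is precisely \eqref{equation:exact formula CDF_discrete} for $n=2$.

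For the inductive step, I would assume that \eqref{equation:exact formula CDF_discrete} holds for $n = k-1$ for some integer $k \geq 3$, and then apply \eqref{equation:recurrence relation of joint CDF_discrete} to write $\widehat{G}_k(y,z) = \sum_{k_k=0}^{\lfloor z \rfloor} \widehat{G}_{k-1}(y - k_k, z)\, \widehat{f}_k(k_k)$. Substituting the inductive hypothesis (with $y$ replaced by $y - k_k$) into this expression, pulling the factor $\widehat{f}_k(k_k)$ inside the nested sums, and treating the outer summation as the new outermost sum over $k_k$ collapses everything into a single $(k-1)$-dimensional sum. The two algebraic points to verify are that the shifted first argument $y - k_k - \sum_{i=2}^{k-1} k_i$ becomes $y - \sum_{i=2}^{k} k_i$, and that the product $\widehat{f}_k(k_k) \prod_{i=2}^{k-1} \widehat{f}_i(k_i)$ extends to $\prod_{i=2}^{k} \widehat{f}_i(k_i)$; both follow from straightforward reindexing.

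The argument involves no genuine conceptual obstacle; the only care required is the bookkeeping of the nested summation indices and confirming that every inner sum legitimately retains the upper limit $\lfloor z \rfloor$. This is justified because the recurrence holds for all $(y,z) \in \R_+^2$ and the summands with $k_k > y$ vanish, since $\widehat{G}_{k-1}(y - k_k, z) = 0$ whenever $k_k > y$; hence fixing each upper limit at $\lfloor z \rfloor$ merely introduces zero terms and does not affect the value. Thus the induction closes and \eqref{equation:exact formula CDF_discrete} holds for all integers $n \geq 2$.
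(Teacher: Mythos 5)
Your proof is correct and follows essentially the same route as the paper's own proof: induction on $n$ with the basis step $n=2$ obtained directly from \eqref{equation:recurrence relation of joint CDF_discrete}, and the inductive step obtained by substituting the hypothesis into the recurrence and reindexing the nested sums. The extra remark about vanishing terms when $k_k > y$ is harmless but not needed, since each application of the recurrence already carries the upper limit $\lfloor z \rfloor$ with the same $z$ throughout.
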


\begin{proof}
Based on the recurrence relation \eqref{equation:recurrence relation of joint CDF_discrete}, we provide a proof by induction on $n$. Basis step: For $n=2$, \eqref{equation:recurrence relation of joint CDF_discrete} gives $\widehat{G}_2 (y,z) = \sum_{k=0}^{\lfloor z \rfloor} {\widehat{G}_1(y - k,z) \widehat{f}_2(k)} = \sum_{k_2=0}^{\lfloor z \rfloor} {\widehat{G}_1(y - k_2,z) \widehat{f}_2(k_2)}$. Inductive step: Suppose that $\widehat{G}_{\tau-1} (y,z) = \sum_{k_{\tau-1}=0}^{\lfloor z \rfloor} \cdots \sum_{k_2=0}^{\lfloor z \rfloor}  \widehat{G}_1 \left( {y - \sum_{i=2}^{\tau-1} {k_i}},z \right) \prod_{i=2}^{\tau-1} {\widehat{f}_i(k_i)}$, for some arbitrary integer $\tau \geq 3$ (inductive hypothesis). Then, from \eqref{equation:recurrence relation of joint CDF_discrete} we get 
\[
\begin{split}
\widehat{G}_\tau (y,z) & = \sum_{k=0}^{\lfloor z \rfloor} {\widehat{G}_{\tau-1}(y - k,z) \widehat{f}_\tau(k)}   \\
& = \mathlarger{\mathlarger{\sum}}_{k=0}^{\lfloor z \rfloor} \mathlarger{\mathlarger{\sum}}_{k_{\tau-1}=0}^{\lfloor z \rfloor} \cdots \mathlarger{\mathlarger{\sum}}_{k_2=0}^{\lfloor z \rfloor}  \widehat{G}_1 \left( {y - k - \sum_{i=2}^{\tau-1} {k_i}},z \right) \left( \prod_{i=2}^{\tau-1} {\widehat{f}_i(k_i)} \right) {\widehat{f}_\tau (k)}   \\
& = \mathlarger{\mathlarger{\sum}}_{k_\tau=0}^{\lfloor z \rfloor} \cdots \mathlarger{\mathlarger{\sum}}_{k_2=0}^{\lfloor z \rfloor}  \widehat{G}_1 \left( {y - \sum_{i=2}^\tau {k_i}},z \right) \prod_{i=2}^\tau {\widehat{f}_i(k_i)}  .
\end{split}
\] 
Therefore, \eqref{equation:exact formula CDF_discrete} is true for all integers $n \geq 2$. 
\end{proof}

\subsection{Recurrence Relation for the Joint PMF}

\begin{theorem} \label{theorem:Joint PMF}
The joint PMF of $\widehat{Y}_n$ and $\widehat{Z}_n$, defined by \eqref{equation:sum and maximum_discrete}, is given by the following recurrence relation 
\begin{equation} \label{equation:recurrence relation of joint PMF}
\begin{split}
\widehat{g}_n (l,m) & = \widehat{f}_n(m) \sum_{k=0}^m { \widehat{g}_{n-1} (l-m,k) }  +  \sum_{k=0}^{m-1} {\widehat{f}_n(k) \widehat{g}_{n-1} (l-k,m)}  \\
& = \widehat{f}_n(m) \sum_{k=\left\lceil \tfrac{l-m}{n-1} \right\rceil}^{\min(l-m,m)} { \widehat{g}_{n-1} (l-m,k) }  +  \sum_{k=\max\left( l-(n-1)m,0 \right)}^{\min(l-m,m-1)} {\widehat{f}_n(k) \widehat{g}_{n-1} (l-k,m)} ,   \\
& \:\, \quad  \forall n \in \N \setminus \{1\} , \; \forall (l,m) \in \widehat{\mathcal{D}}_n  ,
\end{split}
\end{equation}
with initial condition
\begin{equation} \label{equation:initial condition for joint PMF}
\widehat{g}_1 (l,m) = \widehat{f}_1(l) = \widehat{f}_1(m)  , \quad \forall (l,m) \in \widehat{\mathcal{D}}_1  .
\end{equation} 
\end{theorem}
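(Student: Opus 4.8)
The plan is to establish the recurrence directly by a one-step conditioning argument, in the same spirit as the proof of Theorem~\ref{theorem:Recurrence relation of joint CDF_discrete} but tracking exact point probabilities rather than cumulative ones. A key observation is that, unlike the continuous case, no induction is needed for the recurrence itself: it is a genuine one-step relation between $\widehat{g}_n$ and $\widehat{g}_{n-1}$, and there is no continuity statement to carry along. The domain $\widehat{\mathcal{D}}_{n-1}$ enters only at the end, to tighten the summation ranges.

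First I would dispose of the initial condition. On $\widehat{\mathcal{D}}_1 = \{(l,m) \in \N_0^2 : l = m\}$ we have $l = m$, so $\widehat{g}_1(l,m) = \Pr(\widehat{X}_1 = l, \widehat{X}_1 = m) = \Pr(\widehat{X}_1 = l) = \widehat{f}_1(l) = \widehat{f}_1(m)$, which is exactly \eqref{equation:initial condition for joint PMF}. For the recurrence, I would write $\widehat{Y}_n = \widehat{Y}_{n-1} + \widehat{X}_n$ and $\widehat{Z}_n = \max(\widehat{Z}_{n-1}, \widehat{X}_n)$, then apply the law of total probability conditioning on $\widehat{X}_n = k$ and use that $\widehat{X}_n$ is independent of $(\widehat{Y}_{n-1}, \widehat{Z}_{n-1})$ to obtain
\[
\widehat{g}_n(l,m) = \sum_{k=0}^{\infty} \Pr\bigl(\widehat{Y}_{n-1} = l-k, \; \max(\widehat{Z}_{n-1}, k) = m\bigr) \, \widehat{f}_n(k).
\]

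The crux is the decomposition of the event $\{\max(\widehat{Z}_{n-1}, k) = m\}$. Since the maximum equals $m$ exactly when both arguments are $\leq m$ and at least one equals $m$, and since $\widehat{Z}_{n-1}$ is a nonnegative integer, there are precisely two contributing cases: (i) $k = m$, for which $\max(\widehat{Z}_{n-1}, m) = m \iff \widehat{Z}_{n-1} \leq m$; and (ii) $0 \leq k \leq m-1$, for which $\max(\widehat{Z}_{n-1}, k) = m \iff \widehat{Z}_{n-1} = m$. The range $k > m$ contributes nothing, as the maximum would then exceed $m$. Splitting the sum accordingly and rewriting $\Pr(\widehat{Y}_{n-1} = l-m, \widehat{Z}_{n-1} \leq m) = \sum_{k=0}^{m} \widehat{g}_{n-1}(l-m, k)$ together with $\Pr(\widehat{Y}_{n-1} = l-k, \widehat{Z}_{n-1} = m) = \widehat{g}_{n-1}(l-k, m)$ yields the first equality in \eqref{equation:recurrence relation of joint PMF}. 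The second equality then follows by restricting each sum to the indices on which the relevant copy of $\widehat{g}_{n-1}$ is nonzero: the requirement $(l-m, k) \in \widehat{\mathcal{D}}_{n-1}$ forces $\lceil (l-m)/(n-1) \rceil \leq k \leq \min(l-m, m)$, while $(l-k, m) \in \widehat{\mathcal{D}}_{n-1}$ forces $\max(l-(n-1)m, 0) \leq k \leq \min(l-m, m-1)$.

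The main obstacle here is conceptual rather than computational, and it is precisely the case analysis of the maximum above, which is where the discrete theory diverges from the continuous one. In the continuous setting the event $\{X_n = z\}$ is null, so its effect survives only through a Dirac delta term; here the atom at $k = m$ carries genuine mass $\widehat{f}_n(m)$ and produces the stand-alone boundary term $\widehat{f}_n(m)\sum_{k=0}^{m} \widehat{g}_{n-1}(l-m,k)$, cleanly separated from the interior sum over $k \leq m-1$. Making this split airtight requires only the care of excluding $k = m$ from the second sum so as not to double-count the boundary contribution; once that is observed, the remaining manipulations are elementary.
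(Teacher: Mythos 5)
Your proposal is correct and follows essentially the same route as the paper's own proof: conditioning on $\widehat{X}_n = k$ via the law of total probability, splitting the event $\{\max(\widehat{Z}_{n-1},k)=m\}$ into the atom $k=m$ (where $\widehat{Z}_{n-1}\leq m$ suffices) and the range $k\leq m-1$ (where $\widehat{Z}_{n-1}=m$ is forced), and then tightening the summation limits using the support $\widehat{\mathcal{D}}_{n-1}$. Your identification of the $k=m$ atom as the source of the discrete/continuous discrepancy matches the paper's Remark on this fundamental difference.
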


\begin{proof}
For all $n \in \N$, we know that $\widehat{g}_n (l,m) = 0$ for all $(l,m) \notin \widehat{\mathcal{D}}_n$, because of the definition of $\widehat{Y}_n$ and $\widehat{Z}_n$. First, let us prove the initial condition \eqref{equation:initial condition for joint PMF}: 
\[
\begin{split}
\widehat{g}_1 (l,m) & = \Pr (\widehat{Y}_1 = l,\widehat{Z}_1 = m) = \Pr (\widehat{X}_1 = l,\widehat{X}_1 = m)  \\
& = \widehat{f}_1(l) [l = m]  = \widehat{f}_1(m) [l = m] , \quad \forall (l,m) \in \N_0^2 , 
\end{split}
\]
or, equivalently, $\widehat{g}_1 (l,m) = \widehat{f}_1(l) = \widehat{f}_1(m)$, $\forall (l,m) \in \widehat{\mathcal{D}}_1 = \{ (l,m) \in \N_0^2 : l = m \}$.

In the discrete case the recursive application of Proposition~\ref{proposition:Partial Differentiation Under the Integral Sign} does not work, so we need an alternative approach to prove \eqref{equation:recurrence relation of joint PMF}. In particular, we make use of the law of total probability as follows
\[
\begin{split}
\widehat{g}_n (l,m) \defeq & \Pr (\widehat{Y}_n = l,\widehat{Z}_n = m) = \Pr (\widehat{Y}_{n-1} + \widehat{X}_n = l, \max(\widehat{Z}_{n-1},\widehat{X}_n) = m)   \\
 = & \sum_{k=0}^{\infty} {\Pr (\widehat{Y}_{n-1} + \widehat{X}_n = l, \max(\widehat{Z}_{n-1},\widehat{X}_n) = m \;|\; \widehat{X}_n = k) \Pr (\widehat{X}_n = k)}     \\
 = &  \sum_{k=0}^{\infty} {\widehat{f}_n (k) \Pr (\widehat{Y}_{n-1} = l-k, \max(\widehat{Z}_{n-1},k) = m)}    \\
 = &  \sum_{k=0}^{\infty} {\widehat{f}_n (k) \left( \Pr (\widehat{Y}_{n-1} = l-k, \max(\widehat{Z}_{n-1},k) = m, k=m) \right. }  \\
&  \,  {\left. + \Pr (\widehat{Y}_{n-1} = l-k, \max(\widehat{Z}_{n-1},k) = m, k \leq m-1) \right)}    \\
 = & {\widehat{f}_n (m) \Pr (\widehat{Y}_{n-1} = l-m, \max(\widehat{Z}_{n-1},m) = m)}  +  \sum_{k=0}^{m-1} {\widehat{f}_n (k) \Pr (\widehat{Y}_{n-1} = l-k, \widehat{Z}_{n-1} = m)}   \\
 = & {\widehat{f}_n (m) \Pr (\widehat{Y}_{n-1} = l-m, \widehat{Z}_{n-1} \leq m)}  +  \sum_{k=0}^{m-1} {\widehat{f}_n(k) \widehat{g}_{n-1} (l-k,m)}     \\
 = &  {\widehat{f}_n (m) \sum_{k=0}^m {\Pr (\widehat{Y}_{n-1} = l-m, \widehat{Z}_{n-1} = k)}}  +  \sum_{k=0}^{m-1} {\widehat{f}_n(k) \widehat{g}_{n-1} (l-k,m)}   \\
 = & \widehat{f}_n(m) \sum_{k=0}^m { \widehat{g}_{n-1} (l-m,k) }  +  \sum_{k=0}^{m-1} {\widehat{f}_n(k) \widehat{g}_{n-1} (l-k,m)} ,  
\end{split}
\]
for all $n \in \N \setminus \{1\}$ and for all $(l,m) \in \N_0^2$. Equivalently, we can restrict its domain to $\widehat{\mathcal{D}}_n$ since the joint PMF $\widehat{g}_n (l,m)$ definitely vanishes outside this region. 

Finally, using the domain $\widehat{\mathcal{D}}_{n-1} = \{ (l,m) \in \N_0^2 : m \leq l \leq (n-1)m \}$ of $\widehat{g}_{n-1} (l,m)$, we have 
\[
\begin{split}
\widehat{g}_n (l,m) = & \widehat{f}_n(m) \sum_{k=0}^m { \widehat{g}_{n-1} (l-m,k) }  +  \sum_{k=0}^{m-1} {\widehat{f}_n(k) \widehat{g}_{n-1} (l-k,m)}  \\
= & \widehat{f}_n(m) \sum_{k=\max\left({\left\lceil \tfrac{l-m}{n-1} \right\rceil},0\right)}^{\min(l-m,m)} { \widehat{g}_{n-1} (l-m,k) }  +  \sum_{k=\max\left( l-(n-1)m,0 \right)}^{\min(l-m,m-1)} {\widehat{f}_n(k) \widehat{g}_{n-1} (l-k,m)}       \\
= & \widehat{f}_n(m) \sum_{k=\left\lceil \tfrac{l-m}{n-1} \right\rceil}^{\min(l-m,m)} { \widehat{g}_{n-1} (l-m,k) }  +  \sum_{k=\max\left( l-(n-1)m,0 \right)}^{\min(l-m,m-1)} {\widehat{f}_n(k) \widehat{g}_{n-1} (l-k,m)} ,
\end{split}
\]
for all $n \in \N \setminus \{1\}$ and for all $(l,m) \in \widehat{\mathcal{D}}_n$, where the last equality follows from the fact that $l \geq m$ on $\widehat{\mathcal{D}}_n$ and the ceiling function is monotonically nondecreasing.
\end{proof}

\begin{figure}[!t]
\centering
\includegraphics[width=0.9\linewidth]{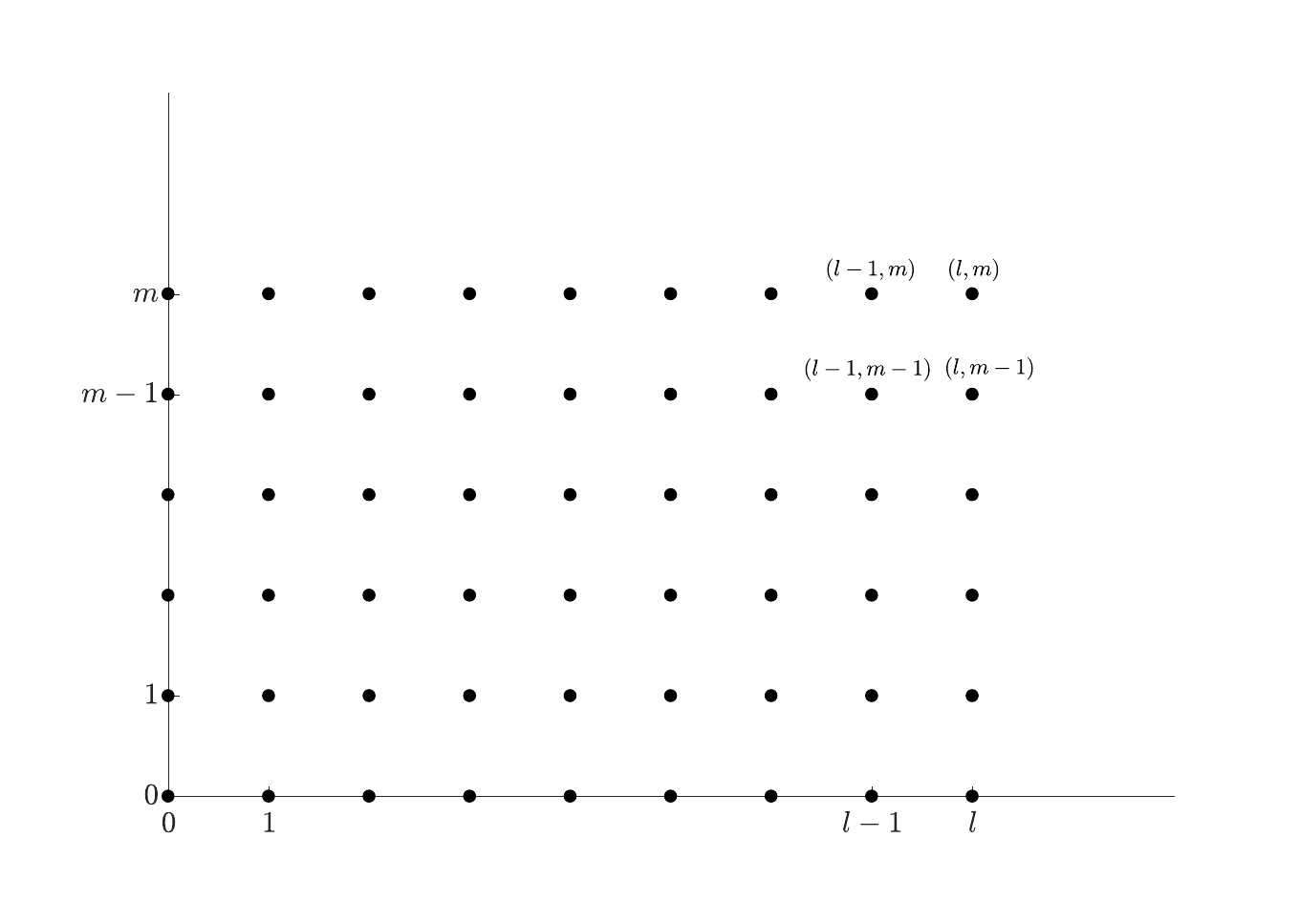} 
\caption{Two-dimensional lattice: each point $(l,m) \in \Z^2$ and each set of points $\mathcal{S} (l,m) \defeq \{(l',m') \in \Z^2 : 0 \leq l' \leq l,\, 0 \leq m' \leq m \}$  correspond to the probabilities $\widehat{g}_\tau (l,m)$ and $\widehat{G}_\tau (l,m) = \sum_{l'=0}^{l} \sum_{m'=0}^{m} {\widehat{g}_\tau (l',m')}$, respectively.}
\label{figure:2D_lattice}
\end{figure}

\break
\begin{remark}
An alternative proof of \eqref{equation:recurrence relation of joint PMF} using the recurrence relation of the joint CDF \eqref{equation:recurrence relation of joint CDF_discrete} is the following. Based on the two-dimensional lattice shown in Figure~\ref{figure:2D_lattice}, $\widehat{g}_\tau (l,m) \defeq \Pr (\widehat{Y}_\tau = l,\widehat{Z}_\tau = m)$ can be written as 
\begin{equation} \label{equation:g_tau} 
\begin{split}
\widehat{g}_\tau (l,m) & = \widehat{G}_\tau (l,m) - \widehat{G}_\tau (l,m-1) - \sum_{l'=0}^{l-1} {\widehat{g}_\tau (l',m)}    \\ 
& = \widehat{G}_\tau (l,m) - \widehat{G}_\tau (l,m-1) - \left( \widehat{G}_\tau (l-1,m) - \widehat{G}_\tau (l-1,m-1) \right)  \\
& = \widehat{G}_\tau (l,m) -  \widehat{G}_\tau (l-1,m) + \widehat{G}_\tau (l-1,m-1) - \widehat{G}_\tau (l,m-1)   , \\
& \:\, \quad  \forall \tau \in \N , \; \forall (l,m) \in \Z^2  . 
\end{split} 
\end{equation}  
By exploiting \eqref{equation:g_tau} for $\tau = n$ and leveraging \eqref{equation:recurrence relation of joint CDF_discrete}, which also holds for all $(y,z) \in \R^2$, we obtain 
\[ 
\begin{split}
\widehat{g}_n (l,m) & = \sum_{k=0}^{m} {\widehat{f}_n(k) \left( \widehat{G}_{n-1}(l - k,m) - \widehat{G}_{n-1}(l - 1 - k,m) \right)} \\
& \quad\,  +  \sum_{k=0}^{m-1} {\widehat{f}_n(k) \left( \widehat{G}_{n-1}(l - 1 - k,m-1) - \widehat{G}_{n-1}(l - k,m-1) \right)}     \\ 
& = {\widehat{f}_n(m) \left( \widehat{G}_{n-1}(l - m,m) - \widehat{G}_{n-1}(l - m - 1,m) \right)}    \\
& \quad\,  +  \sum_{k=0}^{m-1} {\widehat{f}_n(k) \left( \widehat{G}_{n-1}(l - k,m) - \widehat{G}_{n-1}(l - k - 1,m) \right.} \\
& \quad\,  + {\left. \widehat{G}_{n-1}(l - k - 1,m-1) - \widehat{G}_{n-1}(l - k,m-1) \right)} \\
& = \widehat{f}_n(m) \sum_{k=0}^m { \widehat{g}_{n-1} (l-m,k) }  +  \sum_{k=0}^{m-1} {\widehat{f}_n(k) \widehat{g}_{n-1} (l-k,m)}  ,  \\
& \:\, \quad  \forall n \in \N \setminus \{1\} , \; \forall (l,m) \in \Z^2  ,  
\end{split}
\]
where the last equality follows from \eqref{equation:g_tau} for $\tau = n-1 \geq 1$ and the fact that 
\[
\widehat{G}_{n-1}(l,m) - \widehat{G}_{n-1}(l-1,m) = \sum_{k=0}^m {\widehat{g}_{n-1} (l,k)},   \quad  \forall (l,m) \in \Z^2 .
\]
This can be easily seen from the lattice representation in Figure~\ref{figure:2D_lattice} with $\tau = n-1$. As a consequence, we get \eqref{equation:recurrence relation of joint PMF} after shrinking the domain to $\widehat{\mathcal{D}}_n$.  
\end{remark}

\begin{remark} \label{remark:Fundamental difference}
It is emphasized that \eqref{equation:recurrence relation of joint PMF} has a \emph{fundamental difference} compared to its continuous counterpart in \eqref{equation:recurrence relation of joint PDF}: the upper limit of the second sum is equal to $m-1$ instead of $m$. As a result, we \emph{cannot} derive \eqref{equation:recurrence relation of joint PMF} from \eqref{equation:recurrence relation of joint PDF} by just making the substitutions $(x,y,z) \mapsto (k,l,m)$, $\dif x \mapsto 1$, and by replacing integrals with sums. 
\end{remark}

\subsubsection{\textnormal{\textbf{Independent and Identically Distributed Random Variables.}}}

The following lemma is analogous to Lemma~\ref{lemma:Integration property of joint PDF} and will be helpful in order to prove another recurrence relation for the i.i.d.~case (in addition to the straightforward application of \eqref{equation:recurrence relation of joint PMF} with $\widehat{f}_i(k) = \widehat{f}(k)$ for all $i \in \N$).  

\begin{lemma} \label{lemma:Summation property of joint PMF}
For all $n \in \N \setminus \{1\}$ and for all $(l,m) \in \widehat{\mathcal{D}}_{n+1}$, it holds that  
\[
\sum_{k=0}^m  {\widehat{g}_n(l-m,k)}  =  \sum_{k=0}^m {\widehat{f}_n(k) \sum_{\nu=0}^m {\widehat{g}_{n-1}(l-m-k,\nu)}} .
\]
\end{lemma}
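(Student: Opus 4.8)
The plan is to mirror the proof of Lemma~\ref{lemma:Integration property of joint PDF}, replacing the Leibniz-rule manipulations of that continuous argument with discrete interchanges of summation order. First I would invoke the recurrence relation \eqref{equation:recurrence relation of joint PMF} evaluated at the point $(l-m,k)$ (that is, with $(l,m)$ replaced by $(l-m,k)$). This is legitimate because on $\widehat{\mathcal{D}}_{n+1}$ we have $l \geq m$, so $l-m \in \N_0$, while $k$ ranges over $\{0,\dots,m\} \subseteq \N_0$; hence $(l-m,k) \in \N_0^2$, and the recurrence holds on all of $\N_0^2$ since $\widehat{g}_n$ vanishes off $\widehat{\mathcal{D}}_n$. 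This gives
\[
\widehat{g}_n(l-m,k) = \widehat{f}_n(k) \sum_{\nu=0}^k \widehat{g}_{n-1}(l-m-k,\nu) + \sum_{\nu=0}^{k-1} \widehat{f}_n(\nu) \widehat{g}_{n-1}(l-m-\nu,k) .
\]

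Next I would sum both sides over $k=0,\dots,m$, splitting the result into a first double sum $A \defeq \sum_{k=0}^m \widehat{f}_n(k) \sum_{\nu=0}^k \widehat{g}_{n-1}(l-m-k,\nu)$ and a second double sum $B \defeq \sum_{k=0}^m \sum_{\nu=0}^{k-1} \widehat{f}_n(\nu) \widehat{g}_{n-1}(l-m-\nu,k)$. On $B$ I would interchange the order of summation via the equivalence $\{0 \leq k \leq m\} \land \{0 \leq \nu \leq k-1\} \iff \{0 \leq \nu \leq m-1\} \land \{\nu+1 \leq k \leq m\}$, and then swap the names of the dummy indices $\nu$ and $k$ in the resulting second sum, exactly as equalities $(c)$ and $(d)$ do in Lemma~\ref{lemma:Integration property of joint PDF}. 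This rewrites $B = \sum_{k=0}^{m-1} \widehat{f}_n(k) \sum_{\nu=k+1}^m \widehat{g}_{n-1}(l-m-k,\nu)$.

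Finally I would combine $A$ and $B$. Extending the outer index of $B$ to $k=m$ is harmless, since then the inner sum $\sum_{\nu=m+1}^m$ is empty and contributes nothing; the two expressions therefore pair up termwise under the common factor $\widehat{f}_n(k)$, and the inner ranges $\{0,\dots,k\}$ and $\{k+1,\dots,m\}$ merge into $\{0,\dots,m\}$, yielding $A+B = \sum_{k=0}^m \widehat{f}_n(k) \sum_{\nu=0}^m \widehat{g}_{n-1}(l-m-k,\nu)$, which is precisely the right-hand side of the claimed identity. I expect the main subtlety to be the bookkeeping of the off-by-one in the upper limit $k-1$ of the second sum — exactly the feature highlighted in Remark~\ref{remark:Fundamental difference} — ensuring that the diagonal terms $\nu=k$ are counted once (they live in $A$) and that padding $B$'s outer index up to $m$ adds zero. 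Everything else is a routine discrete Fubini interchange, the exact analogue of the order-of-integration swaps in the continuous lemma.
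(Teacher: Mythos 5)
Your proposal is correct and follows essentially the same route as the paper's own proof: apply the recurrence \eqref{equation:recurrence relation of joint PMF} at $(l-m,k)$, sum over $k$, interchange the order of summation in the second double sum, rename the dummy indices, and merge the inner ranges $\{0,\dots,k\}$ and $\{k+1,\dots,m\}$. The only cosmetic difference is that you pad the second sum's outer index up to $k=m$ with an empty inner sum, whereas the paper peels the $k=m$ term off the first sum; these are trivially equivalent.
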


\begin{proof}
By leveraging \eqref{equation:recurrence relation of joint PMF}, we get 
\[
\widehat{g}_n(l-m,k) =  \widehat{f}_n(k) \sum_{\nu=0}^k {\widehat{g}_{n-1}(l-m-k,\nu)} + \sum_{\nu=0}^{k-1} {\widehat{f}_n(\nu) \widehat{g}_{n-1}(l-m-\nu,k)}    ,
\]
on the set $\{(k,l,m) \in \Z^3 : k \geq 0, \; l \geq m, \; k \leq l-m \leq nk\} = \{(k,l,m) \in \Z^3 : \break k \geq 0, \; m+k \leq l \leq m+nk\}$. Then, by summing for $0 \leq k \leq m$, we have 
\[
\begin{split}
\sum_{k=0}^m  {\widehat{g}_n(l-m,k)}  &=   \sum_{k=0}^m {\widehat{f}_n(k) \sum_{\nu=0}^k {\widehat{g}_{n-1}(l-m-k,\nu)}}  +  \sum_{k=0}^m {\sum_{\nu=0}^{k-1} {\widehat{f}_n(\nu) \widehat{g}_{n-1}(l-m-\nu,k)}}  \\
& \mathop{=}^{(e)} \sum_{k=0}^m {\widehat{f}_n(k) \sum_{\nu=0}^k {\widehat{g}_{n-1}(l-m-k,\nu)}}  +  \sum_{\nu=0}^{m-1} {\widehat{f}_n(\nu) \sum_{k=\nu+1}^m {\widehat{g}_{n-1}(l-m-\nu,k)}}   \\ 
& \mathop{=}^{(f)}  \sum_{k=0}^m {\widehat{f}_n(k) \sum_{\nu=0}^k {\widehat{g}_{n-1}(l-m-k,\nu)}}  +  \sum_{k=0}^{m-1} {\widehat{f}_n(k) \sum_{\nu=k+1}^m {\widehat{g}_{n-1}(l-m-k,\nu)}}     \\ 
& = \widehat{f}_n(m) \sum_{\nu=0}^m {\widehat{g}_{n-1}(l-2m,\nu)} + \sum_{k=0}^{m-1} {\widehat{f}_n(k) \left( \sum_{\nu=0}^k {\widehat{g}_{n-1}(l-m-k,\nu)} \right.}   \\
& \quad\:  {\left. + \sum_{\nu=k+1}^m {\widehat{g}_{n-1}(l-m-k,\nu)} \right)}   \\ 
& =  \widehat{f}_n(m) \sum_{\nu=0}^m {\widehat{g}_{n-1}(l-2m,\nu)} + \sum_{k=0}^{m-1} {\widehat{f}_n(k) \sum_{\nu=0}^m {\widehat{g}_{n-1}(l-m-k,\nu)}}   \\
& =  \sum_{k=0}^m {\widehat{f}_n(k) \sum_{\nu=0}^m {\widehat{g}_{n-1}(l-m-k,\nu)}}   ,   
\end{split}   
\]
for all $(l,m) \in \N_0^2$ such that $m \leq l \leq (n+1)m$, that is, for all $(l,m) \in \widehat{\mathcal{D}}_{n+1}$. Note that equalities $(e)$ and $(f)$ have been derived by interchanging the order of summation ($\{0 \leq k \leq m\} \land \{0 \leq \nu \leq k-1\} \iff \{0 \leq \nu \leq m-1\} \land \{\nu + 1 \leq k \leq m\}$) and by interchanging the variables $k$, $\nu$ in the second double sum, respectively.  
\end{proof}

\begin{theorem} \label{theorem:recurrence relation of joint PMF_iid}
Let $\widehat{f}(k)$ be the common PMF of random variables $\{\widehat{X}_i\}_{i \in \N}$, that is, \linebreak $\widehat{f}_i(k) = \widehat{f}(k)$, $\forall i \in \N$. Furthermore, let  $\widehat{h}_n(l,m)$ be defined by the following recursive formula 
\begin{equation} \label{equation:recursive definition of h}
\begin{split}
\widehat{h}_n(l,m) \defeq &  \sum_{k=0}^m {\widehat{f}(k) \widehat{h}_{n-1}(l-k,m)} + {\widehat{f}(m) \widehat{g}_{n-1} (l-m,m)}  \\
 = & \sum_{k=\max\left( l-(n-1)m,0 \right)}^{\min(l-m,m)} {\widehat{f}(k) \widehat{h}_{n-1}(l-k,m)} + {\widehat{f}(m) \widehat{g}_{n-1} (l-m,m)}  ,  \\
& \: \forall n \in \N \setminus \{1\}, \; \forall (l,m) \in \widehat{\mathcal{D}}_n ,
\end{split}
\end{equation}
with initial condition $\widehat{h}_1(l,m) = 0$, $\forall (l,m) \in \widehat{\mathcal{D}}_1$. Then, the PMF of $(\widehat{Y}_n,\widehat{Z}_n)$ is given by the recurrence relation 
\begin{equation} \label{equation:recurrence relation of joint PMF_iid}
\begin{split}
\widehat{g}_n (l,m) & = n \widehat{f}(m) \sum_{k=0}^{m} { \widehat{g}_{n-1} (l-m,k) } - \widehat{h}_n(l,m) \\
& = n \widehat{f}(m) \sum_{k=\left\lceil \tfrac{l-m}{n-1} \right\rceil}^{\min(l-m,m)} { \widehat{g}_{n-1} (l-m,k) } - \widehat{h}_n(l,m)   ,  \\
&  \quad\:\,  \forall n \in \N \setminus \{1\}, \; \forall (l,m) \in \widehat{\mathcal{D}}_n ,  
\end{split} 
\end{equation}
with initial condition $\widehat{g}_1 (l,m) = \widehat{f}(l)  = \widehat{f}(m)$, $\forall (l,m) \in \widehat{\mathcal{D}}_1$. 
\end{theorem}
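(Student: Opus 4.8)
The plan is to argue by induction on $n$, closely paralleling the proof of Theorem~\ref{theorem:recurrence relation of joint PMF_iid}'s continuous analogue (Theorem~\ref{theorem:recurrence relation of joint PDF_iid}), but carefully tracking the ``defect'' created by the upper limit $m-1$ in the general recurrence \eqref{equation:recurrence relation of joint PMF} (cf.\ Remark~\ref{remark:Fundamental difference}). The starting point is \eqref{equation:recurrence relation of joint PMF} specialized to $\widehat{f}_i(k)=\widehat{f}(k)$, namely
\[
\widehat{g}_n(l,m) = \widehat{f}(m)\sum_{k=0}^m \widehat{g}_{n-1}(l-m,k) + \sum_{k=0}^{m-1}\widehat{f}(k)\,\widehat{g}_{n-1}(l-k,m),
\]
together with Lemma~\ref{lemma:Summation property of joint PMF}. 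The initial condition $\widehat{g}_1(l,m)=\widehat{f}(l)=\widehat{f}(m)$ is immediate from \eqref{equation:initial condition for joint PMF} with $\widehat{f}_1=\widehat{f}$, and, as in the continuous case, the second equality in \eqref{equation:recurrence relation of joint PMF_iid} follows from the first by restricting the summation range using the domain $\widehat{\mathcal{D}}_{n-1}$ of $\widehat{g}_{n-1}$ and the inequality $l\ge m$ on $\widehat{\mathcal{D}}_n$; hence it suffices to establish the first equality.

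For the base step $n=2$ I would substitute $\widehat{g}_1(l,m)=\widehat{f}(l)[l=m]$ directly into the general recurrence and into the definition \eqref{equation:recursive definition of h} of $\widehat{h}_2$ (recalling $\widehat{h}_1\equiv 0$). A short computation gives $\widehat{g}_2(l,m)=2\widehat{f}(m)\widehat{f}(l-m)$ for $m\le l\le 2m-1$ and $\widehat{g}_2(l,m)=\widehat{f}(m)^2$ at $l=2m$, while $\widehat{h}_2(l,m)=\widehat{f}(m)^2[l=2m]$; matching these against $2\widehat{f}(m)\sum_{k=0}^m \widehat{g}_1(l-m,k)-\widehat{h}_2(l,m)$ confirms \eqref{equation:recurrence relation of joint PMF_iid} for $n=2$. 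The point worth highlighting is that the correction $\widehat{h}_2$ is active precisely on the boundary $l=2m$, where the missing $k=m$ term of the second sum would otherwise cause a discrepancy of $\widehat{f}(m)^2$.

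For the inductive step, assume \eqref{equation:recurrence relation of joint PMF_iid} for $n-1$. The crucial manipulation is to restore the missing top term of the second sum,
\[
\sum_{k=0}^{m-1}\widehat{f}(k)\,\widehat{g}_{n-1}(l-k,m) = \sum_{k=0}^{m}\widehat{f}(k)\,\widehat{g}_{n-1}(l-k,m) - \widehat{f}(m)\,\widehat{g}_{n-1}(l-m,m),
\]
so that the inductive hypothesis can be applied to $\widehat{g}_{n-1}(l-k,m)$ throughout the now-complete sum $\sum_{k=0}^m$. Substituting it produces a main term $(n-1)\widehat{f}(m)\sum_{k=0}^m\widehat{f}(k)\sum_{\nu=0}^m \widehat{g}_{n-2}(l-m-k,\nu)$ and a term $-\sum_{k=0}^m \widehat{f}(k)\,\widehat{h}_{n-1}(l-k,m)$; Lemma~\ref{lemma:Summation property of joint PMF} (with $n$ replaced by $n-1$, valid on $\widehat{\mathcal{D}}_n$) collapses the double sum into $\sum_{\nu=0}^m \widehat{g}_{n-1}(l-m,\nu)$. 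Adding back the first term $\widehat{f}(m)\sum_{k=0}^m \widehat{g}_{n-1}(l-m,k)$ of the recurrence upgrades the coefficient from $n-1$ to $n$, while the leftover terms $-\bigl[\sum_{k=0}^m \widehat{f}(k)\,\widehat{h}_{n-1}(l-k,m)+\widehat{f}(m)\,\widehat{g}_{n-1}(l-m,m)\bigr]$ are recognized as exactly $-\widehat{h}_n(l,m)$ by the recursive definition \eqref{equation:recursive definition of h}. This closes the induction.

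The main obstacle is conceptual rather than computational: one must recognize that the single defect $\widehat{f}(m)\,\widehat{g}_{n-1}(l-m,m)$ generated at each level of the recursion does not disappear—unlike the continuous case, where the full upper limit $z$ lets the analogue of Lemma~\ref{lemma:Integration property of joint PDF} deliver the factor $n$ with no correction—but instead accumulates across the induction. The function $\widehat{h}_n$ is engineered precisely so that its defining recursion \eqref{equation:recursive definition of h}, a convolution of $\widehat{f}$ against $\widehat{h}_{n-1}$ plus the fresh defect $\widehat{f}(m)\,\widehat{g}_{n-1}(l-m,m)$, reproduces this accumulation term for term. A secondary technical point is verifying that the ranges on which the inductive hypothesis and Lemma~\ref{lemma:Summation property of joint PMF} are invoked are correct; this is handled by intersecting $\widehat{\mathcal{D}}_{n-1}$, $\widehat{\mathcal{D}}_n$, and the index constraint $0\le k\le m$, exactly as in the proof of Theorem~\ref{theorem:Joint PMF}.
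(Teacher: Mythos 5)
Your proposal is correct and follows essentially the same route as the paper's proof: induction on $n$ starting from the i.i.d.\ specialization of \eqref{equation:recurrence relation of joint PMF}, restoring the missing $k=m$ term of the second sum, invoking Lemma~\ref{lemma:Summation property of joint PMF} at level $n-1$ to upgrade the coefficient to $n$, and absorbing the accumulated defect into $\widehat{h}_n$ via \eqref{equation:recursive definition of h}. The only cosmetic difference is in the base case, where you verify $n=2$ by explicit case analysis on $l\le 2m-1$ versus $l=2m$ while the paper manipulates Iverson brackets; both amount to the same add-and-subtract of the $k=m$ term.
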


\begin{proof}
The initial condition $\widehat{g}_1(l,m)$ can be easily derived from \eqref{equation:initial condition for joint PMF}. Now, we will prove \eqref{equation:recurrence relation of joint PMF_iid} using induction on $n$ and given the recursive definition of $\widehat{h}_n(l,m)$ in \eqref{equation:recursive definition of h}.  

Basis step: For $n=2$, \eqref{equation:recurrence relation of joint PMF} implies that 
\[
\begin{split}
\widehat{g}_2 (l,m) & = \widehat{f}(m) \sum_{k=0}^m { \widehat{g}_1 (l-m,k) }  +  \sum_{k=0}^m {\widehat{f}(k) \widehat{g}_1 (l-k,m)} - \widehat{f}(m) \widehat{g}_1 (l-m,m)  \\
& = \widehat{f}(m) \sum_{k=0}^m { \widehat{g}_1 (l-m,k) } + \widehat{f}(m) \sum_{k=0}^m {\widehat{f}(k) [l-k=m] } - \widehat{h}_2(l,m)   \\ 
& =  \widehat{f}(m) \sum_{k=0}^m { \widehat{g}_1 (l-m,k) } + \widehat{f}(m) \sum_{k=0}^{m} {\widehat{f}(k) [l-m=k]} - \widehat{h}_2(l,m)    \\
& =  2 \widehat{f}(m) \sum_{k=0}^{m} { \widehat{g}_1 (l-m,k) } - \widehat{h}_2(l,m)  , \quad \forall (l,m) \in \widehat{\mathcal{D}}_2 ,
\end{split}
\] 
where we have used the fact that $\widehat{h}_2(l,m) = \sum_{k=0}^m {\widehat{f}(k) \widehat{h}_1(l-k,m)} + \widehat{f}(m) \widehat{g}_1 (l-m,m) = \widehat{f}(m) \widehat{g}_1 (l-m,m)$, $\forall (l,m) \in \widehat{\mathcal{D}}_2$, according to \eqref{equation:recursive definition of h} and its initial condition. As a result, \eqref{equation:recurrence relation of joint PMF_iid} is true for $n=2$. 

Inductive step: Assume that 
\[
\widehat{g}_{\tau-1} (l,m) = (\tau-1) \widehat{f}(m) \sum_{\nu=0}^{m} { \widehat{g}_{\tau-2} (l-m,\nu) } - \widehat{h}_{\tau-1}(l,m) , \quad \forall (l,m) \in \widehat{\mathcal{D}}_{\tau-1}  ,
\]
for some arbitrary integer $\tau \geq 3$ (inductive hypothesis). Then, \eqref{equation:recurrence relation of joint PMF} for $n=\tau$, Lemma~\ref{lemma:Summation property of joint PMF} for $n=\tau-1$, and \eqref{equation:recursive definition of h} for $n=\tau$ give  
\[
\begin{split}
\widehat{g}_\tau (l,m) &= \widehat{f}(m) \sum_{k=0}^m { \widehat{g}_{\tau-1} (l-m,k) }  +  \sum_{k=0}^{m} {\widehat{f}(k) \widehat{g}_{\tau-1} (l-k,m)} - {\widehat{f}(m) \widehat{g}_{\tau-1} (l-m,m)}  \\
& = \widehat{f}(m) \sum_{k=0}^m { \widehat{g}_{\tau-1} (l-m,k) }  + (\tau-1) \widehat{f}(m) \sum_{k=0}^m {\widehat{f}(k) \sum_{\nu=0}^m {\widehat{g}_{\tau-2}(l-k-m,\nu)}}    \\
& \quad\,  - \sum_{k=0}^m {\widehat{f}(k) \widehat{h}_{\tau-1}(l-k,m)} - \widehat{f}(m) \widehat{g}_{\tau-1} (l-m,m)  \\
& =  \widehat{f}(m) \sum_{k=0}^{m} { \widehat{g}_{\tau-1} (l-m,k) } + (\tau-1) \widehat{f}(m) \sum_{k=0}^{m} { \widehat{g}_{\tau-1} (l-m,k) } - \widehat{h}_\tau (l,m)   \\
& =  \tau \widehat{f}(m) \sum_{k=0}^{m} { \widehat{g}_{\tau-1} (l-m,k) } - \widehat{h}_\tau(l,m) , \quad \forall (l,m) \in \widehat{\mathcal{D}}_\tau  .
\end{split}  
\]
Note that the second equality is true on $\widehat{\mathcal{D}}_\tau$, since $\widehat{g}_{\tau-1} (l-k,m) = (\tau-1) \widehat{f}(m) \break \sum_{\nu=0}^{m} { \widehat{g}_{\tau-2} (l-k-m,\nu) } - \widehat{h}_{\tau-1}(l-k,m)$ on $\{(k,l,m) \in \Z^3 : m \geq 0, \; l \geq k, \; m \leq l-k \leq (\tau-1)m\} = \{(k,l,m) \in \Z^3 : m \geq 0, \; k+m \leq l \leq k+(\tau-1)m\}$ and the domain of summation is $0 \leq k \leq m$. Therefore, \eqref{equation:recurrence relation of joint PMF_iid} holds for all integers $n \geq 2$.
\end{proof}

\begin{remark}
The direct application of \eqref{equation:recurrence relation of joint PMF} with $\widehat{f}_i(k) = \widehat{f}(k)$, $\forall i \in \N$, certainly gives a simpler recurrence relation than \eqref{equation:recurrence relation of joint PMF_iid}. Nevertheless, Theorem~\ref{theorem:recurrence relation of joint PMF_iid} in comparison with Theorem~\ref{theorem:recurrence relation of joint PDF_iid} shows that there is a \emph{key difference} between discrete and continuous random variables: the term $\widehat{h}_n(l,m)$ in \eqref{equation:recurrence relation of joint PMF_iid} is \emph{not} present at all in \eqref{equation:recurrence relation of joint PDF_iid}; see also Remark~\ref{remark:Fundamental difference}. 
\end{remark}

\subsection{Application:~Peak-to-Average Ratio}

The peak-to-average ratio of the $n^\text{th}$ order is analogously defined by 
\[
\widehat{\rho}_n \defeq \frac{\max_{1 \leq i \leq n} \widehat{X}_i}{\tfrac{1}{n} \sum_{i=1}^n {\widehat{X}_i}} = \frac{\widehat{Z}_n}{\tfrac{1}{n} \widehat{Y}_n} , \quad \text{whenever}\ \widehat{Y}_n \neq 0  .
\]  
The probability that $\widehat{\rho}_n$ belongs to the interval $[\alpha,\beta]$, $0 \leq \alpha \leq \beta$, is equal to  
\begin{equation}
\begin{split} 
\Pr (\alpha \leq \widehat{\rho}_n \leq \beta) & = \Pr \left(\widehat{Y}_n \geq 1, \left\lceil \tfrac{\alpha}{n} \widehat{Y}_n \right\rceil \leq \widehat{Z}_n \leq \left\lfloor \tfrac{\beta}{n} \widehat{Y}_n \right\rfloor \right) = \sum_{l=1}^{\infty} { \sum_{m=\left\lceil \tfrac{\alpha}{n} l \right\rceil}^{\left\lfloor \tfrac{\beta}{n} l \right\rfloor} {\widehat{g}_n (l,m)} }   \\
& =  \sum_{l=1}^{\infty} { \sum_{m=\max \left( \left\lceil \tfrac{\alpha}{n} l \right\rceil , \left\lceil \tfrac{1}{n} l \right\rceil \right)}^{\min \left( \left\lfloor \tfrac{\beta}{n} l \right\rfloor , l \right)} {\widehat{g}_n (l,m)} }  =  \sum_{l=1}^{\infty} { \sum_{m=\left\lceil \tfrac{\max(\alpha,1)}{n} l \right\rceil}^{\left\lfloor \min \left( \tfrac{\beta}{n} , 1 \right) l \right\rfloor} {\widehat{g}_n (l,m)} }   ,
\end{split} 
\end{equation}
where $\widehat{g}_n(l,m)$ is computed recursively based on Theorem~\ref{theorem:Joint PMF}.

\subsection{Other Applications}

By leveraging the recurrence relation of the joint PMF in Theorem~\ref{theorem:Joint PMF}, we can calculate various probabilistic quantities. Indicative examples include the marginal PMFs 
\[f_{\widehat{Y}_n}(l) = \sum_{m=0}^\infty {\widehat{g}_n(l,m)} \quad \text{and} \quad f_{\widehat{Z}_n}(m) = \sum_{l=0}^\infty {\widehat{g}_n(l,m)},\] 
the conditional PMFs 
\[f_{\widehat{Y}_n|\widehat{Z}_n} (l|m) = \frac{\widehat{g}_n(l,m)}{f_{\widehat{Z}_n}(m)} = \frac{\widehat{g}_n(l,m)}{\sum_{l'=0}^\infty {\widehat{g}_n(l',m)}} \quad \text{and} \quad f_{\widehat{Z}_n|\widehat{Y}_n} (m|l) = \frac{\widehat{g}_n(l,m)}{f_{\widehat{Y}_n}(l)} = \frac{\widehat{g}_n(l,m)}{\sum_{m'=0}^\infty {\widehat{g}_n(l,m')}},\] 
as well as the expectation 
\[\mathbb{E} (h(\widehat{Y}_n,\widehat{Z}_n)) = \sum_{m=0}^\infty {\sum_{l=0}^\infty  h(l,m) \widehat{g}_n(l,m)} = \sum_{m=0}^\infty {\sum_{l=m}^{nm}  h(l,m) \widehat{g}_n(l,m)},\] 
where $h(l,m)$ is a real-valued function defined on $\N_0^2$ such that the last double sum exists. Note that the joint moment $\mathbb{E} (\widehat{Y}_n^\alpha \widehat{Z}_n^\beta)$, with $\alpha, \beta \in \R$, can be obtained from the latter formula by setting $h(l,m) = l^\alpha m^\beta$, provided that the double sum is defined.

\section{Extensions and Generalizations}
\label{section:Extensions and Generalizations}

\subsection{Continuous/Discrete Random Variables with Negative Values}

Let \linebreak $\{X'_i\}_{i \in \N}$ be a collection of independent, not necessarily identically distributed, continuous random variables such that $X'_i \geq -c$, for all $i \in \N$, where $c>0$ is a finite constant. Suppose that their PDFs $\{f'_i(x)\}_{i \in \N}$ are \emph{continuous} for $x \geq -c$, and equal to zero for $x < -c$; here, do \emph{not} confuse $f'_i(x)$ with $\od{}{x} f_i(x)$. By applying the variable transformation $X_i \defeq X'_i + c$, for all $i \in \N$, we can observe that each $X_i \geq 0$ and its PDF $f_i(x) = f'_{i}(x-c)$ is continuous on $\R_+$. As a result, the joint CDF of the sum $Y'_n \defeq \sum_{i=1}^n {X'_i}$ and the maximum $Z'_n \defeq \max_{1 \leq i \leq n} {X'_i}$ is written as  
\begin{equation}
G'_n (y,z) \defeq \Pr (Y'_n \leq y,Z'_n \leq z) = \Pr (Y_n \leq y + nc,Z_n \leq z + c) = G_n(y + nc, z + c)  ,
\end{equation}
where $Y_n \defeq \sum_{i=1}^n {X_i} = Y'_n + nc$, $Z_n \defeq \max_{1 \leq i \leq n} {X_i} = Z'_n + c$, and $G_n(\cdot,\cdot)$ is given by Theorem~\ref{theorem:Recurrence relation and continuity of joint CDF}. Moreover, the joint PDF of $Y'_n$ and $Z'_n$ is computed as follows 
\begin{equation}
g'_n (y,z) = {\dmd{}{2}{y}{}{z}{} G'_n(y,z)} =  g_n(y + nc, z + c)  ,
\end{equation} 
where $g_n(\cdot,\cdot)$ is given by Theorem~\ref{theorem:Joint PDF}. 

Similarly in the discrete case, let $\{\widehat{X}'_i\}_{i \in \N}$ be a collection of independent, not necessarily identically distributed, \emph{integer-valued} random variables such that $\widehat{X}'_i \geq -\lambda$, for all $i \in \N$ and for some fixed $\lambda \in \N$, with PMFs $\{\widehat{f}'_i (k)\}_{i \in \N}$. The variable transformation $\widehat{X}_i \defeq \widehat{X}'_i + \lambda$, for all $i \in \N$, implies that every $\widehat{X}_i \in \N_0$ and its PMF is $\widehat{f}_i(k) = \widehat{f}'_i (k-\lambda)$. Therefore, the joint CDF of the sum $\widehat{Y}'_n \defeq \sum_{i=1}^n {\widehat{X}'_i}$ and the maximum $\widehat{Z}'_n \defeq \max_{1 \leq i \leq n} {\widehat{X}'_i}$ is expressed as  
\begin{equation}
\widehat{G}'_n (y,z) \defeq \Pr (\widehat{Y}'_n \leq y,\widehat{Z}'_n \leq z) = \Pr (\widehat{Y}_n \leq y + n\lambda,\widehat{Z}_n \leq z + \lambda) = \widehat{G}_n(y + n\lambda, z + \lambda)  ,
\end{equation}
where $\widehat{Y}_n \defeq \sum_{i=1}^n {\widehat{X}_i} = \widehat{Y}'_n + n\lambda$, $\widehat{Z}_n \defeq \max_{1 \leq i \leq n} {\widehat{X}_i} = \widehat{Z}'_n + \lambda$, and $\widehat{G}_n(\cdot,\cdot)$ is given by Theorem~\ref{theorem:Recurrence relation of joint CDF_discrete}. In addition, the joint PMF of $\widehat{Y}'_n$ and $\widehat{Z}'_n$ is calculated as follows 
\begin{equation}
\widehat{g}'_n (l,m) \defeq \Pr (\widehat{Y}'_n = l,\widehat{Z}'_n = m) = \Pr (\widehat{Y}_n = l + n\lambda,\widehat{Z}_n = m + \lambda) = \widehat{g}_n (l + n\lambda, m + \lambda) ,
\end{equation}
where $\widehat{g}_n(\cdot,\cdot)$ is given by Theorem~\ref{theorem:Joint PMF}.

\subsection{Continuous Random Variables with Discontinuous PDFs}

We consider a collection $\{X_i\}_{i \in \N}$ of independent, not necessarily identically distributed, nonnegative continuous random variables. Now, suppose that each PDF $f_i(x)$ is \emph{bounded and piecewise continuous} on $\R_+$, i.e., $(0 \leq)\, f_i(x) \leq c_{f_i} < \infty$ for some real constant $c_{f_i} > 0$, and $f_i(x)$ has a finite number of discontinuities in $\R_+$. Therefore, $f_i(x)$ is continuous almost everywhere on $\R_+$.  

Firstly, Theorem~\ref{theorem:Recurrence relation and continuity of joint CDF} is still valid. Now, the only difference in its proof is about the continuity of $G_n(y,z)$ in the inductive step: if $G_{k-1}(y,z)$ is continuous on $\R_+^2$ then so is $G_k (y,z)$, because the product of a continuous and an almost-everywhere continuous function is continuous almost everywhere, and the integral of any almost-everywhere continuous function is continuous.   

Secondly, \eqref{equation:recurrence relation of joint PDF}~and~\eqref{equation:initial condition for joint PDF} in Theorem~\ref{theorem:Joint PDF} remain true, however the last sentence about the continuity of $g_n(y,z)$ is \emph{not} valid in general and must be replaced by  \vspace{3mm}
\newline \emph{``Furthermore, $g_n (y,z)$ is continuous almost everywhere and bounded on its domain $\mathcal{D}_n$, and the integral $\int_0^z {g_n (y - z,x) \dif x}$ is bounded on $\mathcal{D}_{n+1}$, for all $n \in \N \setminus \{1\}$.''}  \vspace{3mm}
\newline Based on the original proof of Theorem~\ref{theorem:Joint PDF}, we give a sketch of the proof for the modified version, where only the differences are discussed. 

Basis step: For $n=2$ and using the fundamental theorem of calculus for Lebesgue integrals~\cite[pp.~105-106, Corollary~3.33 and Theorem~3.35]{Folland}, instead of the classical one, we obtain the equation immediately before \eqref{equation:g_2}, which now holds almost everywhere; note that the function $F_1(y-x) f_2(x)$ is almost-everywhere continuous, and so integrable. Hence, we can select the PDF $g_2(y,z)$ to be given by \eqref{equation:g_2}, because any two such PDFs are equal almost everywhere. From the latter equation, $g_2 (y,z)$ is continuous almost everywhere on $\mathcal{D}_2$, since the sum/product of two almost-everywhere continuous functions is continuous almost everywhere. Also, we have $g_2 (y,z) =  f_1(y-z) f_2(z) + f_1(z) f_2(y-z) \leq 2 c_{f_1} c_{f_2} < \infty$, so $g_2 (y,z)$ is bounded on $\mathcal{D}_2$. By following the proof of Lemma~\ref{lemma:Integration property of joint PDF}, we know that $\int_0^z  {g_2(y-z,x) \dif x}  =  \int_0^z {f_2(x) \int_0^z {g_1(y-z-x,w) \dif w} \dif x} \leq c_{f_2} \int_0^z {\int_{y-2z}^{y-z} {g_1(x',w) \dif x'} \dif w} \leq c_{f_2} \int_0^\infty {\int_0^\infty {g_1(x,w) \dif x} \dif w} = c_{f_2} <\infty$, so $\int_0^z  {g_2(y-z,x) \dif x}$ is bounded on $\mathcal{D}_3$. 

Inductive step: Suppose that $g_{k-1}(y,z)$ is continuous almost everywhere and bounded (by a real constant $c_{g_{k-1}} > 0$) on $\mathcal{D}_{k-1}$, and the integral $\int_0^z {g_{k-1} (y - z,x) \dif x}$ is bounded (by a real constant $c_{\int \!\! g_{k-1}} > 0$) on $\mathcal{D}_k$, for some integer $k \geq 3$. Now, we should use Proposition~\ref{proposition:Partial Differentiation Under the Integral Sign_modified version} instead of Proposition~\ref{proposition:Partial Differentiation Under the Integral Sign}, without any continuity assumption. In particular, observe that all the assumptions of Proposition~\ref{proposition:Partial Differentiation Under the Integral Sign_modified version} are satisfied: 
\begin{enumerate}
\item[1)] $0 \leq h(x,y,z)=f_k(x) G_{k-1}(y - x,z) \leq f_k(x)$, since $G_{k-1}(y,z) \leq 1$, and thus $\int_0^z {h(x,y,z) \dif x} \leq \int_0^z {f_k(x) \dif x} \leq 1 < \infty$,    \vspace{2mm}
\item[2)] $0 \leq \pd{}{z} h(x,y,z) = f_k(x) \int_0^{y-x} {g_{k-1} (y',z) \dif y'} \leq c_{g_{k-1}} f_k(x) (y-x) \leq c_{g_{k-1}} f_k(x) y \eqdef \varphi(x,y)$, for $0 \leq x \leq y$, with $\int_0^z \varphi(x,y) \dif x = c_{g_{k-1}} y \int_0^z f_k(x) \dif x \leq c_{g_{k-1}} y < \infty$, and    \vspace{2mm}
\item[3)] $0 \leq {\md{}{2}{y}{}{z}{} h(x,y,z)} = f_k(x) g_{k-1} (y - x,z) \leq c_{g_{k-1}} f_k(x) \eqdef \phi(x)$, with $\int_0^z \phi(x) \dif x = c_{g_{k-1}} \int_0^z f_k(x) \dif x \leq c_{g_{k-1}} < \infty$. 
\end{enumerate}    
As a result, we can choose $g_k(y,z) = f_k(z) \int_0^z {g_{k-1} (y - z,x) \dif x} + \int_0^z {f_k(x) g_{k-1} (y - x,z) \dif x} \leq c_{f_k} c_{\int \!\! g_{k-1}} + c_{g_{k-1}} \int_0^z {f_k(x) \dif x} \leq c_{f_k} c_{\int \!\! g_{k-1}} + c_{g_{k-1}} < \infty$, so $g_k(y,z)$ is bounded on $\mathcal{D}_k$. From the proof of Lemma~\ref{lemma:Integration property of joint PDF}, we obtain  $\int_0^z  {g_k(y-z,x) \dif x}  =  \int_0^z {f_k(x) \int_0^z {g_{k-1}(y-z-x,w) \dif w} \dif x} \leq c_{f_k} \int_0^z {\int_{y-2z}^{y-z} {g_{k-1}(x',w) \dif x'} \dif w} \leq c_{f_k} \int_0^\infty {\int_0^\infty {g_{k-1}(x,w) \dif x} \dif w} = c_{f_k} < \infty$, so $\int_0^z  {g_k(y-z,x) \dif x}$ is bounded on $\mathcal{D}_{k+1}$. Moreover, $g_k(y,z)$ is continuous almost everywhere on $\mathcal{D}_k$, because it is the sum of two almost-everywhere continuous functions. Hence, the claim has been proved.

In summary, ``almost all'' the results of Section~\ref{section:Continuous Random Variables} remain valid, except for the aforementioned slight/minor modification of Theorem~\ref{theorem:Joint PDF}. The case of discontinuous PDFs is a generalization of the case of continuous PDFs (no discontinuities in $\R_+$ at all). This is consistent with the fact that, for the joint PDF $g_n(y,z)$, continuity (everywhere) implies continuity almost everywhere, since the empty set has measure zero by definition.

\subsection{Mixture of Continuous and Discrete Random Variables}

Let $\{\widetilde{X}_i\}_{i \in \N}$ be a mixture of independent, not necessarily identically distributed, nonnegative continuous and discrete random variables (i.e., each $\widetilde{X}_i$ is either continuous or discrete), with PDFs/PMFs $\{\widetilde{f}_i(x)\}_{i \in \N}$ and CDFs $\{\widetilde{F}_i(x)\}_{i \in \N}$. Furthermore, assume that every continuous random variable has a \emph{continuous} (or, \emph{bounded and piecewise continuous}) PDF on $\R_+$, and that each discrete random variable takes \emph{integer values}. Then, the joint CDF $\widetilde{G}_n(y,z) \defeq \Pr (\widetilde{Y}_n \leq y,\widetilde{Z}_n \leq z)$ of the sum $\widetilde{Y}_n \defeq \sum_{i=1}^n {\widetilde{X}_i}$ and the maximum $\widetilde{Z}_n \defeq \max_{1 \leq i \leq n} {\widetilde{X}_i}$ is given by
\begin{equation}
\begin{split}
\widetilde{G}_n(y,z) & = \left\{  
\begin{array}{ll}
      \int_0^{\min(y,z)} {\widetilde{G}_{n-1}(y - x,z) \widetilde{f}_n(x) \dif x}, &  \text{if $\widetilde{X}_n$ is continuous} , \\
      \sum_{k=0}^{\min(\lfloor y \rfloor,\lfloor z \rfloor)} {\widetilde{G}_{n-1}(y - k,z) \widetilde{f}_n(k)}, & \text{if $\widetilde{X}_n$ is discrete} , \\ 
\end{array} 
\right. \\
& \:\; \quad \forall n \in \N \setminus \{1\} , \; \forall (y,z) \in \R_+^2 , 
\end{split}
\end{equation}   
with initial condition 
\begin{equation}
\widetilde{G}_1(y,z) = \widetilde{F}_1 (\min(y,z)) ,  \quad \forall (y,z) \in \R_+^2 .
\end{equation}
In other words, this is a generalization of the pure continuous and discrete case presented in Sections~\ref{section:Continuous Random Variables}~and~\ref{section:Discrete Random Variables}, respectively. Finally, it is worth noting that $\widetilde{G}_n(y,z)$ is \emph{not} continuous on $\R_+^2$, when there exists an index $j \in \{1,\dots,n\}$ such that $\widetilde{X}_j$ is a discrete random variable.


\appendix

\section{Partial Differentiation Under the Integral Sign}
\label{appendix:Partial Differentiation Under the Integral Sign}

In this part of the paper, we provide a generalization of the Leibniz integral rule for partial differentiation of integrals with variable limits. For this purpose, the following two lemmas will be instrumental. 

\begin{lemma}[Leibniz's integral rule with constant limits of integration~{\cite[p.~276, Theorem~7.1]{Lang}}]
\label{lemma:Leibniz's integral rule_constant limits}
Let $f(x,t)$ and $\pd{}{t} f(x,t)$ be defined and continuous on $\mathcal{X} \times \mathcal{T}$, where $\mathcal{X} \defeq [x_1,x_2]$ and $\mathcal{T} \defeq [t_1,t_2]$ are subsets of $\R$ with $-\infty < x_1 \leq x_2 < \infty$ and $-\infty < t_1 < t_2 < \infty$. Moreover, let $a,b \in \mathcal{X}$ be two constants. Then, for all $t \in \mathcal{T}$, 
\begin{equation} \label{equation:Leibniz's integral rule_constant limits}
\dod{}{t} \left(  \int_{a}^{b} f(x,t) \dif x \right)  = {\int_{a}^{b} \dpd{}{t} f(x,t) \dif x}  .
\end{equation} 
\end{lemma}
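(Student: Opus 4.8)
The plan is to fix an arbitrary point $t_0 \in \mathcal{T}$ and work directly from the definition of the derivative of $\phi(t) \defeq \int_a^b f(x,t) \dif x$, rather than invoking any abstract convergence theorem. Using linearity of the integral, I would rewrite the relevant difference as a single integral,
\[
\frac{\phi(t) - \phi(t_0)}{t - t_0} - \int_a^b \pd{}{t} f(x,t_0) \dif x = \int_a^b \left[ \frac{f(x,t) - f(x,t_0)}{t - t_0} - \pd{}{t} f(x,t_0) \right] \dif x ,
\]
so that the entire lemma reduces to showing that this right-hand side tends to $0$ as $t \to t_0$. Establishing this for every $t_0 \in \mathcal{T}$ yields \eqref{equation:Leibniz's integral rule_constant limits} for all $t \in \mathcal{T}$.

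First I would apply the one-variable mean value theorem in the $t$-variable to the map $t \mapsto f(x,t)$ for each fixed $x \in \mathcal{X}$; this is legitimate because $\pd{}{t} f(x,t)$ exists on all of $\mathcal{T}$ by hypothesis. It furnishes, for each $x$ and each $t \neq t_0$, an intermediate point $\xi_x$ between $t_0$ and $t$ with $\frac{f(x,t) - f(x,t_0)}{t - t_0} = \pd{}{t} f(x,\xi_x)$, so the bracketed integrand equals $\pd{}{t} f(x,\xi_x) - \pd{}{t} f(x,t_0)$, and crucially $\abs{\xi_x - t_0} \leq \abs{t - t_0}$ independently of $x$. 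The key step is then to upgrade the pointwise vanishing of this integrand to a uniform estimate: since $\pd{}{t} f$ is continuous on the \emph{compact} rectangle $\mathcal{X} \times \mathcal{T}$, it is uniformly continuous there, so given $\eps > 0$ there is $\delta > 0$ with $\abs{\pd{}{t} f(x,s) - \pd{}{t} f(x,s')} < \eps$ whenever $\abs{s - s'} < \delta$, uniformly over $x \in \mathcal{X}$. Taking $\abs{t - t_0} < \delta$ forces $\abs{\xi_x - t_0} < \delta$ for every $x$, bounds the integrand by $\eps$, and hence bounds the whole integral by $\eps \abs{b - a}$; letting $\eps \to 0$ completes the argument.

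I expect the main obstacle to be precisely the interchange of limit and integral, i.e.\ passing the pointwise limit inside $\int_a^b$, and this is exactly where compactness of $\mathcal{X} \times \mathcal{T}$ (hence uniform continuity of $\pd{}{t} f$) does the essential work; it also explains why \emph{continuity} of the partial derivative, not merely its existence, is assumed. As an alternative route I would keep in reserve a mean-value-theorem-free argument: set $g(t) \defeq \int_a^b \pd{}{t} f(x,t) \dif x$, apply Fubini's theorem on the compact rectangle to interchange the order of integration in $\int_{t_1}^t g(s) \dif s$, invoke the fundamental theorem of calculus in $s$ to recover $\phi(t) - \phi(t_1)$, and then differentiate using the continuity of $g$. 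Since both approaches rest on the same compactness, I would present the mean-value-theorem version as the more self-contained one.
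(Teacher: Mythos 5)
Your argument is correct and complete: the reduction to a single integral, the mean value theorem in the $t$-variable, and the uniform continuity of $\pd{}{t} f$ on the compact rectangle $\mathcal{X} \times \mathcal{T}$ together give exactly the uniform bound $\eps \abs{b-a}$ needed to pass the limit inside the integral. The paper itself states this lemma without proof, citing Lang's \emph{Undergraduate Analysis}, and the proof given there is essentially the one you propose, so there is no substantive difference to report; your Fubini-plus-fundamental-theorem alternative is an equally valid standard route.
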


\begin{lemma}[Leibniz's integral rule with variable limits of integration] 
\label{lemma:Leibniz's integral rule_variable limits}
Let $f(x,t)$ and $\pd{}{t} f(x,t)$ be defined and continuous on $\mathcal{X} \times \mathcal{T}$, where $\mathcal{X} \defeq [x_1,x_2]$ and $\mathcal{T} \defeq [t_1,t_2]$ are subsets of $\R$ with $-\infty < x_1 \leq x_2 < \infty$ and $-\infty < t_1 < t_2 < \infty$. In addition, let $a, b : \mathcal{T} \to \mathcal{X}$ be differentiable functions on $\mathcal{T}$. Then, for all $t \in \mathcal{T}$,
\begin{equation} \label{equation:Leibniz's integral rule_variable limits}
\dod{}{t} \left( \int_{a(t)}^{b(t)} f(x,t) \dif x \right) = {\eval[2]{f(x,t)}_{x=b(t)} \cdot \dod{}{t}} b(t) - {\eval[2]{f(x,t)}_{x=a(t)} \cdot \dod{}{t}} a(t) + {\int_{a(t)}^{b(t)} \dpd{}{t} f(x,t) \dif x} .
\end{equation}
\end{lemma}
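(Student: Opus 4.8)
The plan is to reduce the variable-limit case to the constant-limit rule (Lemma~\ref{lemma:Leibniz's integral rule_constant limits}) together with the fundamental theorem of calculus, via the multivariable chain rule applied to a suitable auxiliary function of three variables. Concretely, I would introduce
\[
\Phi(u,v,t) \defeq \int_u^v f(x,t) \dif x , \qquad (u,v,t) \in \mathcal{X} \times \mathcal{X} \times \mathcal{T} ,
\]
and observe that, since $a(t), b(t) \in \mathcal{X}$ for all $t \in \mathcal{T}$ by hypothesis, the left-hand side of \eqref{equation:Leibniz's integral rule_variable limits} is precisely $\dod{}{t} \Phi(a(t),b(t),t)$.

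Next I would compute the three first-order partial derivatives of $\Phi$. Holding $u$ and $t$ fixed and differentiating with respect to the upper limit gives $\dpd{}{v} \Phi(u,v,t) = f(v,t)$ by the fundamental theorem of calculus; likewise, differentiating with respect to the lower limit yields $\dpd{}{u} \Phi(u,v,t) = -f(u,t)$. For the derivative in $t$, with $u$ and $v$ held fixed the limits of integration are constant, so Lemma~\ref{lemma:Leibniz's integral rule_constant limits} is directly applicable and gives $\dpd{}{t} \Phi(u,v,t) = \int_u^v \dpd{}{t} f(x,t) \dif x$.

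The crucial step, and the main technical obstacle, is to justify the chain rule, which requires $\Phi$ to be \emph{totally} differentiable rather than merely to possess partial derivatives. For this I would verify that all three partials above are continuous on $\mathcal{X} \times \mathcal{X} \times \mathcal{T}$: the first two inherit continuity directly from that of $f$, whereas the continuity of $(u,v,t) \mapsto \int_u^v \dpd{}{t} f(x,t) \dif x$ follows from the boundedness and uniform continuity of $\dpd{}{t} f$ on the compact set $\mathcal{X} \times \mathcal{T}$, via a standard $\varepsilon$-estimate that separates the dependence on the endpoints $u,v$ from the dependence on $t$. Continuity of all first-order partials implies $\Phi \in C^1$ (understood one-sidedly at the boundary of the product of closed intervals), hence $\Phi$ is totally differentiable.

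Finally, applying the multivariable chain rule and substituting $u = a(t)$, $v = b(t)$,
\[
\dod{}{t} \Phi(a(t),b(t),t) = \dpd{}{u} \Phi \cdot \dod{}{t} a(t) + \dpd{}{v} \Phi \cdot \dod{}{t} b(t) + \dpd{}{t} \Phi ,
\]
with the partials evaluated at $(a(t),b(t),t)$, reproduces exactly the right-hand side of \eqref{equation:Leibniz's integral rule_variable limits}, which completes the proof.
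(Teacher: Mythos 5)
Your proposal is correct and follows essentially the same route as the paper: define an auxiliary function of the two limits and the parameter, compute its partials via the fundamental theorem of calculus and Lemma~\ref{lemma:Leibniz's integral rule_constant limits}, and conclude by the multivariable chain rule. The only difference is that you explicitly verify continuity of the partials to justify total differentiability before invoking the chain rule, a point the paper's proof leaves implicit.
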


\begin{proof} 
This is a consequence of the fundamental theorem of calculus, the basic form of Leibniz's integral rule (Lemma~\ref{lemma:Leibniz's integral rule_constant limits}), and the chain rule for multivariate functions. Let us define the function $I(u,v,t) \defeq \int_v^u {f(x,t) \dif x}$, for all $(u,v,t) \in \mathcal{X}^2 \times \mathcal{T}$. From the fundamental theorem of calculus, we have $\pd{}{u} I(u,v,t) = f(u,t)$ and $\pd{}{v} I(u,v,t) = - f(v,t)$, while $\pd{}{t} I(u,v,t) = \int_v^u {\pd{}{t} f(x,t) \dif x}$ due to Lemma~\ref{lemma:Leibniz's integral rule_constant limits}. Now, by applying the chain rule, we obtain 
\[
\begin{split}
\dod{}{t} \left( \int_{a(t)}^{b(t)} f(x,t) \dif x \right) & = \dod{}{t} I(b(t),a(t),t) = \eval[3]{\dpd{}{u} I(u,v,t)}_{u=b(t),v=a(t)} \cdot \dod{}{t} b(t)    \\
& \quad  +  {\eval[3]{\dpd{}{v} I(u,v,t)}_{u=b(t),v=a(t)} \cdot \dod{}{t} a(t)} + {\eval[3]{\dpd{}{t} I(u,v,t)}_{u=b(t),v=a(t)} \cdot \dod{}{t} t}    \\
& = {\eval[2]{f(u,t)}_{u=b(t)} \cdot \dod{}{t}} b(t) - {\eval[2]{f(v,t)}_{v=a(t)} \cdot \dod{}{t}} a(t) + {\int_{a(t)}^{b(t)} \dpd{}{t} f(x,t) \dif x}  ,
\end{split}
\]
for all $t \in \mathcal{T}$.  
\end{proof}

\begin{proposition} \label{proposition:Partial Differentiation Under the Integral Sign}
Let $h(x,y,z)$, $\pd{}{z} h(x,y,z)$, and $\md{}{2}{y}{}{z}{} h(x,y,z)$ be defined and continuous on $\mathcal{S} \defeq \mathcal{X} \times \mathcal{Y} \times \mathcal{Z}$, where $\mathcal{X} \defeq [x_1,x_2]$, $\mathcal{Y} \defeq [y_1,y_2]$, and $\mathcal{Z} \defeq [z_1,z_2]$ are subsets of $\R$ with $-\infty < x_1 \leq x_2 < \infty$, $-\infty < y_1 < y_2 < \infty$, and $-\infty < z_1 < z_2 < \infty$. Furthermore, suppose that $\pd{}{y} h(x,y,z)$ exists on $\mathcal{S}$, and  $a, b : \mathcal{Z} \to \mathcal{X}$ are differentiable functions on $\mathcal{Z}$. Then, for~all~$(y,z) \in \mathcal{Y} \times \mathcal{Z}$,  
\begin{equation} \label{equation:Partial Differentiation Under the Integral Sign}
\begin{split}
{\dmd{}{2}{y}{}{z}{} \left( \int_{a(z)}^{b(z)} h(x,y,z) \dif x \right)} & = {\eval[3]{\dpd{}{y} h(x,y,z)}_{x=b(z)} \cdot \dod{}{z}} b(z) - {\eval[3]{\dpd{}{y} h(x,y,z)}_{x=a(z)} \cdot \dod{}{z} a(z)}     \\
& \quad + {\int_{a(z)}^{b(z)} \dmd{}{2}{y}{}{z}{} h(x,y,z) \dif x}  .
\end{split}
\end{equation}
\end{proposition}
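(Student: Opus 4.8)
The plan is to obtain the mixed derivative by iterating the two Leibniz rules just established, differentiating \emph{first} in $z$ (where the limits vary) and only afterwards in $y$ (where the limits are constant). The order is crucial: since we are \emph{not} assuming continuity of $\pd{}{y} h$, we cannot legitimately pull $\pd{}{y}$ inside the integral as a first step. Working in the order $z$-then-$y$ avoids this, because each time a derivative is pulled through the integral we only ever invoke the continuity of $h$, $\pd{}{z} h$, and $\md{}{2}{y}{}{z}{} h$, all of which are hypothesized.

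First I would fix $y \in \mathcal{Y}$ and apply Lemma~\ref{lemma:Leibniz's integral rule_variable limits} to $f(x,z) \defeq h(x,y,z)$ with $t=z$. Since $h$ and $\pd{}{z} h$ are continuous on $\mathcal{X}\times\mathcal{Z}$ and $a,b$ are differentiable on $\mathcal{Z}$, this yields
\[
\dpd{}{z} \left( \int_{a(z)}^{b(z)} h(x,y,z) \dif x \right) = h(b(z),y,z) \dod{}{z} b(z) - h(a(z),y,z) \dod{}{z} a(z) + \int_{a(z)}^{b(z)} \dpd{}{z} h(x,y,z) \dif x .
\]
Next I would differentiate this identity with respect to $y$, term by term. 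The two boundary contributions have the form $h(b(z),y,z)\,b'(z)$ and $h(a(z),y,z)\,a'(z)$, in which $z$ (and hence $b(z),a(z),b'(z),a'(z)$) is held fixed while $y$ varies; their $y$-derivatives are $\pd{}{y} h(b(z),y,z)\,b'(z)$ and $\pd{}{y} h(a(z),y,z)\,a'(z)$, and these exist \emph{precisely} because $\pd{}{y} h$ is assumed to exist on $\mathcal{S}$, with no integral interchange required. For the remaining integral term the limits $a(z),b(z)$ do not depend on $y$, so I would invoke Lemma~\ref{lemma:Leibniz's integral rule_constant limits} with $f(x,y) \defeq \pd{}{z} h(x,y,z)$ and $t=y$; its hypotheses hold because $\pd{}{z} h$ and $\pd{}{y}\bigl( \pd{}{z} h \bigr) = \md{}{2}{y}{}{z}{} h$ are both continuous, giving
\[
\dpd{}{y} \int_{a(z)}^{b(z)} \dpd{}{z} h(x,y,z) \dif x = \int_{a(z)}^{b(z)} \dmd{}{2}{y}{}{z}{} h(x,y,z) \dif x .
\]
Collecting the three contributions reproduces \eqref{equation:Partial Differentiation Under the Integral Sign} for all $(y,z)\in\mathcal{Y}\times\mathcal{Z}$.

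The main obstacle is the asymmetry of the hypotheses: continuity is assumed for $h$, $\pd{}{z} h$, and $\md{}{2}{y}{}{z}{} h$, but for $\pd{}{y} h$ only existence is guaranteed. This is exactly why the differentiation order cannot be reversed, since differentiating in $y$ first would require interchanging $\pd{}{y}$ with the integral sign without knowing that $\pd{}{y} h$ is continuous. I would stress in the write-up that existence of $\pd{}{y} h$ is used \emph{only} to differentiate the two boundary terms, whereas every interchange of a derivative with the integral is justified by a genuine continuity hypothesis. A minor point worth checking is that the convention $\md{}{2}{y}{}{z}{} = \pd{}{y}\pd{}{z}$ (inner variable $z$ differentiated first) matches the order carried out here, which it does.
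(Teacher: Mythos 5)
Your proposal is correct and follows essentially the same route as the paper's own proof: apply the variable-limits Leibniz rule in $z$ first, then differentiate the resulting identity in $y$, handling the boundary terms directly via the assumed existence of $\pd{}{y} h$ and the remaining integral via the constant-limits Leibniz rule. Your additional remarks on why the order of differentiation is forced by the asymmetric hypotheses are a nice clarification but do not change the argument.
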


\begin{proof}
Firstly, we will compute the partial derivative of $\int_{a(z)}^{b(z)} h(x,y,z) \dif x$ with respect to the variable $z$. By virtue of Lemma~\ref{lemma:Leibniz's integral rule_variable limits} (extended to functions of three variables), we obtain  
\[  
\begin{split}
{\dpd{}{z} \left( \int_{a(z)}^{b(z)} h(x,y,z) \dif x \right)}  & = {\eval[2]{h(x,y,z)}_{x=b(z)} \cdot \dod{}{z}} b(z) - {\eval[2]{h(x,y,z)}_{x=a(z)} \cdot \dod{}{z} a(z)}     \\
& \quad + {\int_{a(z)}^{b(z)} \dpd{}{z} h(x,y,z) \dif x}   ,
\end{split}
\]
for all $(y,z) \in \mathcal{Y} \times \mathcal{Z}$. 

Secondly, by differentiating the previous equation with respect to the variable $y$, the second-order mixed partial derivative of $\int_{a(z)}^{b(z)} h(x,y,z) \dif x$ is given by  
\[
\begin{split}
{\dmd{}{2}{y}{}{z}{} \left( \int_{a(z)}^{b(z)} h(x,y,z) \dif x \right)} & = {\eval[3]{\dpd{}{y} h(x,y,z)}_{x=b(z)} \cdot \dod{}{z}} b(z) - {\eval[3]{\dpd{}{y} h(x,y,z)}_{x=a(z)} \cdot \dod{}{z} a(z)}     \\
& \quad + {\dpd{}{y} \left( {\int_{a(z)}^{b(z)} \dpd{}{z} h(x,y,z) \dif x} \right)}  .
\end{split}
\]

Finally, we conclude the proof by applying Lemma~\ref{lemma:Leibniz's integral rule_constant limits} (extended to functions of three variables) to the last integral.  
\end{proof}

\begin{remark} \label{remark:Uniform convergence assumption}
Proposition~\ref{proposition:Partial Differentiation Under the Integral Sign} remains valid if we replace the assumption that $\md{}{2}{y}{}{z}{} h(x,y,z)$ is continuous on $\mathcal{S}$ with the assumption that ${\int_{a(z)}^{b(z)} \md{}{2}{y}{}{z}{} h(x,y,z) \dif x}$ converges uniformly on $\mathcal{Y} \times \mathcal{Z}$. This is because Lemma~\ref{lemma:Leibniz's integral rule_constant limits} still holds when the continuity of $\pd{}{t} f(x,t)$ on $\mathcal{X} \times \mathcal{T}$ is replaced by the uniform convergence of $\int_{a}^{b} \pd{}{t} {f(x,t) \dif x}$ on $\mathcal{T}$; see \cite{Talvila}~and~\cite[p.~260]{Rogers}. 
\end{remark}

Next, we give modified versions of Lemmas~\ref{lemma:Leibniz's integral rule_constant limits},~\ref{lemma:Leibniz's integral rule_variable limits} and Proposition~\ref{proposition:Partial Differentiation Under the Integral Sign} by relaxing the continuity assumptions (some results now hold \emph{almost everywhere}). Note that the new conditions are weaker than continuity, because if a real-valued function is continuous on a compact set, then it is bounded and so integrable; in addition, there always exist constant functions $\vartheta(x)$,  $\varphi(x,y)$ and $\phi(x)$ satisfying the assumptions in the following results.

\begin{lemma}[Leibniz's integral rule with constant limits of integration -- modified version {\cite[p.~56, Theorem~2.27]{Folland}}]
\label{lemma:Leibniz's integral rule_constant limits_modified version}
Let $f(x,t)$ and $\pd{}{t} f(x,t)$ be defined on $\mathcal{X} \times \mathcal{T}$, where $\mathcal{X} \defeq [x_1,x_2]$ and $\mathcal{T} \defeq [t_1,t_2]$ are subsets of $\R$ with $-\infty < x_1 \leq x_2 < \infty$ and $-\infty < t_1 < t_2 < \infty$. Let $a,b \in \mathcal{X}$ be two constants. Moreover, assume that: 1) the function $f(x,t)$ is integrable, i.e., $\int_{a}^{b} \left| f(x,t) \right| \dif x < \infty$, for all $t \in \mathcal{T}$, and 2) there exists a function $\vartheta(x)$ such that $\left| \pd{}{t} f(x,t) \right| \leq \vartheta(x)$ for all $(x,t) \in \mathcal{X} \times \mathcal{T}$, and $\int_{a}^{b} \vartheta(x) \dif x < \infty$. Then, \eqref{equation:Leibniz's integral rule_constant limits} holds for all $t \in \mathcal{T}$.  
\end{lemma}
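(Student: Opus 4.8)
The plan is to obtain this as a direct consequence of the Dominated Convergence Theorem (DCT) applied to difference quotients, which is precisely the mechanism behind the cited result \cite[Theorem~2.27]{Folland}. First I would fix an arbitrary $t \in \mathcal{T}$ together with an arbitrary sequence $(h_n)_{n \in \N}$ of nonzero reals satisfying $h_n \to 0$ and $t + h_n \in \mathcal{T}$ for all $n$. Setting $\Phi(t) \defeq \int_a^b {f(x,t) \dif x}$ (well defined and finite by assumption~1), I would write the difference quotient and move it inside the integral, which is legitimate by linearity:
\[
\frac{\Phi(t+h_n) - \Phi(t)}{h_n} = \int_a^b {q_n(x) \dif x} , \qquad q_n(x) \defeq \frac{f(x,t+h_n) - f(x,t)}{h_n} .
\]

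Next I would establish the two hypotheses needed for the DCT. Each $q_n$ is measurable, being a scalar multiple of a difference of the integrable (hence measurable) functions $f(\cdot,t+h_n)$ and $f(\cdot,t)$. For the pointwise domination, note that for every fixed $x$ the map $\tau \mapsto f(x,\tau)$ is differentiable on $\mathcal{T}$ (since $\pd{}{\tau} f(x,\tau)$ exists there), hence continuous, so the mean value theorem furnishes a point $\xi_n(x)$ strictly between $t$ and $t+h_n$ with
\[
q_n(x) = \pd{}{t} f(x,t) \Big|_{t = \xi_n(x)} .
\]
Assumption~2 then yields $\abs{q_n(x)} \leq \vartheta(x)$ for all $n$ and all $x$, where $\vartheta$ is integrable on $[a,b]$. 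Finally, by the very definition of the partial derivative, $q_n(x) \to \pd{}{t} f(x,t)$ as $n \to \infty$ for every $x$.

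With a single integrable majorant $\vartheta$ and pointwise convergence in hand, the DCT gives $\lim_{n \to \infty} \int_a^b {q_n(x) \dif x} = \int_a^b {\pd{}{t} f(x,t) \dif x}$. Since the sequence $(h_n)$ was arbitrary, the sequential characterization of the derivative shows that $\Phi'(t)$ exists and equals $\int_a^b {\pd{}{t} f(x,t) \dif x}$, which is exactly \eqref{equation:Leibniz's integral rule_constant limits}. I expect the only genuinely delicate step to be the construction of the dominating function: assumption~2 is tailored so that the mean value theorem converts the uniform bound on $\pd{}{t} f$ into a bound on every difference quotient $q_n$ simultaneously, thereby supplying the majorant that the DCT demands; without such a uniform bound, one cannot pass the limit inside the integral.
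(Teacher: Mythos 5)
Your proof is correct: the paper gives no proof of this lemma, deferring entirely to the citation of Folland's Theorem~2.27, and your argument (difference quotients, the mean value theorem to convert the uniform bound on $\pd{}{t} f$ into the integrable majorant $\vartheta$, then the dominated convergence theorem along an arbitrary sequence $h_n \to 0$) is precisely the standard proof of that cited result. No gaps.
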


\begin{lemma}[Leibniz's integral rule with variable limits of integration -- modified version] 
\label{lemma:Leibniz's integral rule_variable limits_modified version}
Let $f(x,t)$ and $\pd{}{t} f(x,t)$ be defined on $\mathcal{X} \times \mathcal{T}$, where $\mathcal{X} \defeq [x_1,x_2]$ and $\mathcal{T} \defeq [t_1,t_2]$ are subsets of $\R$ with $-\infty < x_1 \leq x_2 < \infty$ and $-\infty < t_1 < t_2 < \infty$. Let $a, b : \mathcal{T} \to \mathcal{X}$ be differentiable functions on $\mathcal{T}$. In addition, assume that: 1) the function $f(x,t)$ is integrable, i.e., $\int_{a(t)}^{b(t)} \left| f(x,t) \right| \dif x < \infty$, for all $t \in \mathcal{T}$, and 2) there exists a function $\vartheta(x)$ such that $\left| \pd{}{t} f(x,t) \right| \leq \vartheta(x)$ for all $(x,t) \in \mathcal{X} \times \mathcal{T}$, and $\int_{a(t)}^{b(t)} \vartheta(x) \dif x < \infty$ for all $t \in \mathcal{T}$. Then, \eqref{equation:Leibniz's integral rule_variable limits} holds for almost all $t \in \mathcal{T}$. 
\end{lemma}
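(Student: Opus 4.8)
The plan is to follow the argument for the continuous-case Lemma~\ref{lemma:Leibniz's integral rule_variable limits} almost verbatim, substituting the modified constant-limits rule for the classical one and the Lebesgue form of the fundamental theorem of calculus for the classical form. As there, I would introduce the auxiliary function $I(u,v,t) \defeq \int_v^u f(x,t)\,\dif x$ on $\mathcal{X}^2 \times \mathcal{T}$ and record its three partial derivatives. The $t$-derivative $\pd{}{t} I(u,v,t) = \int_v^u \pd{}{t} f(x,t)\,\dif x$ now holds for \emph{every} $t$ by Lemma~\ref{lemma:Leibniz's integral rule_constant limits_modified version}, whose two hypotheses are exactly the integrability of $f(\cdot,t)$ and the domination $\left| \pd{}{t} f \right| \le \vartheta$ assumed here. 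The endpoint derivatives $\pd{}{u} I = f(u,t)$ and $\pd{}{v} I = -f(v,t)$, however, can only be asserted for almost every $u$ (resp.\ $v$), since $f(\cdot,t)$ is merely integrable; this is the single place where ``almost everywhere'' is forced into the conclusion. Assembling these via the chain rule applied to $\Phi(t)\defeq I(b(t),a(t),t)$ yields \eqref{equation:Leibniz's integral rule_variable limits}.

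To make the chain-rule step legitimate I would first extract the regularity that the domination hypothesis secretly provides. For each fixed $x$ the bound $\left| \pd{}{t} f(x,t) \right| \le \vartheta(x)$ makes $f(x,\cdot)$ Lipschitz in $t$ with constant $\vartheta(x)$, so $\left| f(x,t+h) - f(x,t) \right| \le \vartheta(x)\left| h \right|$; since $\int \vartheta < \infty$, the map $t \mapsto f(\cdot,t)$ is therefore $L^1$-continuous and $P(s,t)\defeq\int_{x_1}^s f(x,t)\,\dif x$ is jointly continuous, absolutely continuous in $s$ for each $t$, and Lipschitz in $t$ uniformly in $s$ (with constant $\int_{\mathcal{X}}\vartheta$). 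Consequently $\Phi$ is absolutely continuous on the compact interval $\mathcal{T}$, hence differentiable for almost every $t$, which is what licenses differentiating it term by term.

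The main obstacle --- and where I would concentrate the care --- is that the identity $\pd{}{u}I(u,v,t)=f(u,t)$ must be used at the \emph{moving} point $u=b(t)$, whereas the Lebesgue fundamental theorem only grants it off a $u$-null set that may depend on $t$. I would circumvent this through the increment
\[
\Phi(t+h)-\Phi(t) = \int_{b(t)}^{b(t+h)} f(x,t)\,\dif x - \int_{a(t)}^{a(t+h)} f(x,t)\,\dif x + \int_{a(t)}^{b(t)} \bigl( f(x,t+h)-f(x,t) \bigr)\,\dif x + R(h),
\]
where $R(h)$ gathers the products of endpoint increments with $t$-increments. The third integral, divided by $h$, tends to $\int_{a(t)}^{b(t)} \pd{}{t} f\,\dif x$ by dominated convergence (dominant $\vartheta$); the term $R(h)/h$ vanishes because $\left| f(x,t+h)-f(x,t) \right|\le \vartheta(x)\left| h \right|$ supplies an extra factor of $h$; and the endpoint term obeys $\tfrac{1}{h}\int_{b(t)}^{b(t+h)} f(x,t)\,\dif x = \tfrac{P(b(t+h),t)-P(b(t),t)}{b(t+h)-b(t)}\cdot\tfrac{b(t+h)-b(t)}{h}$, whose second factor converges to $b'(t)$ and whose first factor converges to $f(b(t),t)$ precisely when $b(t)$ is a differentiability point of $P(\cdot,t)$. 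The delicate part is verifying that the set of $t$ for which $b(t)$ (or $a(t)$) falls into the bad null set, or for which $a,b$ fail to be differentiable, is itself Lebesgue-null; granting this, the formula holds for almost every $t$, exactly as claimed.
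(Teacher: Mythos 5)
Your proposal takes essentially the same route as the paper's proof: define $I(u,v,t) \defeq \int_v^u f(x,t)\,\dif x$, obtain $\pd{}{t} I(u,v,t) = \int_v^u \pd{}{t} f(x,t)\,\dif x$ from Lemma~\ref{lemma:Leibniz's integral rule_constant limits_modified version} (and you correctly check that its hypotheses are exactly assumptions 1) and 2)), obtain $\pd{}{u} I(u,v,t) = f(u,t)$ and $\pd{}{v} I(u,v,t) = -f(v,t)$ only almost everywhere from the fundamental theorem of calculus for Lebesgue integrals, and assemble the pieces with the chain rule; the paper's proof is exactly this, stated in three sentences. Where you add value is in noticing the point the paper glosses over: the FTC exceptional set is null in the $u$-variable for each fixed $t$, and it does not follow formally that the set of $t$ for which $b(t)$ lands in that set is null in $t$. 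Your difference-quotient decomposition of $\Phi(t) = I(b(t),a(t),t)$, the Lipschitz estimate $\left| f(x,t+h)-f(x,t) \right| \leq \vartheta(x)\left| h \right|$ extracted from assumption 2), and the dominated-convergence treatment of the interior term and of $R(h)/h$ are all correct and go beyond what the paper writes down. The step you explicitly leave open (``granting this'') is indeed the one genuinely delicate point, but it is equally unresolved in the paper's own proof, so you have not introduced a gap relative to the source --- you have exposed one. (It is harmless where the lemma is actually applied in Section~\ref{section:Extensions and Generalizations}: there $a(z)=0$, $b(z)=z$, and the integrand is bounded and piecewise continuous in $x$, so the exceptional set is a fixed finite set and its preimage under the identity is null.)
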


\begin{proof}
This result follows from similar steps as in the proof of Lemma~\ref{lemma:Leibniz's integral rule_variable limits}, but exploiting the fundamental theorem of calculus for Lebesgue integrals~\cite[pp.~105-106, Corollary~3.33 and Theorem~3.35]{Folland} (instead of the classical one) as well as Lemma~\ref{lemma:Leibniz's integral rule_constant limits_modified version} (instead of Lemma~\ref{lemma:Leibniz's integral rule_constant limits}). In particular, the fundamental theorem of calculus for Lebesgue integrals implies that $\displaystyle{\pd{}{u} I(u,v,t) \mathop{=}^{\text{a.e.}} f(u,t)}$ and $\displaystyle{\pd{}{v} I(u,v,t) \mathop{=}^{\text{a.e.}} - f(v,t)}$. Therefore, \eqref{equation:Leibniz's integral rule_variable limits} is true for almost every $t \in \mathcal{T}$.  
\end{proof}

\begin{proposition} \label{proposition:Partial Differentiation Under the Integral Sign_modified version}
Let $h(x,y,z)$, $\pd{}{z} h(x,y,z)$, and $\md{}{2}{y}{}{z}{} h(x,y,z)$ be defined on $\mathcal{S} \defeq \mathcal{X} \times \mathcal{Y} \times \mathcal{Z}$, where $\mathcal{X} \defeq [x_1,x_2]$, $\mathcal{Y} \defeq [y_1,y_2]$, and $\mathcal{Z} \defeq [z_1,z_2]$ are subsets of $\R$ with $-\infty < x_1 \leq x_2 < \infty$, $-\infty < y_1 < y_2 < \infty$, and $-\infty < z_1 < z_2 < \infty$. Suppose that $\pd{}{y} h(x,y,z)$ exists on $\mathcal{S}$, and  $a, b : \mathcal{Z} \to \mathcal{X}$ are differentiable functions on $\mathcal{Z}$. Furthermore, assume that: 1) the function $h(x,y,z)$ is integrable, i.e., $\int_{a(z)}^{b(z)} \left| h(x,y,z) \right| \dif x < \infty$, for all $(y,z) \in \mathcal{Y} \times \mathcal{Z}$, 2) there exists a function $\varphi(x,y)$ such that $\left| \pd{}{z} h(x,y,z) \right| \leq \varphi(x,y)$ for all $(x,y,z) \in \mathcal{S}$, and $\int_{a(z)}^{b(z)} \varphi(x,y) \dif x < \infty$ for all $(y,z) \in \mathcal{Y} \times \mathcal{Z}$, and 3) there is a function $\phi(x)$ such that $\left| \md{}{2}{y}{}{z}{} h(x,y,z) \right| \leq \phi(x)$ for all $(x,y,z) \in \mathcal{S}$, and $\int_{a(z)}^{b(z)} \phi(x) \dif x < \infty$ for all $z \in \mathcal{Z}$. Then, \eqref{equation:Partial Differentiation Under the Integral Sign} holds for almost all $(y,z) \in \mathcal{Y} \times \mathcal{Z}$. 
\end{proposition}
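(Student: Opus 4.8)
The plan is to follow the proof of Proposition~\ref{proposition:Partial Differentiation Under the Integral Sign} verbatim in its two-step structure, replacing each appeal to a continuity-based Leibniz rule by its measure-theoretic counterpart. Writing $\Phi(y,z) \defeq \int_{a(z)}^{b(z)} h(x,y,z)\,\dif x$, I would first differentiate $\Phi$ with respect to $z$ (holding $y$ fixed) using Lemma~\ref{lemma:Leibniz's integral rule_variable limits_modified version} in place of Lemma~\ref{lemma:Leibniz's integral rule_variable limits}, and then differentiate the outcome with respect to $y$, invoking Lemma~\ref{lemma:Leibniz's integral rule_constant limits_modified version} in place of Lemma~\ref{lemma:Leibniz's integral rule_constant limits} on the remaining integral. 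The hypotheses are tailored to feed these lemmas: assumption~1) is exactly the integrability needed for the $z$-step; assumption~2) supplies, for each fixed $y$, the dominating function $\vartheta(x)=\varphi(x,y)$ of $\pd{}{z} h$ required by Lemma~\ref{lemma:Leibniz's integral rule_variable limits_modified version} and simultaneously certifies that $\pd{}{z} h(\cdot,y,z)$ is integrable; and assumption~3) provides the dominating function $\phi(x)$ of $\md{}{2}{y}{}{z}{} h$ needed by Lemma~\ref{lemma:Leibniz's integral rule_constant limits_modified version} in the $y$-step.

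In the first step, fixing $y$ and applying Lemma~\ref{lemma:Leibniz's integral rule_variable limits_modified version} (extended to three variables, with $t \mapsto z$) gives, for almost every $z$,
\[
\dpd{}{z} \Phi(y,z) \mathop{=}^{\text{a.e.}} \eval[2]{h(x,y,z)}_{x=b(z)} \dod{}{z} b(z) - \eval[2]{h(x,y,z)}_{x=a(z)} \dod{}{z} a(z) + \int_{a(z)}^{b(z)} \dpd{}{z} h(x,y,z)\,\dif x \eqqcolon \Psi(y,z) .
\]
In the second step I would differentiate $\Psi$ with respect to $y$: the two boundary terms differentiate directly because $\pd{}{y} h$ exists on $\mathcal{S}$ by hypothesis, while the integral term is handled by Lemma~\ref{lemma:Leibniz's integral rule_constant limits_modified version} applied with parameter $t \mapsto y$ and $z$ fixed, whose hypotheses hold since $\pd{}{z} h(\cdot,y,z)$ is integrable (assumption~2) and $\md{}{2}{y}{}{z}{} h$ is dominated by the integrable $\phi(x)$ (assumption~3). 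Crucially, Lemma~\ref{lemma:Leibniz's integral rule_constant limits_modified version} delivers its identity for \emph{every} admissible value of its parameter, so $\pd{}{y}\Psi$ equals the right-hand side of \eqref{equation:Partial Differentiation Under the Integral Sign} for all $(y,z)$, introducing no new exceptional set.

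The main obstacle is the almost-everywhere bookkeeping between the two differentiations. Lemma~\ref{lemma:Leibniz's integral rule_variable limits_modified version} yields the $z$-derivative identity only off a $y$-dependent null set of $z$'s, so $\pd{}{z}\Phi \mathop{=}^{\text{a.e.}} \Psi$ holds merely almost everywhere, and differentiating such an identity in $y$ is not automatically legitimate. I would control this by a Tonelli argument: the union over $y$ of the individual $z$-null sets is Lebesgue-null in $\mathcal{Y}\times\mathcal{Z}$, so the only exceptional set is inherited from the $z$-step and \eqref{equation:Partial Differentiation Under the Integral Sign} holds off it, i.e.\ for almost all $(y,z)$. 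The genuinely delicate point — that $\pd{}{y}$ and the merely a.e.-defined $\pd{}{z}\Phi$ really compose to the mixed partial — is where I would argue most carefully, e.g.\ by realising both $\pd{}{z}\Phi$ and $\Psi$ as $y$-absolutely-continuous functions in the Lebesgue fundamental-theorem-of-calculus sense, so that their $y$-derivatives coincide almost everywhere. The remaining checks, namely that the two-variable modified lemmas extend harmlessly to the present three-variable setting and that $\varphi(\cdot,y)$ and $\phi$ genuinely dominate on the compact rectangles, are immediate from the stated bounds and the finiteness in assumptions~1)--3).
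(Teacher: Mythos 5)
Your proof follows essentially the same route as the paper's: the paper's proof is a one-paragraph substitution of Lemma~\ref{lemma:Leibniz's integral rule_constant limits_modified version} and Lemma~\ref{lemma:Leibniz's integral rule_variable limits_modified version} into the two-step argument of Proposition~\ref{proposition:Partial Differentiation Under the Integral Sign}, together with the observation that assumption~2) supplies the integrability of $\pd{}{z} h$ needed for the final application of Lemma~\ref{lemma:Leibniz's integral rule_constant limits_modified version}. Your handling of the almost-everywhere bookkeeping between the two differentiation steps is in fact more explicit than the paper's, which disposes of that issue with the single remark that the two equations with derivatives are now valid almost everywhere.
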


\begin{proof}
In the proof of Proposition~\ref{proposition:Partial Differentiation Under the Integral Sign}, we make the following substitutions: Lemma~\ref{lemma:Leibniz's integral rule_constant limits} $\mapsto$ Lemma~\ref{lemma:Leibniz's integral rule_constant limits_modified version} and Lemma~\ref{lemma:Leibniz's integral rule_variable limits} $\mapsto$ Lemma~\ref{lemma:Leibniz's integral rule_variable limits_modified version}, while the two equations with derivatives are valid almost everywhere now. Observe that the second assumption implies that $\int_{a(z)}^{b(z)} \left| \pd{}{z} h(x,y,z) \right| \dif x < \infty$, for all $(y,z) \in \mathcal{Y} \times \mathcal{Z}$, which is required to apply Lemma~\ref{lemma:Leibniz's integral rule_constant limits_modified version} in the final step.   
\end{proof}

\section{Dirac Delta Function:~Definition and Properties}
\label{appendix:Dirac Delta Function}

Loosely speaking, the Dirac delta function/distribution $\delta(x)$ can be regarded as a function defined on $\R$ that is zero everywhere except at $x=0$, where it is infinite, and also satisfies the following properties: $\delta(-x) = \delta(x) \geq 0$, $\int_{-\infty}^{+\infty} {\delta(t) \dif t} = \int_{-\varepsilon}^{+\varepsilon} {\delta(t) \dif t} = 1$, $\int_{-\infty}^{+\infty} {f(t) \delta(x-t) \dif t} = \int_{x-\varepsilon}^{x+\varepsilon} {f(t) \delta(x-t) \dif t} = f(x)$, and $f(x) \delta(x-y) = f(y) \delta(x-y)$ for all $x,y \in \R$, $\varepsilon \in \R_+$ and for all functions $f(x)$ that are \emph{bounded and piecewise continuous on $\R$}. Note that $\delta(x)$ is not a function in the conventional sense, because no function has these properties. Nevertheless, the Dirac delta function can be rigorously defined as a generalized function or distribution. Strictly speaking, the Dirac delta function $\delta(x)$ is the limit (in the sense of distributions) of a Dirac sequence $\{ \delta_k (x) \}_{k \in \N}$, that is, $\delta(x) = \lim_{k \to \infty} \delta_k(x)$. 

\begin{definition}[{\cite[p.~284]{Lang}}]
A \emph{Dirac sequence} is a sequence of functions $\{ \delta_k (x) \}_{k \in \N}$ that satisfy three properties: P.1) $\delta_k(-x) = \delta_k(x) \geq 0$ for all $k \in \N$ and for all $x \in \R$, \linebreak P.2) $\delta_k (x)$ is continuous on $\R$ and $\int_{-\infty}^{+\infty} {\delta_k(x) \dif x} = 1$ for all $k \in \N$, and P.3) for any $\epsilon, \zeta > 0$, there exists $K \in \N$ such that $\int_{-\infty}^{-\zeta} {\delta_k(x) \dif x} + \int_{\zeta}^{+\infty} {\delta_k(x) \dif x} < \epsilon$ for all integers $k \geq K$.
\end{definition}

\noindent In simple words, P.3 means that the area under the curve $y = \delta_k(x)$ is concentrated around $x=0$ for sufficiently large $k$. For example, the sequence of zero-mean normal/Gaussian distributions $\delta_k(x) = \frac{k}{\sqrt{\pi}} e^{-(kx)^2}$ is a Dirac sequence. 

The following proposition is an important result in analysis about the convolution of a function with a Dirac sequence; the convolution of two functions $f(x)$ and $g(x)$ is defined by $f*g \defeq \int_{-\infty}^{+\infty} {f(t) g(x-t) \dif t}$ and is commutative, that is, $f*g = g*f$.

\begin{proposition}[{\cite[p.~285, Theorem~1.1]{Lang}}]
Suppose that $f(x)$ is a bounded and piecewise-continuous function on $\R$, $\mathcal{D}$ is a compact subset of $\R$ on which $f(x)$ is continuous, and $\{ \delta_k (x) \}_{k \in \N}$ is a Dirac sequence. Then the sequence of functions $\{ f_k (x) \}_{k \in \N}$, where $f_k (x) \defeq f*\delta_k = \int_{-\infty}^{+\infty} {f(t) \delta_k(x-t) \dif t}$, converges to $f(x)$ uniformly on $\mathcal{D}$.
\end{proposition}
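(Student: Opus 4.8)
The plan is to treat $\{\delta_k\}_{k\in\N}$ as an approximate identity and to estimate the difference $f_k(x)-f(x)$ directly. First I would use the commutativity of the convolution, $f*\delta_k = \delta_k*f$, to write $f_k(x) = \int_{-\infty}^{+\infty} \delta_k(t)\,f(x-t)\dif t$, and I would record the normalisation $\int_{-\infty}^{+\infty} \delta_k(x-t)\dif t = \int_{-\infty}^{+\infty}\delta_k(u)\dif u = 1$ (by the substitution $u = x-t$ and property P.2). Combining these gives the key identity
\[
f_k(x) - f(x) = \int_{-\infty}^{+\infty} \delta_k(t)\,[f(x-t)-f(x)]\dif t , \qquad x \in \mathcal{D},
\]
so that, using $\delta_k(t) \geq 0$ from property P.1, we have the pointwise bound $|f_k(x)-f(x)| \leq \int_{-\infty}^{+\infty} \delta_k(t)\,|f(x-t)-f(x)|\dif t$.

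Next, I would fix $\eps > 0$ and set $M \defeq \sup_{x\in\R}|f(x)| < \infty$, which is finite because $f$ is bounded. I would split the last integral at a threshold $|t| = \zeta$ (to be chosen) into an \emph{inner} part over $|t| < \zeta$ and an \emph{outer} part over $|t| \geq \zeta$. For the outer part, the crude bound $|f(x-t)-f(x)| \leq 2M$ together with property P.3 of the Dirac sequence supplies an index $K\in\N$ such that $\int_{|t|\geq\zeta}\delta_k(t)\dif t < \eps/(4M)$ for all $k \geq K$; hence the outer contribution is at most $2M\cdot\eps/(4M) = \eps/2$, uniformly in $x$. For the inner part, if I can guarantee $|f(x-t)-f(x)| < \eps/2$ whenever $x\in\mathcal{D}$ and $|t| < \zeta$, then this contribution is at most $(\eps/2)\int_{-\infty}^{+\infty}\delta_k(t)\dif t = \eps/2$ by P.2. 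Adding the two estimates yields $|f_k(x)-f(x)| < \eps$ for every $x\in\mathcal{D}$ and every $k\geq K$, which is precisely uniform convergence on $\mathcal{D}$.

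The hard part will be the inner estimate, namely producing a single $\zeta > 0$ that works simultaneously for all $x\in\mathcal{D}$, since $f$ is only assumed piecewise continuous off $\mathcal{D}$ and the shifted argument $x-t$ need not lie in $\mathcal{D}$. I would establish the required uniform modulus of continuity along $\mathcal{D}$ by a compactness (sequential) argument: assuming it fails, there would exist $\eps' > 0$ and sequences $x_n\in\mathcal{D}$, $t_n\to 0$ with $|f(x_n-t_n)-f(x_n)| \geq \eps'$; by compactness of $\mathcal{D}$ a subsequence satisfies $x_n\to x^\ast\in\mathcal{D}$, whence $x_n - t_n \to x^\ast$ as well, and the continuity of $f$ at the point $x^\ast\in\mathcal{D}$ forces both $f(x_n-t_n)\to f(x^\ast)$ and $f(x_n)\to f(x^\ast)$, contradicting $|f(x_n-t_n)-f(x_n)|\geq\eps'$. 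This furnishes the desired $\zeta$ and closes the argument. Note that only the continuity of $f$ at points of $\mathcal{D}$, viewed within $\R$, is actually needed here; the ambient piecewise continuity merely guarantees the boundedness and integrability that make every integral above well defined.
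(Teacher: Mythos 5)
The paper does not prove this proposition at all---it is quoted directly from Lang as a known result---so there is no in-paper argument to compare against; your proof is the standard approximate-identity argument (essentially Lang's own) and is correct: the identity $f_k(x)-f(x)=\int \delta_k(t)\,[f(x-t)-f(x)]\dif t$, the split at $|t|=\zeta$ using P.1--P.3 and the bound $2M$ for the tail, and the sequential-compactness derivation of a uniform modulus of continuity along $\mathcal{D}$ all go through. The one point worth keeping explicit is the one you already flag: the compactness step requires $f$ to be continuous at each point of $\mathcal{D}$ \emph{as a function on $\R$}, not merely that the restriction $f|_{\mathcal{D}}$ is continuous, since $x_n-t_n$ may lie outside $\mathcal{D}$; this ambient reading is forced, because under the restricted reading the statement is false (take $f$ the indicator of $[0,1]$ and $\mathcal{D}=[0,1]$, where $f*\delta_k(0)\to 1/2\neq f(0)$).
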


Therefore, if $f(x)$ is bounded and piecewise continuous on $\R$, then \linebreak $\int_{-\infty}^{+\infty} {f(t) \delta(x-t) \dif t} = f(x)$ rigorously means that: for each Dirac sequence $\{ \delta_k (x) \}_{k \in \N}$, $\lim_{k \to \infty} {\int_{-\infty}^{+\infty} {f(t) \delta_k(x-t) \dif t}} = f(x)$ uniformly on every compact subset of $\R$ where $f(x)$ is continuous.

\section*{Acknowledgments}
The author thanks the Editor-in-Chief for handling the review process of the paper, and the anonymous referees for their useful comments and suggestions.  


\end{document}